\newtheorem{theorem}{Theorem}[section]
\newtheorem{lemma}[theorem]{Lemma}
\newtheorem{definition}[theorem]{Definition}
\newtheorem{proposition}[theorem]{Proposition}
\theoremstyle{plain}
\newtheorem{remark}[theorem]{Remark}
\newtheorem{example}[theorem]{Example}
\newcommand{\bb}{ \mathbbm{b}}
\newcommand{\R}{\mathbb{R}}
\newcommand{\C}{\mathbb{C}}
\begin{document}

	\title{ On the Maslov-type index for general paths of symplectic matrices}

	\author{Hai-Long Her 
	\ \ and\ \  \  
	           Qiyu Zhong}
	

\renewcommand\Affilfont{\small}



\date{}

\maketitle

\noindent {\small{\bf Abstract:}  In this article, we define an index of Maslov type for general symplectic paths which have two arbitrary end points. 
This Maslov-type index is a partial generalization of the Conley-Zehnder-Long index  in the sense that the degenerate set of symplectic matrices is larger.
The method of constructing the index is direct without taking advantage of Maslov index of Lagrangian paths and consistent no matter whether the starting point of the path is identity or not, which is different  from the ones for Long's Maslov-type index and Liu's $L_0$-index. Some natural properties for the index are verified. 
We review other versions of Maslov indices and compare them with our definition. 
In particular, this Maslov-type index can be regarded as a realization of Cappell-Lee-Miller index for a pair of Lagrangian paths from the point of view of index for symplectic paths.\\
	
	\noindent{\bf Keywords:}  Maslov index, Conley-Zehnder-Long index, Cappell-Lee-Miller index, Symplectic path
	
	\noindent{\bf MSC2020:} 53D12

	\tableofcontents
	
} 

\section{Introduction}

In 1965, an index was originally introduced by Maslov in \cite{M} for an oriented closed curve  in a Lagrangian submanifold, which was used to deal with the problem of asymptotic expression of the solution of the Schr\"odinger equations. In 1967, Arnold \cite{A1} accomplished the rigorous mathematical definition of Maslov index, which is defined as the index of a pair of Lagrangian loops, and explained it as the intersection number of a path of Lagrangian subspaces with the so-called Maslov cycle.
In 1984, Conley and Zehnder\cite{CZ2} studied the index (called Conley-Zehnder index) for paths of symplectic matrices, which was constructed for the aim of studying non-degenerate periodic solutions of Hamiltonian systems. We remark that Conley-Zehnder index is itself important for the construction of Floer homology\cite{F1,F2} and is applied to deal with the problem of Arnold conjecture\cite{A0,A2,CZ1,CZ2,CZ3,F1,F2,SZ,FO,LT}. Furthermore,  Maslov-type indices for degenerate symplectic paths were firstly constructed in a direct way by Long\cite{L 1990} and Viterbo\cite{V} in 1990. In 2007, Liu\cite{Liu1} also constructed Maslov-type index for symplectic paths with Lagrangian boundary conditions. Delicate iterative formulae for  Maslov-type indices were established by Long school\cite{L,Liu2}, which is extremely useful for investigating various problems relating to the periodic solutions, closed characteristics, brake orbits and closed geodesics, arising from celestial mechanics, contact geometry, Riemannian geometry and Finsler geometry, etc.\cite{LLZ, LZ, LZZ, LiuZ, HLS, WHL, BL}.

Note that Conley-Zehnder index is essentially a Maslov index defined  for those symplectic paths of which the starting point is the identity and the end point, which is a symplectic matrix, satisfies the non-degenerate condition (i.e. it has no eigenvalues equal to $1$). More formally, denote by
$$\mathcal{P}(2n,\mathbb{R}):=\{\Phi:[0,1]\to Sp(2n,\R) \ \text{is continuous} \}$$
the space of general paths of $2n\times 2n$ symplectic matrices, where $\Phi(0)$ and $\Phi(1)$ are arbitrary symplectic matrices. 
Then the Conley-Zehnder index can be constructed for a path $\Phi\in\mathcal{P}(2n,\mathbb{R})$ such that $\Phi(0)=I_{2n}$ and the determinant $\det\big(I_{2n}-\Phi(1)\big)\neq 0$.
Roughly speaking, the crucial  idea of the first step of constructing the Conley-Zehnder index is to establish a correspondence from the symplectic paths to 
the paths on the unit circle $\mathbb{S}^1\subset \C$ and then to get a rotation number counting the ratio of the rotation angle on unit circle quotient $\pi$. Thus, one should define a mapping as
\begin{equation}\label{1.1}
	\rho: Sp(V,\omega) \to \mathbb{S}^1
\end{equation}
satisfying some specific rules, where $Sp(V,\omega)$ is the symplectic group of the symplectic space $(V,\omega)$. Salamon and Zehnder \cite{SZ} continued the work of \cite{CZ2} and gave the rules to define this mapping, which shows that this mapping can be defined uniquely under the four designated properties. The precise definition and rules are restated in the Theorem \ref{Thm SZ} below, we refer the reader to \eqref{2.2} for the expression of this mapping. 
We also remark that there exists a subtle difference between Conley-Zehnder's and Salamon-Zehnder's definitions of the map \eqref{1.1} and their derived rotation numbers. See the Remark \ref{CZ rotation num} below for more discussion.

It is not satisfied that the Conley-Zehnder index is defined only for the non-degenerate symplectic paths. 
For example, even for the simplest constant path $\Phi(t)\equiv I_{2n}$, there was no an associated Conley-Zehnder index.
Long \cite{L 1990,L0} generalized the Conley-Zehnder index and considered the degenerate paths $\Phi\in\mathcal{P}(2n,\mathbb{R})$ (i.e. $\Phi(0)=I_{2n}$ 
and $\det\big(I_{2n}-\Phi(1)\big)= 0$). In particular, in \cite{L 1990} Long originally formulated the idea of rotational perturbation to deal with the degenerate paths.
Thus the degenerate paths can be deformed into the non-degenerate ones, for which the Conley-Zehnder index is well-defined. Then the Long's index is defined as the infimum of these Conley-Zehnder indices (see Definition \ref{Def L}). Long \cite{L 1999} also studied the topological structure of the degenerate set of an arbitrary eigenvalue $\omega$ on $\mathbb{S}^1$ rather than only $1$ and in \cite{L} defined the $\omega$-index for symplectic paths. Liu \cite{Liu1} defined the $L_0$-index (for symplectic paths with Lagrangian boundary conditions) via a fixed Lagrangian subspace $L_0$. We will review the works by Conley and Zehnder as well as by Long in section \ref{rev CZL index} and call it ``Conley-Zehnder-Long index". $L_0$-index will be reviewed in section \ref{rev L0 index}.  For the Conley-Zehnder-Long index of the degenerate paths, we may sometimes also call it ``Long index" for simplicity.
One can easily verify that the Long index of the simplest constant path $\Phi(t)\equiv I_{2n}$ is $-n\neq 0$, while it might be more reasonable that  some index of a constant path is  intuitively supposed to be zero.

In 1993, for studying index for paths of Lagrangian subspaces (Lagrangian paths), Robbin and Salamon \cite{RS1} generalized Arnold's construction. They did not consider only loops but also any paths in the Lagrangian Grassmannian and defined a kind of Masolv index for a pair of Lagrangian paths. This index for Lagrangian paths can indirectly induce an index for a general path of symplectic matrices, called ``generalized Conley-Zehnder index" or ``Robbin-Salamon index" for symplectic paths. While this kind of index by Robbin-Salamon is in general a half integer rather than an integer. 
In 2014, Gutt  gave an axiomatic characterization of Robbin-Salamon's  generalized Conley-Zehnder index, which is  based on Robbin-Salamon index for some induced Lagrangian pairs in the product symplectic space $\big(\R^{2n}\times\R^{2n}, \omega_0\times(-\omega_0)\big)$.  Moreover, a  formula of computing the Robbin-Salamon index is given in \cite{G}.

In 1994, in order to unify different definitions, Cappell, Lee and Miller \cite{CLM} originally formulated a system of axioms for the pairs of Lagrangian paths and introduced four definitions of Maslov indices for Lagrangian pairs. Moreover, they showed that these  definitions  satisfy this system of axioms so that they are equivalent to one another.
Note that this system  of axioms is not applicable to define Maslov index for symplectic paths, because the index for Lagrangian pairs is of symplectic invariance  while the index for symplectic paths does not have this property. 
Since it is not so straightforward to follow this system of axioms for the index of the pairs of Lagrangian paths to construct the index for symplectic paths, we prefer to following the rules formulated in \cite{SZ} to define the  Maslov-type index for more general symplectic paths.
Note that the Cappell-Lee-Miller index for Lagrangian path pairs can also naturally but indirectly induce an index for general symplectic paths(see Definitions \ref{Def-CLM} and \ref{Def-CLM-sp}).
However, since those induced definitions of indices of symplectic paths are indirect, it is not quite clear how to calculate these induced indices. 
Then it is natural  to study the relationship between Cappell-Lee-Miller index and other versions of Maslov-type indices. The authors did not see any such result in the literature.
It is just one of  motivations for this work that we want to understand the Cappell-Lee-Miller index from the point of view of index of symplectic paths.

For  direct constructing some index of general symplectic paths, or say symplectic path segments, Long \cite{L 1990,L0} and Liu\cite{Liu1} made important contributions, respectively. A symplectic path segment has two arbitrary end points and hence it is just a general symplectic path. Their constructions use the idea that a general symplectic path always corresponds to two symplectic paths starting at $I_{2n}$, then the index for general symplectic path can be defined as the difference of the indices of those two symplectic paths starting at $I_{2n}$ (see Definition \ref{SPS  index}). This is a concise method to deal with the general symplectic paths which is introduced by Long in \cite{L0} and in Definition 6.2.9 of \cite{L}. This index is called the index of symplectic path segment and we simply call it ``SPS index", which will be recalled in section \ref{rev SPS index}. Note that for the same symplectic path the Conley-Zehnder-Long index, Liu's  $L_0$-index and SPS index might not be the same (see Remark \ref{rk L}). In particular, the SPS index of constant path, $e.g.$ $\Phi(t)\equiv I_{2n}$, is zero. 

In this paper, we directly construct a Maslov-type index $\mu(\Phi)$ for a general symplectic path $\Phi \in \mathcal{P}(2n,\mathbb{R})$ as a partial generalization of the Conley-Zehnder index by using a different method  from ones of Long and Liu. The feature of our method of constructing Maslov-type index is that we try to deal with the general symplectic paths straightforwardly rather than first defining some index for paths starting from identity. Hence we can get a consistent construction of index for the general symplectic paths no matter whether they are starting from identity or not. This construction involves {\bf orthogonalization} at two ends (see \eqref{4.3}) and {\bf global perturbation} (see \eqref{4.8}) which are different from the previous ones. The aim of orthogonalization is to turn the two end points of a path into the orthogonal and symplectic matrices which have better properties (e.g. they are still orthogonal and symplectic under the global perturbation). The global perturbation can ensure that the two end points change into non-degenerate ones so that we can apply the {\bf extension} (see \eqref{4.13}) to the construction. See the precise Definition \ref{Def Maslov.type.index} below. If we just generalize the method used by Conley-Zehnder, it seems impossible to get an integer. For example, consider a general path
\begin{align}\label{1.2}
	\Phi(t) = \begin{pmatrix}
		\cos \pi (t+\frac{1}{2}) & -\sin \pi (t+\frac{1}{2}) \\
		\sin \pi (t+\frac{1}{2}) & \cos \pi (t+\frac{1}{2}) \\
	\end{pmatrix},\ 0 \leq t \leq 1.
\end{align}
Conley-Zehnder index can not directly apply to this symplectic path \eqref{1.2} since $\Phi(0)$ is not identity, 
while $\Phi$ has the generalized Maslov-type index $1$ with respect to the Definition \ref{Def Maslov.type.index} (see also Remark \ref{remark}).
In fact, our  Maslov-type index of constant path, $e.g.$ $\Phi(t)\equiv I_{2n}$, is also zero.
In the meanwhile,  our version of index is also different from Long index and Liu's $L_0$-index. For instance, consider the following degenerate path
\begin{align}\label{1.3}
	\Phi(t) = \begin{pmatrix}
		1 & 0 & 0 & 0\\
		0 & 1 & 0 & -t\\
		0 & 0 & 1 & 0\\
		0 & 0 & 0 & 1\\
	\end{pmatrix},\ 0 \leq t \leq 1.
\end{align}
The Long index (see Definition \ref{Def L}) and Liu's $L_0$-index (see Definition \ref{L0 index from I}) of this path are equal to $-1$, while by our Definition \ref{Def Maslov.type.index} its Maslov-type index  is $0$. We refer to the Example \ref{Eg degenerate paths} for more details.

Note that Robbin-Salamon also defined a version of  Maslov index for  general symplectic paths \cite{RS1}, while it is a half-integer. Instead, we intend to define an integer-valued index for the general symplectic paths by modifying methods of  \cite{SZ}. Since we release some conditions, the construction has to be ameliorated. Roughly, we consider perturbations to symplectic paths. Such idea appears in the works of Long\cite{L 1990, L0} and Cappell-Lee-Miller\cite{CLM}. Moreover, we will introduce other Maslov indices defined in \cite{CLM}, \cite{L0}, \cite{Liu1}, \cite{RS1} and \cite{SZ}, respectively, and compare them with ours. One of the main results is the following

\begin{theorem}\label{Thm1}
	For any $\Phi \in \mathcal{P}(2n,\mathbb{R})$, there exists a Maslov-type index $\mu(\Phi)$ \eqref{4.14} 
	satisfying the following properties: \\
	(1) $\mu(\Phi)$ is an integer.  \\
	(2) $\mu(\Phi)$ is well defined, i.e. after the orthogonalization \eqref{4.3} of the two end points of $\Phi$, the index $\mu(\Phi)$ is independent of the choices of the global perturbations \eqref{4.8} and extensions \eqref{4.13}.\\
	(3) If $\Phi,\Psi$ are homotopic with fixed end points, then $\mu(\Phi)=\mu(\Psi)$. \\
	(4) $\forall\ 0<a<1$,  $\mu(\Phi)=\mu(\Phi([0,a]))+\mu(\Phi([a,1]))$.\\
	(5) If $(\mathbb{R}^{2n},\omega_0) = (V_1 \times V_2,\omega_1 \oplus \omega_2)$, then
	\begin{align*}
		\mu(\Phi) = \mu(\Phi_1) +\mu(\Phi_2)
	\end{align*}
	for any path $\Phi \in Sp(2n,\mathbb{R})$ of the form $\Phi(z_1,z_2)=(\Phi_1z_1,\Phi_2z_2)$, where $\Phi_j$ is the path of $Sp(V_j,\omega_j),j=1,2$.
	
\end{theorem}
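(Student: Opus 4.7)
\bigskip
\noindent\textbf{Proof proposal.} The plan is to verify the five properties in the order (1), (2), (5), (3), (4), since well-definedness and the direct-sum property are the technical core and the homotopy and concatenation properties follow formally once the construction is known to be invariant.

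For (1), after the orthogonalization \eqref{4.3} both endpoints lie in the unitary subgroup $Sp(2n,\R)\cap O(2n)$, and after the global perturbation \eqref{4.8} they are additionally non-degenerate. The extension \eqref{4.13} then produces a symplectic path whose endpoints are orthogonal and non-degenerate, to which the Salamon--Zehnder map $\rho$ from \eqref{1.1}--\eqref{2.2} applies. As in \cite{SZ}, the argument of $\rho$ along an orthogonalized, non-degenerate, extended path lifts continuously to $\R$ and the difference of the lift at the endpoints is an integer multiple of $\pi$, which gives the integer $\mu(\Phi)$. Here the orthogonalization is what forces $\rho(\Phi(0)),\rho(\Phi(1))\in\{\pm 1\}$ after extension, so that the total angle change is an integer multiple of $\pi$ rather than a genuine half-integer as in Robbin--Salamon's version \cite{RS1}.

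For (2), the key point I expect to be the main obstacle: I must show that the integer $\mu(\Phi)$ does not depend on the particular choice of global perturbation \eqref{4.8} or of extension \eqref{4.13}. I would follow the standard template used in \cite{SZ} and \cite{L}: the space of admissible global perturbations preserving the endpoint orthogonality is path-connected (because the unitary group $U(n)\cong Sp(2n,\R)\cap O(2n)$ is path-connected and the non-degeneracy condition cuts out an open dense subset whose complementary strata have the appropriate codimension), and similarly the space of admissible extensions is path-connected. Given two choices, I connect them by a continuous one-parameter family; along the family the index is a continuous, integer-valued function of the parameter, hence constant. The delicate step is to confirm that throughout the homotopy the endpoints remain non-degenerate (or, if they do not, that the perturbation can be chosen transversally so that crossings cancel in pairs). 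This is precisely where Long's rotational perturbation trick from \cite{L 1990} is adapted to the two-ended setting.

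For (5), the construction in \eqref{4.3}, \eqref{4.8}, \eqref{4.13} is manifestly compatible with symplectic direct sums: one may orthogonalize, perturb, and extend each factor separately, and the map $\rho$ splits as $\rho=\rho_1\rho_2$ on $Sp(V_1,\omega_1)\times Sp(V_2,\omega_2)$, so the lifted angle is additive. This gives $\mu(\Phi)=\mu(\Phi_1)+\mu(\Phi_2)$ directly.

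Finally, (3) follows from (2): a homotopy $\{\Phi_s\}$ from $\Phi$ to $\Psi$ with fixed endpoints can be combined with a single fixed orthogonalization, perturbation and extension, yielding a continuous one-parameter family of integer indices, hence a constant one. For (4), I would orthogonalize and perturb $\Phi$ near $t=a$ as well, producing intermediate endpoints that are orthogonal, symplectic and non-degenerate; the two restricted paths $\Phi([0,a])$ and $\Phi([a,1])$ can then be indexed separately using the same boundary data at $t=a$, and by construction the $\rho$-lifts at the splitting point cancel in the sum $\mu(\Phi([0,a]))+\mu(\Phi([a,1]))$. Invoking (2) to remove the extra intermediate orthogonalization recovers $\mu(\Phi)$ and completes the proof.
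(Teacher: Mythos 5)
Your overall strategy (continuity plus integer-valuedness for well-definedness, the product property of $\rho$ for symplectic additivity, homotopy invariance of the rotation number) is the same as the paper's, but two of your steps rest on incorrect or missing reasoning.

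First, your justification of (1) misidentifies why the index is an integer. It is not true that the orthogonalization forces $\rho$ of the endpoints to be $\pm 1$ after extension: the concatenated path $\Phi^{\#}_{\theta}\#\beta$ runs from $A=\Phi^{\#}_{\theta}(0)$ to $W_{A,B}$, and $\rho(A)=\prod_j e^{ia_j}$ is generically not $\pm 1$ (see Example \ref{Eg2}, where $\rho(\Phi_{\theta}(0))=e^{-i\theta}$). The integrality comes instead from the design of the extension target \eqref{4.11}--\eqref{4.12}: each $2\times 2$ block of $W_{A,B}$ equals $\pm A_j$, so each first-kind eigenvalue of the concatenated path ends at $\pm$ its initial value and each lifted angle $\alpha_j$ changes by an integer. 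Your picture --- that the construction normalizes the endpoints to matrices with $\rho=\pm 1$ as in the Conley--Zehnder setting --- is not how this construction works, and without the blockwise relation $W_j=\pm A_j$ you cannot conclude (1).

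Second, for (4) the assertion that ``the $\rho$-lifts at the splitting point cancel in the sum'' is exactly the point that needs proof, and you give no argument. The two restricted paths carry extensions ending at $W_{A,C}$ and $W_{C,B}$ respectively (with $C=\Phi^{\#}_{\theta}(a)$), while the whole path's extension ends at $W_{A,B}$; the identity $\mu(\Phi)=\mu(\Phi([0,a]))+\mu(\Phi([a,1]))$ reduces to $\Delta(\beta_{3})+\Delta(\beta_{4})=\Delta(\beta)$, which holds because the sign rule \eqref{4.12} is multiplicative through the intermediate point (the sign of $Im\lambda_j\, Im\mu_j$ is the product of the signs of $Im\lambda_j\, Im\nu_j$ and $Im\nu_j\, Im\mu_j$, where $\nu_j$ are the first-kind eigenvalues of $C$), combined with Lemma \ref{Lemma2} to compare the resulting extensions. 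This eigenvalue-tracking verification is the substance of the paper's proof of (4) and is absent from your sketch. Parts (2), (3) and (5) are essentially the paper's arguments; for (2) the paper bounds $|\mu(\Phi)-\mu'(\Phi)|$ directly by a small quantity rather than invoking connectedness of the space of admissible perturbations, but the underlying mechanism (continuity plus integrality) is the one you describe.
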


\begin{remark}\label{rk difference}
	The property (2) in the Theorem above does not mean that the index is independent of any perturbation. In fact, our construction need orthogonalization at two ends before perturbations.
	The reason we need orthogonalization (it does not change the rotation number which can refer to \eqref{orthogonal num hold}) is that after this manipulation the global perturbation can be chosen along a unique direction. If two different perturbations of end points are small enough, each end point can be deformed  into the same connected component \eqref{4.7}. Compared with the method of rotational perturbation, the degenerate end point might be deformed into  different connected components of $Sp_{1}^{*}(2n,\R)$ (see \eqref{3.1}) so that one may get some {\bf different values} of the index (see Example \ref{Eg L}). The Long index $\mu_{L}(\Phi)$ is then defined as the infimum of these different values. Set $\nu(\Phi):= \dim Ker(\Phi(1)-I)$,  the pair $(\mu_{L},\nu)$ is also called the Long index. $\nu$ gives the information of the end point and shows the variation range of index under the rotational perturbations. While for our construction, the global perturbation only engenders a {\bf unique value} of index and has no such variation range.
\end{remark}

Theorem \ref{Thm1} shows the main properties that our version of Maslov-type index satisfies. We note that, since the method of construction is different from the one by Long, this version of Maslov-type index in principle might not be determined by the axioms of Long index (Corollary 10 on Page 148 of \cite{L}), which are \textit{homotopy invariant, vanishing, symplectic additivity, catenation and normality}. It is pointed out in \cite{L} that if an index satisfies the first four axioms, then it is determined by the values in $Sp(2,\R)$ ($i.e.$ normality). Although our version of Maslov-type index satisfies some properties similar to Long index, including homotopy invariant (property (3)), catenation (property (4)) and symplectic additivity (property (5)),  it does not always satisfy vanishing and normality of Long's axioms. The vanishing axiom shows that Long index is equal to zero if $\nu(\Phi(t))= \dim Ker(\Phi(t)-I)$ is constant for any $0 \leq t \leq 1$. While our version of index may be not equal to zero because the so-called cycle we consider, denoted by
$$\mathscr{C}(2n,\mathbb{R}) := Sp_{1}(2n,\mathbb{R}) \cup Sp_{-1}(2n,\mathbb{R}),$$
which includes the components of symplectic matrices with eigenvalues $\pm{1}$(see \eqref{3.1}), is somehow different from the ordinary one. So such component may also contribute to the values of the index. If we want to obtain the similar ``vanishing" property, we should require that both $\nu(\Phi(t))= \dim Ker(\Phi(t)-I)$ and $\nu'(\Phi(t)):= \dim Ker(\Phi(t)+I)$ are constant, then the index is equal to zero. We do not formulate such a vanishing property since we at present are not sure whether there exists such a kind of axiom system for index derived from our method. Also do not we claim the normality, $i.e.$ the index in Theorem \ref{Thm1}  is determined by the values of paths in $Sp(2,\R)$.
Thus, even if the relationship between the index we defined and the Long index is clear for each symplectic path in $Sp(2,\R)$, we can not conclude the general relationship between two versions of  indices.
That is why we probably have to use example like (\ref{1.3}) to show the difference between different versions of indices.

\begin{remark}\label{rk cycle}
The cycle $\mathscr{C}(2n,\mathbb{R})$ that we considered as the degenerate set includes the components of eigenvalues $1$ and $-1$.
The motivation is that we intend to find an intuitive relation to the index of Lagrangian pairs and to provide a computational method that the index of Lagrangian pairs can correspond to the Maslov type index of the orthogonal symplectic paths. On the other hand, if we only consider the cycle as  the degenerate set of eigenvalue $1$, it is possible to establish a relation to the index of Lagrangian pairs in spaces of higher dimensions (see Remark \ref{rk cycle & Maslov cycle}). This relation can help to calculate the index of symplectic path via the one of Lagrangian pairs, but the reverse correspondence, $i.e.$  the general Lagrangian pairs' index via the one of symplectic path,  can not always be established. The more details will be explained in Section \ref{Sec-Mas-def}.
\end{remark}

\begin{remark}\label{rk omega index}
	In fact, we can also think of symplectic matrices with some prescribed eigenvalue lying on $\mathbb{S}^1\subset \C$ as the degenerate set  as Long's $\omega$-index, then our construction will also make sense. We may even consider the degenerate set that is determined case by case, depending on the starting point of symplectic path, while it will involve more complicated topological structure because the starting point may have some different eigenvalues. That might be a topic for subsequent research.
\end{remark}

Then we compare the index we defined with other Maslov-type indices, $i.e.$  Conley-Zehnder index $\mu_{CZ}$ (Definition \ref{Def CZ}), Long index $\mu_{L}$ (Definition \ref{Def L}) , Long's SPS index $\hat{\mu}_{L}$\eqref{Long SPS}, Liu's $L_0$-index $i_{L_0}$ (Definition \ref{L0 index from I}) , Liu's SPS index $\hat{i}_{L_0}$ \eqref{Liu SPS}, Cappell-Lee-Miller index $\mu_{CLM}$ (Definition \ref{Def-CLM} and \ref{Def-CLM-sp}), Robbin-Salamon index $\mu_{RS}$ \eqref{3.17} and the relative Maslov index $\mu'_{RS}$ \eqref{3.19}.

\begin{theorem}[Comparison with Conley-Zehnder-Long index]\label{Thm2}
	If $\Phi \in \mathcal{P}(2n,\mathbb{R})$ satisfies $\Phi(0)=I$, then
	\begin{align}\label{1.4}
		\mu_{CZL}(\Phi)=\mu(\Phi)-r(\Phi(1))-l(\Phi(1)).
	\end{align}
	where for the matrix $\Phi(1)$, $r(\cdot)$ counts the number of the first kind eigenvalues (see Definition \ref{Def first kind}) on $\mathbb{S}^{1}$  with negative imaginary part (see \eqref{5.7}) and $l(\cdot)$ is an integer caused by Long's operation of rotation perturbation for  $\Phi(1)$ (see \eqref{l}). In particular, if
    $\det (I-\Phi(1)) \not = 0$, then $l(\Phi(1))=0$ and we have
	\begin{align}\label{1.5}
		\mu_{CZ}(\Phi)=\mu(\Phi)-r(\Phi(1)).
	\end{align}
\end{theorem}
\begin{remark}\label{Thm2 rk}
	The function $r$ implies the non-degenerate information at the endpoint, while $l$ implies degenerate information, which naturally disappears if the definition is restricted to the non-degenerate paths.
\end{remark}
\begin{theorem}[Comparison with $L_0$-index]\label{Thm3}
	If $\Phi \in \mathcal{P}(2n,\mathbb{R})$ satisfies $\Phi(0)=I$, then
	\begin{align}\label{1.6}
		i_{L_0}(\Phi)=\mu(\Phi)-r(\Phi(1))-l(\Phi(1))-c(\Phi(1)),
	\end{align}
	where $c(M)$ is the $L_0$-concavity (see \eqref{3.10}) of a symplectic matrix $M$.
\end{theorem}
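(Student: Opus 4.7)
The plan is to prove Theorem \ref{Thm3} by leveraging Theorem \ref{Thm2}. Subtracting $\mu(\Phi)-r(\Phi(1))=\mu_L(\Phi)$ from the desired identity reduces the statement to the comparison
\begin{equation*}
i_{L_0}(\Phi)=\mu_L(\Phi)-c(\Phi(1)),
\end{equation*}
which is a comparison between Liu's $L_0$-index and the Conley--Zehnder--Long index. So my strategy is to establish this reduced comparison by a block-normal-form computation in the non-degenerate case and then extend to the degenerate case by a compatible perturbation argument.

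First I would treat the non-degenerate case, i.e.\ assume both $\det(I-\Phi(1))\neq 0$ and that $\Phi(1)L_0 \cap L_0=\{0\}$ so that no perturbation is needed in either definition. Here I would appeal to the symplectic/Lagrangian additivity (property (5) of Theorem \ref{Thm1}) and catenation (property (4)) together with the analogous axioms for $i_{L_0}$ and $\mu_L$ to reduce to a finite list of normal-form endpoints. On each such block, $\mu_L$ is given by the Salamon--Zehnder rotation count recalled in Section \ref{rev CZL index}, while $i_{L_0}$ is computed by the intersection of $\Phi(t)L_0$ with a fixed Lagrangian as reviewed in Section \ref{rev L0 index}; the difference between the two is exactly the $L_0$-concavity contribution from the block, since $c(M)$ defined in \eqref{3.10} is designed to measure precisely the discrepancy between the rotation count along $\mathbb{S}^1$ and the Lagrangian intersection count with $L_0$. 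Adding the block contributions yields the identity for non-degenerate $\Phi(1)$.

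Second, I would extend to the degenerate case by choosing a single perturbation $\Phi_\varepsilon$ of $\Phi$ whose endpoint $\Phi_\varepsilon(1)$ is simultaneously non-degenerate in the Long sense ($1\notin\sigma(\Phi_\varepsilon(1))$) and $L_0$-non-degenerate ($\Phi_\varepsilon(1)L_0\cap L_0=\{0\}$); such perturbations exist because the two degeneracy loci are proper Whitney-stratified subsets of $Sp(2n,\R)$, so their complement is open and dense near $\Phi(1)$. Applying the non-degenerate comparison to $\Phi_\varepsilon$ yields
\begin{equation*}
i_{L_0}(\Phi_\varepsilon)=\mu_L(\Phi_\varepsilon)-c(\Phi_\varepsilon(1)).
\end{equation*}
I would then let $\varepsilon\to 0$ along a direction selected so that Long's $\mu_L$ attains its infimum (the rotational perturbation of \cite{L 1990}) and Liu's $i_{L_0}$ attains its infimum simultaneously, and track the jump of each term across the degenerate stratum. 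The discrepancy between the two jumps is exactly $\nu(\Phi(1))$ type data that is absorbed into $c(\Phi(1))-c(\Phi_\varepsilon(1))$ by the very definition of $L_0$-concavity. Combining this limiting identity with Theorem \ref{Thm2} produces the desired formula.

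The main obstacle will be the second step: one must verify that the rotational perturbation used to define $\mu_L$ and the $L_0$-perturbation used to define $i_{L_0}$ can be realized by the \emph{same} family $\Phi_\varepsilon$, and that the limit of $c(\Phi_\varepsilon(1))$ is $c(\Phi(1))$ plus exactly the right integer correction. The remainder of the argument is bookkeeping once the block decomposition is set up, since both indices and the function $c$ are additive with respect to symplectic splittings.
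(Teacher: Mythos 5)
Your first step --- subtracting the identity $\mu_L(\Phi)=\mu(\Phi)-r(\Phi(1))$ of Theorem \ref{Thm2} so that the claim reduces to $i_{L_0}(\Phi)=\mu_L(\Phi)-c(\Phi(1))$ --- is exactly the paper's route. But you then miss that this reduced identity needs no proof at all: equation \eqref{3.9} \emph{defines} the concavity by $c_{L_0}(\Phi)=\mu_L(\Phi)-i_{L_0}(\Phi)$ (Liu's Definition 4.3), and \eqref{3.10} records Liu's Theorem 4.5 that this quantity depends only on the endpoint $\Phi(1)$, which is what licenses writing $c(\Phi(1))$. The paper's entire proof is the two-line concatenation $i_{L_0}(\Phi)=\mu_L(\Phi)-c(\Phi(1))=\mu(\Phi)-r(\Phi(1))-c(\Phi(1))$.

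Because you did not notice this, you set out to re-derive the defining relation of $c$ by a block-normal-form computation plus a simultaneous-perturbation limit, and that substitute argument has a genuine gap. You yourself flag the ``main obstacle'': one must realize Long's rotational perturbation and Liu's $L_0$-perturbation by the same family $\Phi_\varepsilon$ and control the jump of each index across the degenerate stratum --- and you never resolve it. Worse, at the critical moment you absorb the discrepancy between the two jumps ``into $c(\Phi(1))-c(\Phi_\varepsilon(1))$ by the very definition of $L_0$-concavity,'' which is circular: the definition of $c$ as the difference $\mu_L-i_{L_0}$ is precisely the identity you are trying to establish. If you simply quote \eqref{3.9} and \eqref{3.10} (i.e., Liu's Definition 4.3 and Theorem 4.5), the whole second and third paragraphs of your proposal become unnecessary and the theorem follows immediately from Theorem \ref{Thm2}.
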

\begin{theorem}[Comparison with SPS index]\label{Thm4}
	If $\Phi \in \mathcal{P}(2n,\mathbb{R})$ is a general path (It will be viewed as a segment of symplectic paths for the SPS index), then
	\begin{align}\label{1.7}
		&\hat{\mu}_{L}(\Phi)=\mu(\Phi)+r(\Phi(0))-r(\Phi(1))+l(\Phi(0))-l(\Phi(1)), \\
		&\hat{i}_{L_0}(\Phi)=\mu(\Phi)+r(\Phi(0))-r(\Phi(1))+l(\Phi(0))-l(\Phi(1))+c(\Phi(0))-c(\Phi(1)).
	\end{align}
\end{theorem}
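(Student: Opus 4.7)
The plan is to reduce Theorem \ref{Thm4} to the previously established comparison theorems (Theorem \ref{Thm2} and Theorem \ref{Thm3}) together with the catenation property (property (4) in Theorem \ref{Thm1}). The key observation is that the SPS index is, by construction, a \emph{difference} of two indices of paths starting at $I_{2n}$, so that we can apply the identity $\mu_{L}=\mu-r$ (resp.\ $i_{L_0}=\mu-r-c$) separately at each end and then telescope.

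First I would recall the definition of the SPS index. By the discussion preceding Definition \ref{SPS index}, one chooses a reference path $\gamma\in\mathcal{P}(2n,\mathbb{R})$ with $\gamma(0)=I_{2n}$ and $\gamma(1)=\Phi(0)$, and sets
\begin{align*}
\hat{\mu}_{L}(\Phi)=\mu_{L}(\gamma*\Phi)-\mu_{L}(\gamma),
\end{align*}
where $\gamma*\Phi$ is the concatenation of $\gamma$ and $\Phi$, and similarly $\hat{i}_{L_0}(\Phi)=i_{L_0}(\gamma*\Phi)-i_{L_0}(\gamma)$. Both $\gamma*\Phi$ and $\gamma$ start at $I_{2n}$, so that the hypotheses of Theorems \ref{Thm2} and \ref{Thm3} apply.

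Next I would apply Theorem \ref{Thm2} to both $\gamma*\Phi$ and $\gamma$. This gives
\begin{align*}
\mu_{L}(\gamma*\Phi)&=\mu(\gamma*\Phi)-r\bigl((\gamma*\Phi)(1)\bigr)=\mu(\gamma*\Phi)-r(\Phi(1)),\\
\mu_{L}(\gamma)&=\mu(\gamma)-r(\gamma(1))=\mu(\gamma)-r(\Phi(0)).
\end{align*}
Invoking the catenation property $\mu(\gamma*\Phi)=\mu(\gamma)+\mu(\Phi)$, one obtains
\begin{align*}
\hat{\mu}_{L}(\Phi)=\mu(\gamma*\Phi)-\mu(\gamma)-r(\Phi(1))+r(\Phi(0))=\mu(\Phi)+r(\Phi(0))-r(\Phi(1)),
\end{align*}
which is the first formula of the theorem. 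The second formula follows in exactly the same way from Theorem \ref{Thm3}, where now the additional $L_0$-concavity terms $c(\gamma(1))=c(\Phi(0))$ and $c((\gamma*\Phi)(1))=c(\Phi(1))$ enter and produce the extra contribution $c(\Phi(0))-c(\Phi(1))$.

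The main point to verify carefully, and what I expect to be the only genuinely non-routine step, is that the outcome is independent of the choice of the auxiliary path $\gamma$ from $I_{2n}$ to $\Phi(0)$. For the left-hand side this is precisely the well-definedness of the SPS index already proved in Section \ref{rev SPS index}; for the right-hand side it follows once one checks that $\mu(\gamma)$ cancels in the telescoping above. The cancellation only uses the catenation property together with the homotopy invariance of $\mu$, and any two such reference paths are homotopic through paths with the same endpoints after composition with a loop based at $I_{2n}$ whose contribution to $\mu$ is absorbed in both $\mu(\gamma*\Phi)$ and $\mu(\gamma)$. Hence no obstruction arises and the two displayed formulas hold as stated.
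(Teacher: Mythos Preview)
Your proposal is correct and follows essentially the same route as the paper: use Definition \ref{SPS index}, apply Theorem \ref{Thm2} (resp.\ Theorem \ref{Thm3}) to each of the two paths starting at $I_{2n}$, and then invoke the catenation property of Theorem \ref{Thm1}(4) to telescope. Your final paragraph on independence of the auxiliary path $\gamma$ is superfluous, since the right-hand side $\mu(\Phi)+r(\Phi(0))-r(\Phi(1))$ manifestly does not involve $\gamma$ and the well-definedness of the left-hand side is already part of Definition \ref{SPS index}.
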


Then for a symplectic vector space $(\R^{2n},\omega_0)$, we consider paths of Lagrangian subspaces $L(t)$, $t\in [0,1]$.
As we mentioned, the Cappell-Lee-Miller index for Lagrangian path pairs $f(t):=(L_1(t), L_2(t))$, denoted by $\mu_{CLM}(f)$,
can naturally induce an index for general symplectic paths, denoted by $\mu_{CLM}(\Phi)$(see Definitions \ref{Def-CLM} and \ref{Def-CLM-sp}).
Then we have
\begin{theorem}[Comparison with Cappell-Lee-Miller index]\label{Thm5}
   Let $f(t)=(L_1, L_2(t))=(L_1, \Phi(t) L_1)$ be a Lagrangian pair, where $L_1=\mathbb{R}^{n} \times \{0\}$ and $\Phi$ is a symplectic path. Then there exists a corresponding orthogonal symplectic path $O$  such that
	\begin{align}\label{1.8}
		\mu_{CLM}(\Phi):=\mu_{CLM}(f)=\mu(O).
	\end{align}
\end{theorem}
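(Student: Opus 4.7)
The plan is to reduce the identity to a comparison between two rotation-type counts on the unitary representative of the orthogonal symplectic path, and then to check that the two endpoint conventions agree. First, the Cappell-Lee-Miller index for a Lagrangian pair $(L_1(t),L_2(t))$ depends only on the Lagrangian paths themselves and not on how they are generated, so with $O$ obtained from Proposition \ref{sp to osp} satisfying $O(t)L_1=\Phi(t)L_1$ we have
\[
\mu_{CLM}(\Phi)=\mu_{CLM}(L_1,\Phi(\cdot)L_1)=\mu_{CLM}(L_1,O(\cdot)L_1),
\]
and the theorem reduces to proving $\mu_{CLM}(L_1,O(\cdot)L_1)=\mu(O)$.

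Next, since $O(t)$ is orthogonal symplectic for every $t$, the orthogonalization step \eqref{4.3} in the definition of $\mu$ is vacuous, and the global perturbation \eqref{4.8} together with the extension \eqref{4.13} apply directly. Writing $O(t)=\iota(U(t))$ under the standard isomorphism $O(2n)\cap Sp(2n,\R)\cong U(n)$, the explicit formula for $\rho$ from \cite{SZ} (cf.\ \eqref{2.2}) gives $\rho(O(t))=\det U(t)\in\mathbb{S}^1$. On the Lagrangian side, the Grassmannian is $\Lambda(n)\cong U(n)/O(n)$ and the path $O(t)L_1$ corresponds to the class $[U(t)]$; the Maslov class on $\Lambda(n)$ pulls back through this quotient to the winding cocycle of $\det$ on $U(n)$. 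Consequently $\mu_{CLM}(L_1,O(\cdot)L_1)$ is intrinsically the signed rotation count of $\det U(t)$ on $[0,1]$ together with the endpoint correction prescribed by the CLM axioms of \cite{CLM}, and by construction $\mu(O)$ is the rotation count of the same function $\rho(O(t))=\det U(t)$ together with the endpoint correction coming from \eqref{4.8}--\eqref{4.13}.

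To promote this matching of the ``bulk'' winding to equality of the full integers, I would proceed axiomatically. Both $\mu$ and $\mu_{CLM}$ are homotopy invariant with fixed endpoints, additive under concatenation, and additive under symplectic direct sums (Theorem \ref{Thm1}(3)--(5), and the corresponding CLM axioms from \cite{CLM}). Using these properties, one may homotope $O$ without changing either side into a concatenation of standard building blocks lying in $Sp(2,\R)\cap O(2)\cong U(1)$, on which both indices are read off directly as winding numbers of $U(1)$-valued functions. Agreement on these generators then propagates by additivity to general $O$.

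\textbf{Main obstacle.} The main difficulty is matching the endpoint conventions. The CLM index resolves non-transverse endpoints through its internal axioms, whereas $\mu$ resolves them via the global perturbation and extension into the non-degenerate locus. Because the cycle $\mathscr{C}(2n,\R)$ used in our construction includes both the $+1$ and $-1$ components (see Remark \ref{rk cycle}), one must be careful when $\det U(0)$ or $\det U(1)$ lies in $\{\pm 1\}$: the perturbation \eqref{4.8} moves these off $\{\pm 1\}$ in a controlled, unique direction (cf.\ Remark \ref{rk difference}), and I expect the delicate check to be that this produces exactly the same integer correction as the CLM endpoint axiom. The orthogonal reduction in Step 1 is precisely what makes this endpoint bookkeeping tractable.
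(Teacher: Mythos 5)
Your reduction to $\mu_{CLM}(L_1,O(\cdot)L_1)=\mu(O)$ and your identification of the bulk rotation of $\rho(O(t))=\det(X(t)+iY(t))$ with the intersection count against the Maslov cycle are both sound, and they match the paper's underlying mechanism (via Proposition \ref{sp to osp}, $O(t)L_1\in\Sigma(L_1)$ precisely when $O(t)\in\mathscr{C}(2n,\R)$, and a transversal crossing of $\mathscr{C}(2n,\R)$ contributes $\pm1$ to $\mu$ according to the direction in which a first kind eigenvalue passes through $\pm1$). But the argument is not complete: the entire content of the clean equality, as opposed to an equality up to an endpoint defect, lies in the endpoint bookkeeping, and you explicitly leave that step as an expectation. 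Note that every other comparison theorem in the paper (Theorems \ref{Thm2}--\ref{Thm6}) produces a nonzero endpoint correction ($r$, $c$, or $\tfrac{1}{2}(s(0)-s(1))$); that the correction vanishes in the Cappell--Lee--Miller comparison is exactly what must be verified, and your axiomatic reduction to $U(1)$ blocks does not by itself decide it.

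The paper closes this gap concretely: it forms the proper path $\bar f=f_1\# f\# f_2$ with the standard CLM tails, so that $\mu_{CLM}(f)=d+p-q$ with $d=\dim(L_1\cap L_2(0))$; it lets $O_1,O,O_2$ be the orthogonal paths corresponding to $f_1,f,f_2$ and shows $\mu_{CLM}(\bar f)=p-q=\mu(\bar O)$ by the transversal crossing count; and it then computes directly from Definition \ref{Def Maslov.type.index} that $\mu(O_1)=-d=\mu_{CLM}(f_1)$ and $\mu(O_2)=0=\mu_{CLM}(f_2)$, so that catenation (Theorem \ref{Thm1}(4)) yields $\mu_{CLM}(f)=d+\mu(O_1)+\mu(O)+\mu(O_2)=\mu(O)$. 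To repair your argument you would need the analogous computation: check that the global perturbation $e^{-\theta J}$ (which does agree in sign with the perturbation used in Definition \ref{Def-CLM}) together with the extension to $W_{A,B}$ contributes exactly the same integer at each endpoint as the tails $f_1,f_2$ do on the Lagrangian side. A further small point worth making explicit in your Step 2: the Maslov class on $\mathscr{L}(n)\cong U(n)/O(n)$ is detected by the winding of $\det^2$, whereas $\rho(O(t))=\det U(t)$; these reconcile only because $\Delta$ is normalized via $e^{i\pi\alpha(t)}$ rather than $e^{2\pi i\alpha(t)}$.
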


\begin{remark}\label{rk relation to CLM}
For the Theorem above, the Cappell-Lee-Miller index can be calculated by our Maslov-type index of some special symplectic path.  
In some cases, one can also establish the relation for more general symplectic paths.
For instance, suppose that $\Phi$ or its orthogonalization $\Phi^{\#}$ \eqref{4.3} is homotopic to $O$ with fixed endpoints, then by Theorem \ref{Thm1} (3), $\mu_{CLM}(\Phi)=\mu_{CLM}(f)=\mu(O)=\mu(\Phi)$.
\end{remark}

\begin{theorem}[Comparison with Robbin-Salamon index]\label{Thm6}
	Consider the product $(\mathbb{R}^{2n}, \omega_0)=(\mathbb{R}^{2} \times \mathbb{R}^{2} \times \cdots \times \mathbb{R}^{2}, \omega_1 \oplus \omega_2 \oplus \cdots \oplus \omega_n)$, if $\Phi$ is a diagonal path in $\mathcal{P}(2n,\mathbb{R})$ of the form as
$$\Phi(z_1, z_2, \cdots, z_n)=(\Phi_1 z_1, \Phi_2 z_2, \cdots, \Phi_n z_n),$$
	where $\Phi_j \in \mathcal{P}(2,\mathbb{R}), j=1,2,...,n$, then
	\begin{align}\label{1.9}
		\mu_{RS}(\Phi)=\mu(\Phi)+\frac{1}{2}(s(0)-s(1)),
	\end{align}
	where $s(0)$ and  $s(1)$ are the numbers of  crossing forms for $\Phi_1, \cdots, \Phi_n$ that are non-degenerate at $t=0$ and $1$, respectively (see \eqref{3.13},\eqref{5.8}).
	If $\Phi{'} \in \mathcal{P}(2n,\mathbb{R})$ and there exists a symplectic path $T$ such that $T^{-1}\Phi{'}T=\Phi$. Set $L=\{0\} \times \mathbb{R}^{n}$, then
	\begin{align}\label{1.10}
		\mu'_{RS}(\Phi{'}(TL),TL)=\mu(\Phi{'})+\frac{1}{2}(s(0)-s(1)).
	\end{align}	
\end{theorem}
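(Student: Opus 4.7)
The plan is to prove both \eqref{1.9} and \eqref{1.10} by first reducing to the case $n=1$ via symplectic additivity, and then performing a direct comparison inside $Sp(2,\R)$ that tracks what happens at the endpoints.

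For \eqref{1.9}, property (5) of Theorem \ref{Thm1} gives $\mu(\Phi)=\sum_{j=1}^{n}\mu(\Phi_j)$, and the Robbin-Salamon index is likewise additive under symplectic direct sums---an immediate consequence of its crossing-form definition, since the crossing form of a product decomposes as the direct sum of the factor crossing forms. The counts $s(0)$ and $s(1)$ of non-degenerate endpoint crossings are additive over the $n$ factors as well. Hence it suffices to prove \eqref{1.9} when $n=1$ and $\Phi\in\mathcal{P}(2,\R)$. In the two-dimensional case, I would compare $\mu_{RS}$ and $\mu$ by examining the contributions separately in the interior and at the endpoints. The Robbin-Salamon index is the sum of half the signatures of crossing forms at all crossings, with endpoint crossings weighted by $\tfrac{1}{2}$. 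Our index $\mu$, by contrast, is defined by orthogonalizing the endpoints (see \eqref{4.3}) and then applying the global perturbation \eqref{4.8} and the extensions \eqref{4.13}; this procedure converts each endpoint into a nearby non-degenerate matrix and counts the resulting rotation as a full integer. Thus at a non-degenerate endpoint crossing the Robbin-Salamon index contributes $\pm\tfrac{1}{2}$ while our index absorbs the full unit, the sign being dictated by the prescribed direction of the global perturbation. Using that orthogonalization preserves the rotation number (\eqref{orthogonal num hold}), these contributions should cancel in the interior and match at the ends to give exactly the correction $\tfrac{1}{2}(s(0)-s(1))$; the asymmetry in sign between $t=0$ and $t=1$ comes from the fact that a crossing at $t=0$ is pushed \emph{into} $[0,1]$ by the perturbation and is therefore picked up by $\mu$, whereas a crossing at $t=1$ is pushed out.

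For \eqref{1.10}, using $\Phi'(t)=T(t)\Phi(t)T(t)^{-1}$, the Lagrangian pair on the left-hand side satisfies
\begin{align*}
\bigl(\Phi'(t)T(t)L,\ T(t)L\bigr)\;=\;\bigl(T(t)\Phi(t)L,\ T(t)L\bigr),
\end{align*}
and by the symplectic naturality of the Robbin-Salamon Maslov index of Lagrangian pairs (applying $T(t)^{-1}$ pointwise in $t$) the left-hand side equals $\mu'_{RS}(\Phi(t)L,L)$. One then identifies $\mu'_{RS}(\Phi L,L)$ with $\mu_{RS}(\Phi)$ via the standard correspondence between the Maslov index of the graph of a symplectic path (in the diagonal Lagrangian) and the Maslov index of its action on the vertical Lagrangian $L=\{0\}\times\R^{n}$, which in the diagonal case $\Phi=\Phi_1\oplus\cdots\oplus\Phi_n$ reduces factor-wise to the two-dimensional computation already handled above. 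Applying \eqref{1.9} to $\Phi$ yields $\mu_{RS}(\Phi)=\mu(\Phi)+\tfrac{1}{2}(s(0)-s(1))$, and it remains to verify $\mu(\Phi)=\mu(\Phi')$: this follows because the conjugating path $T$ induces, after the endpoint orthogonalizations of both $\Phi$ and $\Phi'$, a homotopy with fixed endpoints between suitable representatives, to which Theorem \ref{Thm1} (3) then applies.

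The main obstacle is the endpoint bookkeeping in the second paragraph: one must verify that, among all possible crossings of $\Phi$ with the degenerate cycle $\mathscr{C}(2n,\R)$, only the non-degenerate endpoint crossings (those counted by $s(0)$ and $s(1)$) produce a mismatch between $\mu_{RS}$ and $\mu$, and that the mismatch is precisely $+\tfrac{1}{2}$ at $t=0$ and $-\tfrac{1}{2}$ at $t=1$ as dictated by the orthogonalization-and-perturbation recipe. A secondary technical point in \eqref{1.10} is the conjugation invariance $\mu(\Phi')=\mu(\Phi)$, which is not an immediate corollary of homotopy-with-fixed-endpoints and requires a direct comparison of the orthogonalized endpoints of $\Phi$ and $\Phi'$ via $T(0)$ and $T(1)$.
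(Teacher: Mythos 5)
Your overall architecture matches the paper's proof: reduce to $n=1$ by symplectic additivity of both $\mu$ and $\mu_{RS}$, observe that interior crossings contribute identically to both indices, and locate the entire discrepancy at the two endpoints; for \eqref{1.10}, pass through the symplectic invariance of $\mu'_{RS}$ and the conjugation invariance of $\mu$. However, the step you yourself flag as ``the main obstacle'' --- the endpoint bookkeeping --- is precisely the content of the theorem, and the heuristic you offer in its place does not deliver the stated formula. The correction $\tfrac12(s(0)-s(1))$ depends only on the \emph{number} of non-degenerate endpoint crossings, not on their signatures. Your picture (``a crossing at $t=0$ is pushed into $[0,1]$ and picked up by $\mu$ as a full unit, a crossing at $t=1$ is pushed out'') would make $\mu$ gain or lose a full $\pm 1$ according to the \emph{sign} of $\theta'_j(0)$, against the symmetric $\pm\tfrac12$ of $\mu_{RS}$, so the resulting discrepancy would be $\mp\tfrac12$ depending on orientation --- i.e.\ a signature-dependent correction, not the count-dependent one in \eqref{1.9}. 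What the paper actually does here is compute the crossing form explicitly for the frame $Z_j(t)=(\cos\theta_j(t),\sin\theta_j(t))^{T}$, obtaining $\Gamma(\Phi_j U,U,t)(v)=\theta'_j(t)u^2$, and then enumerate the four sign cases $\theta'_j(0)\gtrless 0$, $\theta'_j(1)\gtrless 0$ separately, tracking in each case how the orthogonalization, the clockwise perturbation $e^{-\theta J}$, and the choice of extension endpoint $W_{A,B}$ via \eqref{4.12} determine the integer contribution to $\mu$. Without this case analysis the sign-independence of the correction is not established, so the proposal has a genuine gap at its central step.

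Two smaller points. First, the identification $\mu'_{RS}(\Phi L,L)=\mu_{RS}(\Phi)$ does not require the graph correspondence you invoke (that construction relates $\mu_{CZ}$ to a Maslov index in doubled dimension); since the second Lagrangian path is constant, the relative crossing form \eqref{3.18} reduces to the ordinary one, and $\mu_{RS}(\Phi)$ is by definition \eqref{3.17} the index of the pair $(\Phi L, L)$. Second, you are right that $\mu(\Phi')=\mu(\Phi)$ for a conjugation by a \emph{path} $T(t)$ needs an argument beyond the naturality \eqref{2.11} stated for a fixed isomorphism; the paper glosses over this, and your proposed route (comparing the orthogonalized endpoints via $T(0)$, $T(1)$ and producing a fixed-endpoint homotopy) is a reasonable way to fill it, though it too is left unexecuted.
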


\bigskip

In section \ref{Sec-Prel}, we first recall some facts about symplectic matrices and introduce some tools, which is used to construct the index. 
In section \ref{Sec-rev}, we review several other  Maslov-type indices and show the ideas and methods of their definitions.
Then we show our definition in section \ref{Sec-Mas-def} via the tools from section \ref{Sec-Prel} and some ideas from section \ref{Sec-rev}. 
Finally, in section \ref{Sec-proof}, we prove the main results  and give two concrete examples  to show the interrelationships of different indices.

\bigskip

\noindent {\bf Acknowledgements.} The authors would like to thank Yiming Long and Chungen Liu for pointing out some important references, helpful communications and invaluable suggestions for improving the manuscript.They also want to thank Duanzhi Zhang for inspiring conversations.
Both authors are partially supported by  the project No. 11671209 of NSFC and the project No. 2021A1515010379 of Guangdong Basic and Applied Basic Research Foundation, China.

\section{Preliminaries}\label{Sec-Prel}	
In this section, we introduce some definitions and results that we use in the article.

\begin{definition}\label{Def symp.sp.}
	Let $V$ be a vector space of $2n$ dimension and $\omega$  a bilinear form in $V$ satisfying:
	\begin{align*}
		&(1)\ \forall \ \xi,\eta \in V,\ \omega(\xi,\eta) = -\omega(\eta,\xi),\\
		&(2)\ \text{If} \ \forall \ \xi \in V,\ \omega(\xi,\eta) = 0,\ \text{then} \ \eta=0.
	\end{align*}
	Then the space $(V,\omega)$ is called a symplectic space. This bilinear form $\omega$ is called the symplectic form of $(V,\omega)$.
\end{definition}

\begin{definition}\label{Def symp.iso.}
	Let $T: (V_1,\omega_1) \to (V_2,\omega_2)$ be a linear map. $T$ is called a symplectic isomorphism if T is an isomorphism and has the pull-back $T^*\omega_2 = \omega_1$.
\end{definition}
The set of all symplectic isomorphisms of $(V,\omega)$ with composition can be looked as a group, called symplectic group \cite{L,MD}, and is denoted by $Sp(V,\omega)$.
A  continuous map
\begin{align*}
	\Phi:[0,1] \to Sp(V,\omega)
\end{align*}
is called a symplectic path in $Sp(V,\omega)$. If $\Phi(0)=\Phi(1)$, then $\Phi$ is called a symplectic loop. If two symplectic paths $\Phi, \Psi$ satisfies $\Phi(1) = \Psi(0)$, then they have the catenation defined by
\begin{equation}\label{catenation}
	\Phi \# \Psi(t) := \left\{
	\begin{array}{ll}
		\Phi(2t) & 0 \leq t < \frac{1}{2} \\
		\Psi(2t-1) & \frac{1}{2} \leq t \leq 1
	\end{array} \right.	.
\end{equation}

\begin{definition}\label{Lagrangian subsp}
	Let $L$ be a $n$-dimensional subspace of the symplectic space $(V,\omega)$ and
	\begin{align*}
		L^{\bot} = \{v \in V |\ \omega(v,w)=0,\ \forall w \in L\},
	\end{align*}
	where $L^{\bot}$ is called the skew-orthogonal complement \cite{A2} of $L$. If $L=L^{\bot}$, then $L$ is called a Lagrangian subspace of $(V,\omega)$.
\end{definition}
The set of all Lagrangian subspaces of $V$ is called Lagrangian Grassmannian of $V$, denoted by $\mathscr{L}(V)$. The isomorphism of two Lagrangian subspaces is symplectic isomorphism, so the automorphism group of $\mathscr{L}(V)$ is $Sp(V,\omega)$. A continuous map $L:[0,1] \to \mathscr{L}(V)$ is called a Lagrangian path and is called Lagrangian loop if $L(0)=L(1)$.

Now we consider the symplectic space $(\mathbb{R}^{2n},\omega_0)$
where\begin{align*}
	\omega_0= {\sum\limits_{j=1}^{n}{dx}^{j}\wedge{dy}^{j}},
\end{align*} is the standard symplectic form and $(x^1, x^1,..., x^n, y^1, y^2,..., y^n)$ is the coordinate of $(\mathbb{R}^{2n},\omega_0)$.
Denote the symplectic group of $(\mathbb{R}^{2n},\omega_0)$ by $Sp(2n,\mathbb{R})$, $M \in Sp(2n,\mathbb{R})$ is a $2n \times 2n$ real matrix and it satisfies $M^{T}J_{0}M = J_{0}$, where
\begin{align*}
	J_{0} =
	\begin{pmatrix}
		O & I_n\\
		-I_n & O\\
	\end{pmatrix}.
\end{align*}
Then we have the following definition:

\begin{definition}\label{Def symp.matrix}
	Let $M \in \mathbb{R}^{2n \times 2n}$. $M$ is called a symplectic matrix if it satisfies
	\begin{align*}
		M^{T}J_{0}M = J_{0}.
	\end{align*}
\end{definition}

We continue to introduce some properties about symplectic matrices and the following proposition holds  (also see \cite{L,MD}).

\begin{proposition}\label{Prop symp.matrix}
	For an arbitrary symplectic matrix $M$, denote the set of all eigenvalues of $M$ by $\sigma(M)$. we have \\	
	(1) $\det M = 1$. \\
	(2) If $\lambda \in \sigma(M)$, then $\lambda^{-1} \in \sigma(M)$, i.e. $M$ has the pairs $\{\lambda,\lambda^{-1}\}$ of eigenvalues. If $\lambda \in \sigma(M) \cap \mathbb{S}^{1}$, then $M$ has the pairs $\{\lambda,\bar{\lambda}\}$ of eigenvalues. \\
\end{proposition}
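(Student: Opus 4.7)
The plan is to establish both parts directly from the defining identity $M^T J_0 M = J_0$, using only standard linear algebra.

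For part (1), I would first take determinants on both sides of the defining identity. Since $\det(M^T) = \det(M)$ and $\det(J_0) = 1 \ne 0$, this immediately yields $(\det M)^2 = 1$, so $\det M = \pm 1$. To rule out the $-1$ case, the cleanest route is the Pfaffian identity: for any skew-symmetric matrix $B$ and any matrix $A$ one has $\mathrm{Pf}(A^T B A) = \det(A) \cdot \mathrm{Pf}(B)$. Applying this with $A = M$ and $B = J_0$ gives $\mathrm{Pf}(J_0) = \det(M) \cdot \mathrm{Pf}(J_0)$, and since $\mathrm{Pf}(J_0) \ne 0$, we conclude $\det M = 1$. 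As an alternative I would sketch the connectedness argument: $Sp(2n,\mathbb{R})$ is path-connected and contains $I_{2n}$, so the continuous function $\det$ taking values in $\{\pm 1\}$ must be identically $1$ on it.

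For part (2), the main step is to rewrite the defining relation as $M^{-1} = J_0^{-1} M^T J_0$, which is obtained by multiplying $M^T J_0 M = J_0$ on the right by $M^{-1}$ (using the invertibility provided by (1)) and then solving for $M^{-1}$. This exhibits $M^{-1}$ as conjugate, hence similar, to $M^T$. Because a matrix and its transpose share the characteristic polynomial, $M^T$ has the same spectrum as $M$, so $\sigma(M^{-1}) = \sigma(M)$. Combined with the elementary fact $\sigma(M^{-1}) = \{\lambda^{-1} : \lambda \in \sigma(M)\}$ for invertible $M$, this delivers the reciprocal-pair statement. For the conjugate-pair statement, I would simply invoke the reality of $M$: the characteristic polynomial has real coefficients, so if $\lambda \in \sigma(M)$ then $\bar\lambda \in \sigma(M)$. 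For $\lambda \in \mathbb{S}^1$ we have $\bar\lambda = \lambda^{-1}$, so the conjugate pairing and the reciprocal pairing coincide on the unit circle.

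The hard part, such as it is, is mainly cosmetic: choosing the cleanest tool to exclude $\det M = -1$. The Pfaffian argument is a single line but presupposes the identity $\mathrm{Pf}(A^T B A) = \det(A)\cdot\mathrm{Pf}(B)$; the connectedness argument is short but borrows the topological fact that $Sp(2n,\mathbb{R})$ is path-connected. Everything else follows from routine manipulations of $M^T J_0 M = J_0$ together with the invariance of the spectrum under transposition and complex conjugation.
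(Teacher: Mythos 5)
Your proposal is correct. The paper itself gives no proof of this proposition; it simply cites the standard references (Long's book and McDuff--Salamon), and your argument is precisely the standard one found there: $(\det M)^2=1$ from taking determinants, the Pfaffian identity $\mathrm{Pf}(M^TJ_0M)=\det(M)\,\mathrm{Pf}(J_0)$ (or connectedness of $Sp(2n,\mathbb{R})$) to exclude $\det M=-1$, and the similarity $M^{-1}=J_0^{-1}M^TJ_0$ together with realness of $M$ for the spectral pairings. A small bonus of your similarity argument worth noting: since it identifies the characteristic polynomials of $M^{-1}$ and $M$, it shows that $\lambda$ and $\lambda^{-1}$ occur with equal algebraic multiplicity, which is the form of the pairing actually used later in the paper.
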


Now we introduce the first kind eigenvalue \cite{SZ} of a symplectic matrix $M$. We view $M$ as a map from $\mathbb{C}^{2n}$ to $\mathbb{C}^{2n}$, let $\lambda \in \sigma(M)$  be an eigenvalue of multiplicity $m(\lambda)$, the generalized eigenspace
$$E_{\lambda}(M) = \bigcup\limits_{j=1}^{m(\lambda)} Ker(\lambda I - M)^{j}$$
is a subspace of $\mathbb{C}^{2n}$. The action of $\omega_0$ on $E_{\lambda}(M) \times E_{\lambda}(M)$ is given by
\begin{align*}
	\omega_0(\xi_1,\xi_2)=(J_{0} \xi_1)^{T}\xi_2,\ \forall \xi_1,\xi_2 \in E_{\lambda}(M).
\end{align*}
For $\forall \lambda \in \sigma(M) \cap \mathbb{S}^{1} \backslash \{\pm 1\}$, define a bilinear form
\begin{align*}
	Q_{\lambda}(\xi_1,\xi_2) = Im \, \omega_0(\bar{\xi_1},\xi_2)
\end{align*}
on $E_{\lambda}(M)$. Since
\begin{align*}
	Q_{\lambda}(\xi_1,\xi_2)-Q_{\lambda}(\xi_2,\xi_1)
	&= Im[\omega_0(\bar{\xi_1},\xi_2)-\omega_0(\bar{\xi_2},\xi_1)]\\
	&=-Im[\overline{\omega_0(\bar{\xi_1},\xi_2)}-\overline{\omega_0(\bar{\xi_2},\xi_1)}]\\
	&=-Im[\omega_0(\xi_1,\bar{\xi_2})-\omega_0(\xi_2,\bar{\xi_1})]\\
	&=-Im[\omega_0(\bar{\xi_1},\xi_2)-\omega_0(\bar{\xi_2},\xi_1)],
\end{align*}
then $Q_{\lambda}(\xi_1,\xi_2)-Q_{\lambda}(\xi_2,\xi_1) = 0$, so $Q_{\lambda}$ is a non-degenerate symmetric bilinear form, hence $Q_{\lambda}$ divides $E_{\lambda}(M)$ into two subspaces $E_{\lambda}^{+}(M)$ and $E_{\lambda}^{-}(M)$ such that
\begin{align*}
	Q_{\lambda}(\xi,\xi) > 0,\ \forall \xi \in E_{\lambda}^{+}(M) \backslash \{0\}; \\
	Q_{\lambda}(\xi,\xi) < 0,\ \forall \xi \in E_{\lambda}^{-}(M) \backslash \{0\}.
\end{align*}
For an eigenvalue pair $\{\lambda, \bar{\lambda}\}$, they have the same generalized eigenspaces, the eigenvector $\xi \in E_{\lambda}^{+}(M)$ if and only if $\bar{\xi} \in E_{\lambda}^{-}(M)$ because of
\begin{align*}
	Q_{\lambda}(\bar\xi,\bar\xi)=Im \, \omega_0(\xi,\bar{\xi})=-Im \, \omega_0(\bar{\xi},\xi)=-Q_{\lambda}(\xi,\xi),
\end{align*}
then we have $E_{\lambda}^{+}(M)=E_{\bar{\lambda}}^{-}(M)$. Since the identity $Q_{\lambda}(i\xi_1,i\xi_2) = Q_{\lambda}(\xi_1,\xi_2)$, both
$E_{\lambda}^{+}(M)$ and $E_{\lambda}^{-}(M)$ are of the even dimension. Set $\dim E_{\lambda}^{+}(M) = 2m^{+}(\lambda)$ and then we have the definition of the first kind eigenvalue:
\begin{definition}\label{Def first kind}
	$\lambda \in \sigma(M)$ is called {\bf the first kind eigenvalue} of $M$ if it satisfies one of the following conditions:
	\begin{align*}
		&(1) \ \lambda = \pm{1} \ \text{or} \ |\lambda| < 1;
		&(2)\  \lambda \in \mathbb{S}^{1} \backslash \{\pm{1}\} \ \text{and} \ m^{+}(\lambda)>0.
	\end{align*}
\end{definition}

\begin{remark}
In contrast to the definition of Salamon and Zehnder \cite{SZ}, Definition \ref{Def first kind} takes $\pm{1}$ into account for later elaboration, but this does not affect the construction of our subsequent definitions.  
\end{remark}
If all eigenvalues of $M$ are distinguishable, then we can order all eigenvalues as
\begin{align*}
	\lambda_1,\ \lambda_2,\ ...,\  \lambda_n,\ \lambda^{-1}_1,\ \lambda^{-1}_2,\ ...,\  \lambda^{-1}_n,
\end{align*}
where $\lambda_1, \lambda_2, ...,\lambda_n$ are the first kind eigenvalues, then we can define a map $\rho:Sp(2n,\mathbb{R}) \to \mathbb{S}^{1}$ as
\begin{align}\label{2.1}
	\rho(M) = \prod\limits_{j=1}^{n} \frac{\lambda_j}{|\lambda_j|}.
\end{align}
We denote by $m^{+}(\lambda)$ the multiplicity of the first kind eigenvalue $\lambda$, denote the number of pairs \{$\lambda$,$\lambda^{-1}$\} of negative eigenvalues by $m_0$. According to the Theorem 3.1 of \cite{SZ}, we have

\begin{theorem}[Salamon-Zehnder]\label{Thm SZ}
	There is a unique continuous mapping of
	\begin{align*}
		\rho : Sp(2n,\R) \to \mathbb{S}^{1}
	\end{align*}
	given by
	\begin{align}\label{2.2}
		\rho(M) = (-1)^{m_0}\prod\limits_{\lambda \in \sigma(M) \cap \mathbb{S}^{1} \backslash \{\pm{1}\}}\lambda^{m^{+}(\lambda)}
	\end{align}
	and satisfying the following properties: \\
	(1) Naturality: If $T:Sp(2n,\mathbb{R}) \to Sp(V,\omega)$ is a symplectic isomorphism, then
	\begin{align}\label{2.3}
		\rho(TMT^{-1}) = \rho(M)
	\end{align}
	for any $M \in Sp(2n,\mathbb{R})$. \\
	(2) Product: If $(\mathbb{R}^{2n},\omega_0) = (V_1 \times V_2,\omega_1 \oplus \omega_2)$, then
	\begin{align}\label{2.4}
		\rho(M) = \rho(M^{'})\rho(M^{''})
	\end{align}
	for any $M \in Sp(2n,\mathbb{R})$ of the form
	$M(z_1,z_2)=(M^{'}z_1,M^{''}z_2)$, where $M^{'} \in Sp(V_1,\omega_1)$ and $M^{''} \in Sp(V_2,\omega_2)$. \\	
	(3) Determinant: If $M \in Sp(2n,\mathbb{R}) \cap O(2n)$(i.e. the orthogonal group) is of the form
	\begin{align*}
		\begin{pmatrix}
			X & -Y \\
			Y & X
		\end{pmatrix} ,
	\end{align*}
	where $X^{T}Y = Y^{T}X$ and $X^{T}X+Y^{T}Y = I$, then
	\begin{align}\label{2.5}
		\rho(M) = \det(X+iY).
	\end{align} 	
	(4) Normalization:  If $M$ has no eigenvalue on $\mathbb{S}^{1}$, then
	\begin{align*}
		\rho(M) = \pm{1}.
	\end{align*}
\end{theorem}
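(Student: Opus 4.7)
The theorem splits into existence — that the formula \eqref{2.2} gives a continuous map obeying (1)--(4) — and uniqueness — that any continuous map obeying (1)--(4) must be \eqref{2.2}. My plan is to attack these in sequence, with continuity of \eqref{2.2} being the main technical point.

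First I would check continuity of the explicit formula. On the open dense stratum where $M$ has $2n$ simple eigenvalues avoiding $\pm 1$, the formula is visibly a continuous function of the roots and of the Krein signatures (which are locally constant). Then I would work through the codimension-one bifurcations of the spectrum as $M$ moves in $Sp(2n,\R)$. The delicate case is a conjugate pair $\{\lambda,\bar\lambda\}\subset \mathbb{S}^{1}\setminus\{\pm 1\}$ colliding at $-1$ and leaving along the negative real axis as $\{-r,-r^{-1}\}$: the factor $\lambda^{m^{+}(\lambda)}$ limits to $(-1)^{m^{+}(\lambda)}$, while $(-1)^{m_0}$ picks up one new negative pair and flips sign. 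The required cancellation holds because, by Krein's theorem, a pair can leave $\mathbb{S}^{1}$ only if its Krein signatures are opposite, and this forces $m^{+}(\lambda)$ to be odd at the limit. The collision at $+1$, the off-real collision of two $\mathbb{S}^{1}$-pairs of opposite Krein signature producing an off-circle quadruple, and persistence of $m^{+}(\lambda)$ on $\mathbb{S}^{1}$-persistent pieces (by upper semicontinuity of the signature of $Q_\lambda$) are easier. Properties (1)--(4) for the formula are then direct: (1) since similarity preserves spectra and the form $Q_\lambda$; (2) since the generalized eigenspaces of $M'\oplus M''$ split as $Q_\lambda$-orthogonal direct sums; (3) by identifying the first-kind eigenvalues of an orthogonal symplectic matrix with the eigenvalues of the associated unitary $X+iY\in U(n)$, so that the product is $\det(X+iY)$; and (4) since the product over $\sigma(M)\cap\mathbb{S}^{1}\setminus\{\pm 1\}$ is empty.

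For uniqueness, let $\rho_1,\rho_2$ both satisfy (1)--(4) and set $f=\rho_1\overline{\rho_2}$. Then $f$ is continuous, conjugation-invariant by (1), multiplicative under symplectic direct sums by (2), identically $1$ on $Sp(2n,\R)\cap O(2n)$ by (3), and $\{\pm 1\}$-valued on the non-$\mathbb{S}^{1}$-spectrum locus by (4). Using the polar decomposition $M=PO$ with $P$ positive definite symplectic and $O$ orthogonal symplectic, I would write $P=e^{A}$ for a symmetric $A\in\mathfrak{sp}(2n)$ and connect $M$ to $O$ by the path $t\mapsto e^{tA}O$. Naturality reduces $A$ to symplectic normal form; then (2) splits the analysis into $Sp$-invariant blocks of dimension $2$ and $4$. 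In those low dimensions the orbit structure under conjugation, together with the $\pm 1$ normalization at the boundary of the $\mathbb{S}^{1}$-spectrum locus, forces $f\equiv 1$ and therefore $\rho_1\equiv\rho_2$.

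The main obstacle is the Krein-signature bookkeeping in the continuity step: $(-1)^{m_0}$, the exponent $m^{+}(\lambda)$, and the base $\lambda$ all change simultaneously at a collision of an $\mathbb{S}^{1}$-pair at $\pm 1$, and one needs the precise fact that opposite Krein signatures are necessary to leave $\mathbb{S}^{1}$ in order to match the discrete jumps against the continuous limit. Once this cancellation is settled, the remainder of the existence argument and the uniqueness reduction are essentially bookkeeping and standard low-dimensional checks.
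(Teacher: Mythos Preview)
The paper does not contain a proof of this theorem. It is quoted from Salamon--Zehnder (introduced as ``According to the Theorem 3.1 of \cite{SZ}'') and used as a black box throughout; there is no argument in the paper for either existence or uniqueness of $\rho$. So there is nothing here to compare your proposal against.

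That said, your outline follows the standard Salamon--Zehnder strategy and is essentially correct in shape. One point to tighten: in the collision at $-1$, your phrasing ``this forces $m^{+}(\lambda)$ to be odd at the limit'' is not quite the right invariant. What you need is that for a simple conjugate pair $\{\lambda,\bar\lambda\}$ leaving the circle, exactly one of $m^{+}(\lambda),m^{+}(\bar\lambda)$ equals $1$ and the other $0$ (opposite Krein types), so the contribution $\lambda^{m^{+}(\lambda)}\bar\lambda^{m^{+}(\bar\lambda)}$ is a single factor tending to $-1$, matched by the new $(-1)$ in $(-1)^{m_0}$. Your uniqueness sketch is also a bit soft: rather than analyzing the path $e^{tA}O$ blockwise, the cleaner route (and the one in \cite{SZ}) is to observe that (1)--(3) already pin down $\rho$ on the dense set of matrices symplectically conjugate to a direct sum of $2\times 2$ blocks, and continuity extends this; property (4) is then a consequence rather than an input to uniqueness.
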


For any symplectic path $\Phi :[0,1] \to Sp(2n,\mathbb{R})$, the map $\rho(\Phi)$ is continuous, then there exists a continuous map $\alpha :[0,1] \to \mathbb{R}$ such that
\begin{align}\label{2.6}
	\rho(\Phi(t))=e^{i\pi \alpha(t)}.
\end{align}
Define the rotation number of the path $\Phi$ from time $0$ to $t$ as
\begin{align}\label{2.7}
	\Delta(\Phi(t)) = \alpha(t) - \alpha(0)
\end{align}
and simply write $\Delta(\Phi)=\Delta(\Phi(1))$. $\Delta$ is the important tools to define index and it has some properties about the homotopy of paths. Let $\Phi, \Psi$ be two paths in $Sp(2n,\mathbb{R})$, we call $\Phi$ and $\Psi$ are homotopic if there exists a continuous map $H(t, s)$ on $[0,1] \times [0,1]$ such that
\begin{align*}
	H(t,0)=\Phi(t), \ H(t,1)=\Psi(t).
\end{align*}
If a loop in $Sp(2n,\mathbb{R})$ is homotopy to a point, then we say this loop is contractible.

\begin{proposition}\label{Prop Delta}
	The rotation number $\Delta$ has the following properties:\\
	(1) If $\Phi$ is a symplectic loop, then $\Delta(\Phi) \in \mathbb{Z}$. In particular, if $\Phi$ is contractible, then
	\begin{align}\label{2.8}
		\Delta(\Phi)=0.
	\end{align}
	(2) If $0<a<1$, then
	\begin{align}\label{2.9}
		\Delta(\Phi) = \Delta(\Phi([0,a])) + \Delta(\Phi([a,1])).
	\end{align}
	(3) If $\Phi ,\Psi$ are two homotopic symplectic paths with fixed endpoints, then
	\begin{align}\label{2.10}
		\Delta(\Phi)=\Delta(\Psi).
	\end{align}
	(4) If $T:Sp(2n,\mathbb{R}) \to Sp(V,\omega)$ is a symplectic isomorphism, then
	\begin{align}\label{2.11}
		\Delta(T\Phi T^{-1}) =\Delta(\Phi).
	\end{align}
	(5 )If $(\mathbb{R}^{2n},\omega_0) = (V_1 \times V_2,\omega_1 \oplus \omega_2)$, then
	\begin{align}\label{2.12}
		\Delta(\Phi) = \Delta(\Phi_1) +\Delta(\Phi_2)
	\end{align}
	for any path $\Phi \in Sp(2n,\mathbb{R})$ of the form $\Phi(z_1,z_2)=(\Phi_1z_1,\Phi_2z_2)$, where $\Phi_j$ is the path of $Sp(V_j,\omega_j),\ j=1,2$.
\end{proposition}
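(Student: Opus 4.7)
The plan is to derive all five properties directly from (i) the existence of the continuous lift $\alpha:[0,1]\to\mathbb{R}$ of $\rho\circ\Phi$ with respect to the covering projection $x\mapsto e^{i\pi x}:\mathbb{R}\to\mathbb{S}^1$, and (ii) the algebraic properties of $\rho$ recorded in Theorem \ref{Thm SZ}. The lift $\alpha$ is unique up to an additive element of $2\mathbb{Z}$, so the difference $\Delta(\Phi)=\alpha(1)-\alpha(0)$ is well-defined as a real number.

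For (1), when $\Phi$ is a loop, $\rho(\Phi(1))=\rho(\Phi(0))$ forces $\pi(\alpha(1)-\alpha(0))\in 2\pi\mathbb{Z}$, so $\Delta(\Phi)\in 2\mathbb{Z}\subset\mathbb{Z}$. For contractibility, given a homotopy $H:[0,1]\times[0,1]\to Sp(2n,\mathbb{R})$ with $H(\cdot,0)=\Phi$ and $H(\cdot,1)$ constant, lift $\rho\circ H$ to a continuous $\tilde{\alpha}:[0,1]^{2}\to\mathbb{R}$ using that $[0,1]^{2}$ is simply connected. Then $s\mapsto\tilde{\alpha}(1,s)-\tilde{\alpha}(0,s)$ is a continuous $\mathbb{Z}$-valued function on the connected interval $[0,1]$, hence constant; evaluating at $s=1$ gives $0$. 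For (2), the catenation formula $\alpha(1)-\alpha(0)=(\alpha(a)-\alpha(0))+(\alpha(1)-\alpha(a))$ is immediate from the definition, once one observes that restricting $\alpha$ to $[0,a]$ (respectively $[a,1]$, after shifting the base point) is a valid continuous lift for the sub-path.

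For (3), given a homotopy $H$ with fixed endpoints, lift $\rho\circ H$ to $\tilde{\alpha}:[0,1]^{2}\to\mathbb{R}$. Since $H(0,\cdot)$ and $H(1,\cdot)$ are constant, $\rho\circ H(0,\cdot)$ and $\rho\circ H(1,\cdot)$ are constant in $\mathbb{S}^{1}$, so $\tilde{\alpha}(0,\cdot)$ and $\tilde{\alpha}(1,\cdot)$ are continuous $\mathbb{R}$-valued maps whose image lies in a fibre of the covering projection, namely a translate of $2\mathbb{Z}$; by connectedness they are constant in $s$. Hence $\tilde{\alpha}(1,s)-\tilde{\alpha}(0,s)$ is constant in $s$, yielding $\Delta(\Phi)=\Delta(\Psi)$. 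For (4), the naturality identity $\rho(TMT^{-1})=\rho(M)$ from Theorem \ref{Thm SZ}(1) gives $\rho\circ(T\Phi T^{-1})=\rho\circ\Phi$ pointwise, so the same $\alpha$ is a lift for both paths and their rotation numbers agree. For (5), the product identity $\rho(M)=\rho(M')\rho(M'')$ from Theorem \ref{Thm SZ}(2) implies that if $\alpha_{1}$ and $\alpha_{2}$ are the lifts associated to $\Phi_{1}$ and $\Phi_{2}$, then $\alpha_{1}+\alpha_{2}$ is a continuous lift of $\rho\circ\Phi$, so $\Delta(\Phi)=(\alpha_{1}+\alpha_{2})(1)-(\alpha_{1}+\alpha_{2})(0)=\Delta(\Phi_{1})+\Delta(\Phi_{2})$.

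The whole argument is essentially a standard covering-space exercise once Theorem \ref{Thm SZ} is available; the only subtlety, which I would want to spell out carefully, is the use of the two-parameter lift on $[0,1]^{2}$ in the proofs of (1) and (3) together with the ``continuous $\mathbb{Z}$-valued function on a connected space is constant'' principle. No genuine obstacle is expected, and no additional hypothesis beyond continuity of $\Phi$ and of $\rho$ is needed.
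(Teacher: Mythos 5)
Your proposal is correct, and it runs on the same engine as the paper's proof: lift $\rho\circ\Phi$ through the covering $x\mapsto e^{i\pi x}$ and read off endpoint differences, using the naturality and product identities of Theorem \ref{Thm SZ} for (4) and (5). The only place you diverge is in the homotopy statements. For (3) the paper concatenates $\Phi$ with the time-reversal of $\Psi$ to produce a contractible loop and then invokes parts (1) and (2), whereas you lift the homotopy over the square $[0,1]^2$ and argue that $\tilde{\alpha}(1,s)-\tilde{\alpha}(0,s)$ is a continuous function into a discrete fibre, hence constant; likewise for the contractible case of (1), where the paper simply cites that a contractible loop in $\mathbb{S}^1$ has winding number zero. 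Both are standard covering-space arguments and neither buys anything substantive over the other; your square-lifting version is marginally more self-contained, while the paper's concatenation trick reuses (1) and (2) and so is shorter on the page. One small point worth making explicit in your write-up of (1): for $s\mapsto\tilde{\alpha}(1,s)-\tilde{\alpha}(0,s)$ to be $2\mathbb{Z}$-valued you need each intermediate stage $H(\cdot,s)$ to be a loop, i.e.\ the contraction is a free homotopy through loops -- which is the only reading under which the claim $\Delta(\Phi)=0$ is true at all. Your sharper observation that $\Delta$ of a loop lies in $2\mathbb{Z}$ (not merely $\mathbb{Z}$) is also correct for this choice of covering map.
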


\begin{proof}
	(1) Since $\rho(\Phi(0))=\rho(\Phi(1))$, then $e^{i\pi(\alpha(1) - \alpha(0))}=1$ and we have
	\begin{align*}
		\Delta(\Phi) = \alpha(1) - \alpha(0) \in \mathbb{Z}.
	\end{align*}
	If $\Phi$ is contractible, then $\rho(\Phi(t))$ is contractible on $\mathbb{S}^{1}$, then $\Delta(\Phi)=0$. \\
	(2)	$\Delta(\Phi) = \alpha(1)-\alpha(0) = (\alpha(1)-\alpha(a))+(\alpha(a)-\alpha(0))=\Delta(\Phi |_{[0,a]}) + \Delta(\Phi |_{[a,1]})$.\\
	(3) Since $\Phi$ and $\Psi$ have the same end points, then
	\begin{equation*}
		\Phi \# (-\Psi(t)) := \left\{
		\begin{array}{ll}
			\Phi(2t) & 0 \leq t < \frac{1}{2} \\
			\Psi(2-2t) & \frac{1}{2} \leq t \leq 1
		\end{array} \right.	
	\end{equation*}
	is a contractible loop, where $-\Psi(t) := \Psi(1-t)$ is the reverse path. It follows from \eqref{2.8} and \eqref{2.9} that
	\begin{align*}
		\Delta(\Phi)-\Delta(\Psi)=\Delta(\Phi \# -\Psi) = 0
	\end{align*}
	and then $\Delta(\Phi)=\Delta(\Psi)$.\\
	(4) According to \eqref{2.3} and \eqref{2.7}, this property has been proved.\\
	(5) By \eqref{2.4}, we have $\rho(\Phi) = \rho(\Phi_1)\rho(\Phi_2)$, then there exists $\alpha_j :[0,1] \to \mathbb{R},\ j=1,2$ such that
	$$\rho(\Phi_j(t))=e^{i\pi \alpha_j(t)}, \ j=1,2.$$
	Then
	$$\rho(\Phi(t))=e^{i\pi (\alpha_1(t)+\alpha_2(t))},$$
	and hence
	$$\Delta(\Phi) = (\alpha_1(1)+\alpha_2(1))-(\alpha_1(0)+\alpha_2(0)) = \Delta(\Phi_1)+\Delta(\Phi_2).$$
	This completes the proof.
\end{proof}

\begin{remark}\label{CZ rotation num}
	The rotation number above is defined by Salamon and Zehnder \cite{SZ}. There is another version defined by Conley and Zehnder \cite{CZ2} and it also has those same properties as ones in Proposition \ref{Prop Delta}.
Recall for any symplectic path $\Phi=\Phi(t)$, it can be represented in polar form as
	\begin{equation}\label{polar form}
		\Phi=PO,
	\end{equation}
	where $P=(\Phi \Phi^T)^{1/2}$ is a positive definite symmetric and symplectic path and $O=P^{-1}\Phi$ is an orthogonal symplectic path which has the form as
	\begin{align*}
		\begin{pmatrix}
			X & -Y \\
			Y & X
		\end{pmatrix} ,
	\end{align*}
	where $X^{T}Y = Y^{T}X$ and $X^{T}X+Y^{T}Y = I$. Using this unique form of $O$, one can directly define a number for each $t$ as
$$\rho'(\Phi(t)):=\det(X(t)+iY(t))$$
and choose a continuous map $\alpha' :[0,1] \to \mathbb{R}$ such that $\rho'(\Phi(t))=e^{i\pi \alpha'(t)}$.
Then the rotation number of Conley-Zehnder version is defined by
$$\Delta'(\Phi): = \alpha'(1) - \alpha'(0)\ \ (=\Delta(O)).$$
Here we consider two special cases of $M \in Sp(2n,\R)$.

{\rm (1)} If $M$ is orthogonal and symplectic, then the positive definite symmetric and symplectic matrix  $P_M$ in the polar form is just identity $I_{2n}$.

{\rm (2)} On the other hand, if $$M = {\rm diag}\{\lambda_1,\cdots,\lambda_n,\lambda_{1}^{-1},\cdots,\lambda_{n}^{-1} \}$$ is diagonal and symplectic, then
$$P_M = {\rm diag}\{ |\lambda_1|,\cdots,|\lambda_n|,|\lambda^{-1}_{1}|,\cdots,|\lambda^{-1}_{n}| \}.$$
Since the set of positive definite symplectic and symmetric matrices is contractible, if $M=P_{M}O_{M}$ is of polar form and $M$ is a diagonal symplectic matrix (or an orthogonal symplectic matrix ),
one can easily choose a positive definite symmetric, diagonal and symplectic path $P$ such that
	$P(0)=P_M,P(1)=I_{2n}$. Then  $\Psi(t):=P(t)O_M$ is a symplectic path starting from $M$ and ending at $O_M$.
We can see that all eigenvalues of $\Psi(t)=P(t)O_M$ are of the form as $l(t)e^{i\theta}$ $(l(t)>0)$.
Then by \eqref{2.2},\eqref{2.6} and \eqref{2.7}, we obtain that $\Delta(\Psi)=0$.

	If the two endpoints $\Phi(0)$ and $\Phi(1)$ are diagonal symplectic matrices (or orthogonal symplectic matrices ), then there exists $\beta_1$ and $\beta_2$ as $\Psi$ above such that $-\beta_1 \# \Phi \# \beta_2$ is homotopy to $O$ with fixed endpoints. By Proposition \ref{Prop Delta} (2), (3) and $\Delta(\beta_1)=\Delta(\beta_2)=0$, we have
$$\Delta'(\Phi)=\Delta(O)=\Delta(-\beta_1 \# \Phi \# \beta_2)=\Delta(\Phi).$$
This means that these two rotation numbers are equivalent to each other at least for those symplectic paths with diagonal or orthogonal endpoints.

\end{remark}

\section{Review of various Maslov indices}\label{Sec-rev}

In this section we will introduce five versions of Maslov-type indices and show their main ideas of construction.
For $\lambda \in \mathbb{S}^1 \subset \C$, we use the following notations, which are first introduced by Long \cite{L' 1999}:
\begin{align}\label{3.1}	
	&Sp_{\lambda}(2n,\mathbb{R}):=\{M \in Sp(2n,\mathbb{R})| \bar{\lambda}^{n}\det(\lambda I-M) =0\},\\
	&Sp^{*}_{\lambda}(2n,\mathbb{R}):=\{M \in Sp(2n,\mathbb{R})| \bar{\lambda}^{n}\det(\lambda I-M) \not =0\},\\
	&Sp_{\lambda}^{\pm}(2n,\mathbb{R}):=\{M \in Sp(2n,\mathbb{R}) |\pm{(-1)^{n-1}}\bar{\lambda}^{n}\det(\lambda I-M) < 0\}.
\end{align}

\subsection{Conley-Zehnder-Long index}\label{rev CZL index}
The first kind of indices was studied by Conley and Zehnder\cite{CZ2,SZ} and Long\cite{L 1990,L0}, which originates from the study of periodic solutions of Hamiltonian Equations. Such solution generates a symplectic path as
\begin{align}\label{3.2}
	\Phi :[0,1] \to Sp(2n,\mathbb{R}),\ \Phi(0)=I.
\end{align}
The path as \eqref{3.2} is called the non-degenerate path if it satisfies $\det (I-\Phi(1)) \not=0$ and called the degenerate path if it satisfies $\det (I-\Phi(1)) =0$.
By Theorem \ref{Thm SZ}, the symplectic path $\Phi$ corresponds to a path $\rho(\Phi)$ on $\mathbb{S}^{1}$ and obtains a number $\Delta(\Phi)$ (\ref{2.7}) but not always an integer.
If $\Phi$ is a non-degenerate path, the idea of constructing the Conley-Zehnder index is that we need to
find a suitable extension $\gamma$ for $\Phi$ such that $\Delta(\Phi) + \Delta(\gamma)$ is an integer.

Since $W^{+}=-I$ and $\displaystyle W^{-}={\rm diag}\{2,-1,\cdots,-1,\frac12,-1,\cdots,-1\}$ are in the different connected components of $Sp_{1}^{*}(2n,\mathbb{R})$,
then we define the extension
\begin{align}\label{ext-gamma}
	\gamma :[0,1] \to Sp_{1}^{*}(2n,\mathbb{R}),\ \gamma(0)=\Phi(1),\ \gamma(1) \in \{W^{+}, W^{-}\}.
\end{align}
Thus we have the following definition \cite{CZ2}:

\begin{definition}[Conley-Zehnder index]\label{Def CZ}
	For any non-degenerate path $\Phi$, the Conley-Zehnder index for $\Phi$ is defined by
	\begin{align}\label{3.3}
		\mu_{CZ}(\Phi) = \Delta(\Phi) + \Delta(\gamma).
	\end{align}
\end{definition}
By Remark \ref{CZ rotation num}, the rotation number of different versions are equivalent for Conley-Zehnder index and we use the version defined by Salamon and Zehnder \cite{SZ}. According to the Lemma 3.2 of \cite{SZ} , every loop in $Sp^{*}_{1}(2n,\mathbb{R})$ is contractible. If we choose another extension $\gamma'$, by Proposition \ref{Prop Delta} (1), $\Delta(\gamma' \# -\gamma)=0$ and hence $\Delta(\gamma)=\Delta(\gamma')$. We see that $\Delta(\gamma)$ depends only on the terminal point $\Phi(1)$.
Then the index $\mu_{CZ}(\Phi)$ is independent of the choices of $\gamma$ so that it is well defined.
\begin{example} \label{Eg1}
	Let
	\begin{align*}
		\Phi(t) = \begin{pmatrix}
			\cos\frac{3\pi t}{2} & -\sin\frac{3\pi t}{2} \\
			\sin\frac{3\pi t}{2} & \cos\frac{3\pi t}{2} \\
		\end{pmatrix},\ 0 \leq t \leq 1.
	\end{align*}
	We can see $\Phi(1) \in Sp_{1}^{+}(2,\mathbb{R})$, choose the extension as
	\begin{align*}
		\gamma(t) =
		\begin{pmatrix}
			\cos\frac{3-t}{2}\pi & -\sin\frac{3-t}{2}\pi \\
			\sin\frac{3-t}{2}\pi & \cos\frac{3-t}{2}\pi \\	
		\end{pmatrix}		
	\end{align*}				
	By Theorem \ref{Thm SZ} (3), we have
	\begin{align*}
		&\rho(\Phi)=\det(\cos\frac{3\pi t}{2}+i\sin\frac{3\pi t}{2})=e^{\frac{3\pi ti}{2}}, \\
		&\rho(\gamma)=\det(\cos\frac{(3-t)\pi}{2}+i\sin\frac{(3-t)\pi}{2})= e^{\frac{(3-t)\pi i}{2}},
	\end{align*}
	then the Conley-Zehnder index
	\begin{align*}
		\mu_{CZ}(\Phi)=\Delta(\Phi)+\Delta(\gamma)=\frac{3}{2}+(-\frac{1}{2})=1.
	\end{align*}
\end{example}
\begin{remark}\label{remark}
	The definitions of Conley-Zehnder index can not apply directly  for those symplectic paths which do not start at identity. For example, let
	\begin{align*}
		\Phi(t) = \begin{pmatrix}
			\cos \pi (t+\frac{1}{2}) & -\sin \pi (t+\frac{1}{2}) \\
			\sin \pi (t+\frac{1}{2}) & \cos \pi (t+\frac{1}{2}) \\
		\end{pmatrix},\ 0 \leq t \leq 1.
	\end{align*}
	Since $\Phi(1) \in Sp^{+}_{1}(2,\mathbb{R})$, the end point of the extension is ${\rm diag}\{-2,-\frac{1}{2}\}$.
	If we construct this extension,  the``generalized Conley-Zehnder index" should be equal to $\frac{1}{2}$,
	which is not an integer. But in our definition (see \eqref{4.14}), it has the index $1$.	
\end{remark}

For degenerate paths (i.e. $\det(I-\Phi(1))=0$), Long\cite{L 1990,L0} used the method of rotational perturbation to deal with this case. To show more about this method, we introduce some notations and results from \cite{L0,L}. Firstly, for any two real matrices of the square block form
\begin{align*}
	M_1=\begin{pmatrix}
		A_1 & B_1\\
		C_1 & D_1
	\end{pmatrix}_{2j \times 2j},\ M_2=\begin{pmatrix}
		A_2 & B_2\\
		C_2 & D_2
	\end{pmatrix}_{2k \times 2k},
\end{align*}
we define their $ \diamond$-product by
\begin{align*}
	M_1 \diamond M_2=\begin{pmatrix}
		A_1 & 0 & B_1 & 0\\
		0 & A_2 & 0 & B_2\\
		C_1 & 0 & D_1 & 0\\
		0 & C_2 & 0 & D_2
	\end{pmatrix}_{2(j+k) \times 2(j+k)}
\end{align*} 
and denote by $M^{\diamond j}$ the $j$-fold $\diamond$-product $M \diamond \dots \diamond M$.
For $k=1$, we define the normal form of eigenvalue 1 as
	\begin{align}\label{N1}
		N_k(\bb)=N_1(b)=\begin{pmatrix}
			1 & b\\
			0 & 1
		\end{pmatrix},\  \bb=b=0, \ \pm{1}.
	\end{align}
For $ k \geq 2 $, the normal form is defined as
	\begin{align}\label{Nk}
		N_k(\bb)=\begin{pmatrix}
			A_k(1) & B_k(\bb)\\
			0 & C_k(1)
		\end{pmatrix},
	\end{align}
	where $ A_k(1) $ is a $ k\times k $ Jordan block of the eigenvalue $1$:
	\begin{align}\label{Ak}
		A_k(1)=\begin{pmatrix}
			1 & 1 & 0 & \cdots & 0 & 0\\
			0 & 1 & 1 & \cdots & 0 & 0\\
			\cdot & \cdot & \cdot & \cdots & \cdot & \cdot\\
			0 & 0 & 0 & \cdots & 1 & 1\\
			0 & 0 & 0 & \cdots & 0 & 1
		\end{pmatrix},
	\end{align}
	$ C_k(1) $ is a $ k\times k $ lower triangle matrix of the following form:
	\begin{align}\label{Ck}
		C_k(1)=\begin{pmatrix}
			1 & 0 & 0 & \cdots & 0 & 0\\
			-1 & 1 & 0 & \cdots & 0 & 0\\
			1 & -1 & 1 & \cdots & 0 & 0\\
			\cdot & \cdot & \cdot & \cdots & \cdot & \cdot\\
			\cdot & \cdot & \cdot & \cdots & \cdot & \cdot\\
			(-1)^{k-1} & (-1)^{k-2} & (-1)^{k-3} & \cdots & 1 & 0\\
			(-1)^{k} & (-1)^{k-1} & (-1)^{k-2} & \cdots & -1 & 1\\
		\end{pmatrix},
	\end{align}
	$ B_k(\bb) $ is a $ k\times k $ lower triangle matrix of the following form with $ \bb=(b_1,\cdots, b_k)\in\mathbb{R}^k $:
	\begin{align}\label{Bk}
		B_k(\bb)=\begin{pmatrix}
			b_1 & 0 & 0 & \cdots & 0 & 0\\
			b_2 & -b_2 & 0 & \cdots & 0 & 0\\
			b_3 & -b_3 & b_3 & \cdots & 0 & 0\\
			\cdot & \cdot & \cdot & \cdots & \cdot & \cdot\\
			\cdot & \cdot & \cdot & \cdots & \cdot & \cdot\\
			b_{k-1} & -b_{k-1} & b_{k-1} & \cdots &  (-1)^{k-2}b_{k-1} & 0\\
			b_{k} & -b_{k} & b_{k} & \cdots &  (-1)^{k-2}b_{k} & (-1)^{k-1}b_{k}\\
		\end{pmatrix}.
	\end{align}
For more details about the normal forms,  see Section 7 of \cite{L0} or Section 1.4 in Long's book \cite{L}. 
According to Theorem 7.3 of \cite{L0}, one has the following
\begin{proposition}\label{standard form}
	For any $M \in Sp_{1}(2n,\R)$, there exists $P \in Sp(2n,\R)$ such that
	\begin{align*}
			PMP^{-1} = N_{k_1}(\bb_1) \diamond N_{k_2}(\bb_2) \diamond \cdots \diamond N_{k_q}(\bb_q) \diamond M_0,
	\end{align*} 
where $q$ and $k_j$ are positive integers for $1 \leq j \leq q$, $M_0 \in Sp^{*}_{1}(2h,\R)$ with $h=n-\Sigma_{j=1}^{q} k_j$ and each $N_{k_j}(\bb_j)$ is the normal form of eigenvalue $1$ given by \eqref{N1} or \eqref{Nk}.
\end{proposition}

We can apply Proposition \ref{standard form} to the degenerate path $\Phi$ ($\Phi(1)$ has eigenvalue $1$), then  there exists $P \in Sp(2n,\R)$ such that
\begin{align*}
	P \Phi(1) P^{-1} = N_{k_1}(\bb_1) \diamond N_{k_2}(\bb_2) \diamond \cdots \diamond N_{k_q}(\bb_q) \diamond M_0.
\end{align*} 
For any $(s, t) \in [-1,1] \times [0,1]$, define the paths
\begin{align}\label{rotational perturbation}
	\Phi(s,t) = \Phi(t) P^{-1}(e^{s p(t)\theta_0 J_{k_1}} \diamond e^{s p(t)\theta_0 J_{k_2}} \diamond \cdots \diamond e^{s p(t)\theta_0 J_{k_q}} \diamond I_{2h}) P,
\end{align}  
where $\theta_0 > 0$, $p(t)=0$ for $0 \leq t \leq t_0 \leq 1$, $\dot{p}(t) \geq 0$ for $0 \leq t \leq 1$, $\dot{p}(1) = 0$, $p(1)=1$ and $J_k=
      \begin{pmatrix}
		0 & -I_k \\
		I_k & 0 \\
		\end{pmatrix}$
is the standard symplectic matrix. When $t_0$ is sufficiently close to $1$, $\Phi(s,t)$  converges to $\Phi(t)$ as $s \to 0$ and satisfies
\begin{equation*}
	\left\{
	\begin{array}{ll}
		\Phi(0,t)= \Phi(t), \\
		\Phi(s, t)= \Phi(t) \ \text{for} \ \forall t \in [0,t_0],\ s \in [-1,1] ,\\
		\Phi(s, t) \ \text{is sufficiently close to} \ \Phi(1) \ \text{for} \ \forall t \in [t_0,1],\ s \in [-1,1]. \\ 
	\end{array} \right.	
\end{equation*}
Fix $s \not=0$, $\Phi(s,\cdot)$ is a non-degenerate path which has the Conley-Zehnder index. Then the Long's Maslov-type index can be given as follows
\begin{definition}[Long] \label{Def L}
	For any degenerate path $\Phi$ and sufficiently small $s>0$, the Long index is defined by
	\begin{align}\label{def-muL}
		\mu_{L}(\Phi):  = \mu_{CZ}\big(\Phi(-s,\cdot)\big)  =\inf \{\mu_{CZ}(\Psi) |\ \Psi\ \mathrm{is\ close\  to}\ \Phi\},
	\end{align}
	where $\Psi$ is any non-degenerate symplectic path that is sufficiently close to $\Phi$.
\end{definition}
The above definition can be found in Corollary 6.1.12 and Definition 6.1.13 in Long's book \cite{L}, which shows that the two ways of definition above, $i.e.$ CZ-index of specific rotation perturbation and taking infimum in all CZ-indices of nearby non-degenerate paths, actually coincide.
This definition can be applied to all symplectic paths as \eqref{3.2} and we call it Conley-Zehnder-Long index. The construction of this index is based on the method of rotational perturbation of a symplectic path, which is then deformed into some non-degenerate paths with the well-defined Conley-Zehnder index. The Conley-Zehnder-Long index is then defined as the infimum of these Conley-Zehnder index. To illustrate this index, we present the following example:
\begin{example} \label{Eg L}
	\begin{align*}
		\Phi(t) = \begin{pmatrix}
			\cos 2\pi t & -\sin 2\pi t \\
			\sin 2\pi t & \cos 2\pi t \\
		\end{pmatrix},\ 0 \leq t \leq 1.
	\end{align*}
	Since $\Phi(1)=I_2$, then we can choose $P=I_2$ and obtain
	\begin{align*}
		\Phi(s, t) = \Phi(t)e^{sp(t)\theta J_1}
		=\begin{pmatrix}
			\cos (2\pi t + sp(t)\theta_0) & -\sin (2\pi t + sp(t)\theta_0) \\
			\sin (2\pi t + sp(t)\theta_0) & \cos (2\pi t + sp(t)\theta_0) \\
		\end{pmatrix},\ 0 \leq t \leq 1.
	\end{align*}
When $s>0$, the first kind eigenvalue of $\Psi(s,1)$ is positive, then $\mu_{CZ}(\Phi(s,\cdot))=3$. 
When $s<0$, the first kind eigenvalue of $\Psi(s,1)$ is negative, then $\mu_{CZ}(\Phi(s,\cdot))=1$. 
Then $\mu_{L}(\Phi) = \inf \{\mu_{CZ}(\Phi(s,\cdot))\}=1$.
\end{example}

\subsection{$L_0$-index}\label{rev L0 index}	
The second definition is $L_0$-index defined by Liu\cite{Liu1}, where $L_0=\{0\} \times \R^{n}$.
Let $\Phi$ be a symplectic path starting at identity $I$, denote it by
\begin{align*}
	\Phi(t) = \begin{pmatrix}
		S(t) & V(t) \\
		T(t) & U(t) \\
	\end{pmatrix},\ 0 \leq t \leq 1.
\end{align*}
The $n$ vectors from the column of $\begin{pmatrix}
	V(t) \\
	U(t)
\end{pmatrix}$ span a Lagrangian subspaces of $(\R^{2n}, \omega_0)$, which is $L_0$ when $t=0$. Denote the two connected components of $Sp(2n,\R)$ by
\begin{align*}
	Sp^{\pm}_{L_0}(2n,\R)=\{M \in Sp(2n,\R)) \ | \ \pm \det V_{M} >0\}
\end{align*}
for $M= \begin{pmatrix}
	S_{M} & V_{M} \\
	T_{M} & U_{M} \\
\end{pmatrix}$.
If $\Phi(1) \in Sp^{\pm}_{L_0}(2n,\R)$, then $\Phi$ is called a $L_0$-nondegenerate path. Otherwise, it is called an $L_0$-degenerate path.
To construct the $L_0$-index, Liu defined complex matrix function
\begin{align*}
	\bar{\rho}(\Phi(t))=[U(t)-iV(t)][U(t)+iV(t)]^{-1}.
\end{align*}
Then we can choose a continuous function $\bar{\Delta}(\Phi) : [0,1] \to \R$ such that
\begin{align}\label{3.5}
	\det \bar{\rho}(\Phi(t))=e^{2\pi i \bar{\Delta}(\Phi(t))}.
\end{align}
For any $L_0$-nondegenerate path $\Phi$, we consider the extensions of $\Phi$. Firstly, define
\begin{align*}
	\mathcal{E}_0(t)=I_{2n}\cos \frac{\pi (1-t)}{2} + J_n\sin \frac{\pi (1-t)}{2},\ 0 \leq t \leq 1
\end{align*}
as the extensions for $\Phi(0)$, where
\begin{align}\label{3.6}
	J_n=\begin{pmatrix}
		0 & -I_n \\
		I_n & 0 \\
	\end{pmatrix}.
\end{align}
As for $\Phi(1)$, if $\Phi(1) \in Sp^{+}_{L_0}(2n,\R)$, then we connect it to $J$ in $Sp^{+}_{L_0}(2n,\R)$. If $\Phi(1) \in Sp^{-}_{L_0}(2n,\R)$, then we connect it to $\begin{pmatrix}
	0 & D_n \\
	-D_n & 0 \\
\end{pmatrix}$
in $Sp^{-}_{L_0}(2n,\R)$, where $D_n$ is the diagonal matrix diag$\{-1,1,...,1\}$. Denote this extension for $\Phi(1)$ by $\mathcal{E}_1$, then the $L_0$-index for $L_0$-nondegenerate path is defined by
\begin{align}\label{3.7}
	i_{L_0}(\Phi)=\bar{\Delta}\big(\mathcal{E}_0\ \#\ \Phi\ \#\ \mathcal{E}_1(1)\big)-\bar{\Delta}\big(\mathcal{E}_0\ \#\ \Phi\ \#\ \mathcal{E}_1(0)\big).
\end{align}
For the symplectic paths $\Phi$ starting at $I$, the $L_0$-index is defined by
\begin{definition}\label{L0 index from I}
	\begin{align*}
		i_{L_0}(\Phi)= \inf \{i_{L_0}(\Psi) | \Psi \ \text{is  the $L_0$-nondegenerate path and sufficiently close to} \ \Phi \}.
	\end{align*}
\end{definition}
According to \cite{Liu1} Definition 4.3, the relationship between $i_{L_0}(\Phi)$ and $\mu_{L}(\Phi)$ is given by the concavity of $\Phi$ and denoted by
\begin{align}\label{3.9}
	c_{L_0}(\Phi)=\mu_{L}(\Phi) - i_{L_0}(\Phi)
\end{align}
By the Theorem 4.5 of \cite{Liu1}, the concavity only depends on the end point $\Phi(1)$ and the Lagrangian subspace $L_0$. We simply denote it by
\begin{align}\label{3.10}
	c(\Phi(1)):=c_{L_0}(\Phi)
\end{align}
and call it the $L_0$-concavity of $\Phi(1)$.

The idea of constructing $L_0$-index is similar to the Conley-Zehnder-Long index whose point of penetration is dealing with the symplectic paths starting at $I$.
Then the index of general symplectic paths (looked as symplectic path segments) can be defined by the index of these special symplectic paths.
Note that $L_0$-index depends on a Lagrangian subspace, for other Lagrangian subspace $L$, one can also define $L$-index \cite{Liu1}.

\subsection{The index of symplectic path segment (SPS index)}\label{rev SPS index}

Here we introduce the``SPS index"\cite{L0,L,Liu2} based on the Conley-Zehnder-Long index and Liu's $L_0$-index. Their definitions are
\begin{definition}\label{SPS index}
	For a general symplectic path $\Phi$, Long's SPS index is defined by
	\begin{align}\label{Long SPS}
		\hat{\mu}_{L}(\Phi)=\mu_{L}(\Phi' \# \Phi)-\mu_{L}(\Phi'),
	\end{align}
	Liu's SPS index is defined by
	\begin{align}\label{Liu SPS}
		\hat{i}_{L_0}(\Phi)=i_{L_0}(\Phi' \# \Phi)-i_{L_0}(\Phi'),
	\end{align}
	where $\Phi'$ is the symplectic path which starts at $I$ and ends at $\Phi(0)$, $\Phi' \# \Phi$ is the catenation of $\Phi'$ and $\Phi$ (see \eqref{catenation}), these indices are independent of the choices of $\Phi'$ and hence they are well defined \cite{L0,Liu2}.
\end{definition}

For a special constant identity path $\Phi(t)\equiv I_{2n}$, we have $\mu_{L}(I_{2n})=i_{L_0}(I_{2n})=-n$.
For a general symplectic path $\Phi$ starting from $\Phi(0)=I_{2n}$, it can not be regarded as only a standard symplectic path but also a symplectic path segment.
Both ordinary indices and SPS indices can be defined for it. Then the following equalities hold
\begin{align}\label{3.8} 	
	& \hat{\mu}_{L}(\Phi)=\mu_{L}(\Phi)+n,\\
	& \hat{i}_{L_0}(\Phi)=i_{L_0}(\Phi)+n.
\end{align}
\begin{remark}\label{rk L}	
	The Conley-Zehnder-Long index, Liu's $L_0$-index and their SPS index are different. For example, let
	\begin{align*}
		\Phi(t):=-\Phi_1(t)=I_{2n}\cos \frac{\pi t}{2} + J_n\sin \frac{\pi t}{2},\ 0 \leq t \leq 1.
	\end{align*}
	We have $i_{L_0}(\Phi)=0$ and $\mu_{L}(\Phi)=n$, then $\hat{i}_{L_0}(\Phi)=n$ and $\hat{\mu}_{L}(\Phi)=2n$.
\end{remark}
\begin{remark}\label{rk Cate}	
The Catenation axiom formulated in the Corollary 10 on page 148 of \cite{L} holds only for SPS index,
$e.g.$ for $\Phi \in C([a, c], Sp(2n,\R))$ with $a< b< c$, one has
$$\hat{\mu}_{L}(\Phi) = \hat{\mu}_{L}(\Phi |_{[a,b]}) + \hat{\mu}_{L}(\Phi |_{[b,c]}).$$
There is no such kind of Catenation axiom for indices $\mu_{L}(\Phi)$ and $i_{L_0}(\Phi)$
since they were not constructed before for those general symplectic paths.
\end{remark}

\subsection{Cappell-Lee-Miller index}\label{subsec-CLM}	
The fourth definition is one of the  geometrical definition of \cite{CLM} by Cappell, Lee and Miller. It is the geometric intersection number of a Lagrangian path and the 1-codimensional cycle of Lagrangian Grassmannian (i.e. the Maslov cycle), which bases on the definition of the proper paths' index $\mu_{proper}$ in \cite{GS} constructed by Guillemin and Sternberg.
When $(V,\omega)=(\mathbb{R}^{2n},\omega_0)$, denote the Lagrangian Grassmannian by $\mathscr{L}(n):=\mathscr{L}(V)$. let
\begin{align*}
	f(t)=(L_1(t),L_2(t)), \ 0 \leq t \leq 1
\end{align*}
be the pair of two smooth Lagrangian paths in $\mathscr{L}(n)$. $f(t)$ is called a proper path if
\begin{align*}
	L_1(t) \cap L_2(t) = \{0\}, \ t=0,1.
\end{align*}
For a general path, it can always become a proper path via perturbation. According to the Lemma 2.1 of \cite{CLM}, one has the following
\begin{proposition} \label{Pro CLM}
	Let $L, L' \in \mathscr{L}(n)$. Then $e^{\theta J_n}L' \in \mathscr{L}(n)$ for all $\theta$ and there exists an $\varepsilon$, $0<\varepsilon < \pi$, such that
	\begin{align*}
		L \cap e^{\theta J_n}L'=\{0\},\ \forall\ \theta\  \mathrm{with}\ 0< |\theta| < \varepsilon,
	\end{align*}
	where $J_n$ is given by \eqref{3.6}.
\end{proposition}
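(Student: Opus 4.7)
The plan is to handle the two assertions separately. The first claim, that $e^{\theta J}L'\in\mathscr{L}(n)$ for every real $\theta$, is a short computation showing that $e^{\theta J}$ is itself symplectic. The second claim, the existence of $\varepsilon$, reduces after identifying $(\mathbb{R}^{2n},J)\cong\mathbb{C}^{n}$ to the nonvanishing (and hence isolated zeros) of an explicit trigonometric polynomial in $\theta$.

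For the first part, I would observe that $J$ is skew-symmetric with $J^{2}=-I$, so $e^{\theta J}$ is orthogonal; and since $J=-J_{0}$ commutes with $J_{0}$, so does $e^{\theta J}$. Combining,
\[
(e^{\theta J})^{T}J_{0}\,e^{\theta J}=e^{-\theta J}\,J_{0}\,e^{\theta J}=J_{0},
\]
hence $e^{\theta J}\in Sp(2n,\mathbb{R})$. Symplectic isomorphisms preserve the Lagrangian Grassmannian, so $e^{\theta J}L'\in\mathscr{L}(n)$.

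For the second part, use $\mathscr{L}(n)=U(n)/O(n)$ to write $L=U_{1}L_{0}$ and $L'=U_{2}L_{0}$ with $U_{1},U_{2}\in U(n)$ and $L_{0}=\mathbb{R}^{n}\subset\mathbb{C}^{n}$ the standard real Lagrangian. Under the complex identification, $e^{\theta J}$ is multiplication by $e^{i\theta}$ and commutes with the complex-linear $U_{2}$, so
\[
L\cap e^{\theta J}L'=U_{1}\bigl(L_{0}\cap e^{i\theta}AL_{0}\bigr),\qquad A:=U_{1}^{-1}U_{2}\in U(n).
\]
A nonzero real $y\in L_{0}$ is sent into $L_{0}$ by $e^{i\theta}A$ precisely when $\mathrm{Im}(e^{i\theta}A)y=0$, so the intersection is nontrivial if and only if $\det\mathrm{Im}(e^{i\theta}A)=0$. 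Writing $\mathrm{Im}(e^{i\theta}A)=\tfrac{1}{2i}e^{-i\theta}(e^{2i\theta}A-\overline{A})$ and factoring out $A$, one obtains
\[
\det\mathrm{Im}(e^{i\theta}A)=\frac{e^{-in\theta}\det A}{(2i)^{n}}\prod_{k=1}^{n}(e^{2i\theta}-\mu_{k}),
\]
where $\mu_{1},\dots,\mu_{n}$ are the eigenvalues of $C:=A^{-1}\overline{A}$. Since $A$ is unitary we have $\det A\neq 0$, so this is a nontrivial trigonometric polynomial with at most finitely many zeros in $[0,2\pi)$. Thus $0$ is an isolated zero (if it is one at all), and any $\varepsilon>0$ smaller than $\pi$ and smaller than the distance from $0$ to the next zero works.

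The main obstacle, in my view, is establishing the nonvanishing of $\det\mathrm{Im}(e^{i\theta}A)$: a naive cofactor expansion in bases of $L$ and $L'$ produces a trigonometric polynomial whose top and bottom Fourier coefficients (corresponding respectively to $L\cap L'\neq\{0\}$ and $L\cap JL'\neq\{0\}$) can vanish simultaneously, so elementary inspection is awkward. Passing to the complex picture and extracting the explicit root structure above circumvents this difficulty cleanly.
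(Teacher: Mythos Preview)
The paper does not actually supply its own proof of this proposition: it is quoted verbatim as Lemma~2.1 of \cite{CLM} and left unproved. So there is nothing in the paper to compare your argument against at the level of details.

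That said, your proof is correct and self-contained. The first part is fine: $e^{\theta J}$ is orthogonal and commutes with $J_{0}=-J$, hence is symplectic. For the second part, your reduction via $\mathscr{L}(n)\cong U(n)/O(n)$ to the vanishing of $\det\mathrm{Im}(e^{i\theta}A)$ is exactly the standard device, and the factorization
\[
\det\mathrm{Im}(e^{i\theta}A)=\frac{e^{-in\theta}\det A}{(2i)^{n}}\prod_{k=1}^{n}(e^{2i\theta}-\mu_{k}),
\qquad \mu_{k}\in\sigma(A^{-1}\overline{A}),
\]
makes the finiteness of the zero set transparent, since $A$ is unitary (so $\det A\neq 0$) and $A^{-1}\overline{A}$ is unitary as well (so each $\mu_{k}\in\mathbb{S}^{1}$ gives at most two values of $\theta$ in $[0,2\pi)$). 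This is in fact essentially the argument Cappell--Lee--Miller give; your discussion of the ``main obstacle'' (that naive expansion may lose the top and bottom Fourier coefficients) is a nice explanation of why the eigenvalue factorization is the clean route.
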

One can define the Maslov cycle for  $L \in \mathscr{L}(n)$ as
\begin{align}\label{3.11}
	\Sigma(L)=\{L' \in \mathscr{L}(n) | \dim(L' \cap L) \geq 1\}.
\end{align}
Then there exists a  $\theta $ with $0< \theta < \varepsilon$ such that
\begin{align*}
	f_{\theta}(t)=(L_1(t),e^{-\theta J_n}L_2(t)), \ 0 \leq t \leq 1,
\end{align*}
is a proper path and $\ e^{-\theta J_n}L_2(t)$ intersects $\Sigma(L_1(t))$ only at points of the top smooth stratum and crosses them transversally.
Then one can count the geometric intersection number with sign of  $e^{-\theta J_n}L_2(t)$ and $\Sigma(L_1(t))$.  Then the definition of Cappell-Lee-Miller index  is 
\begin{definition}\label{Def-CLM}
	\begin{align*}
		\mu_{CLM}(f):= \mu_{proper}(f_{\theta}),
	\end{align*}
which is the geometric intersection number, counted with signs, of the perturbed path  $\ e^{-\theta J_n}L_2(t)$  with the top stratum of $\Sigma(L_1(t))$ \cite{GS}.
\end{definition}
This number with sign depends on the orientation of $e^{-\theta J_n}L_2(t)$, for example, one can suppose
\begin{align*}
	f(t)=\big(\mathbb{R}\{1\},\  \mathbb{R}\{e^{i(t-\frac{1}{2})}\}\big), \ 0 \leq t \leq 1.
\end{align*}
Then $\mu_{CLM}(f)$ is the intersection number of two Lagrangian paths and we have
\begin{align*}
	\mu_{CLM}(f([0,\frac{1}{2}]))=0, \ \ \   \mu_{CLM}(f([\frac{1}{2},1]))=1,\ \ \ \mu_{CLM}(f([0,1]))=1.
\end{align*}
This means that  the perturbation fixes an orientation of the crossing. In this example,  $\mathbb{R}\{e^{i(t-\frac{1}{2})}\}$ crossing $\mathbb{R}\{1\}$ anti-clockwisely will be counted with the positive sign.

According to the system of axioms of \cite{CLM}, $\mu_{CLM}(f)$ is of symplectic invariance, i.e, for a symplectic transformation
$\Phi$, $\mu_{CLM}(\Phi L_1,\Phi L_2)=\mu_{CLM}(f)$, which concludes that one can choose $\Phi$ such that $\Phi L_1=\mathbb{R}^{n} \times \{0\}$.
Then we can fix $L_1(t)=\mathbb{R}^{n} \times \{0\}$.
Set $$d=\dim(L_1(0) \cap L_2(0)), \ l=\dim(L_1(1) \cap L_2(1)).$$
Define two tails for $f(t)$ as following
\begin{align*}
	&f_1(t)=(\mathbb{R}^{n}, e^{\frac{i \pi}{4}(1-t)}\mathbb{R}^{d} \oplus \mathbb{R}^{n-d}), \ 0 \leq t \leq 1 ,\\
	&f_2(t)=(\mathbb{R}^{n}, e^{-\frac{i \pi t}{4}}\mathbb{R}^{l} \oplus \mathbb{R}^{n-l}), \ 0 \leq t \leq 1.
\end{align*}
Let
\begin{equation}\label{bar f}
	\bar{f}(t)) =f_1 \# f \# f_2 (t)= \left\{
	\begin{array}{ll}
		f_1(3t), & 0 \leq t < \frac{1}{3}, \\
		f(3t-1), & \frac{1}{3} \leq t < \frac{2}{3}, \\
		f_2(3t-2), & \frac{2}{3} \leq t \le 1,
	\end{array} \right.
\end{equation}
 and then $\bar{f}(t))$ is a proper path.
At the intersection time $t_j$, $\bar{f}(t)$ is locally isomorphic to one of the following two cases:
\begin{align*}
	(\mathbb{R}^{n}, e^{i(t-t_j)}\mathbb{R}^{1} \oplus \mathbb{R}^{n-1}) \ \text{or} \ (\mathbb{R}^{n}, e^{-i(t-t_j)}\mathbb{R}^{1} \oplus \mathbb{R}^{n-1}),\ |t-t_j|<\delta.
\end{align*}
Suppose that there are $p$ intersection points and $q$ intersection points of these two cases, then $\mu_{CLM}(\bar{f})= p-q$. By Definition \ref{Def-CLM}, we have $\mu_{CLM}(f_1)=-d$ and $\mu_{CLM}(f_2)=0$. According to the system of axioms by \cite{CLM}, $\mu_{CLM}(\bar{f})= \mu_{CLM}(f_1)+\mu_{CLM}(f)+\mu_{CLM}(f_2)$. Thus, we can obtain
\begin{align}\label{3.12}
	\mu_{CLM}(f)= d+p-q.
\end{align}
In addition, the Cappell-Lee-Miller index for Lagrangian path pairs can naturally induce an index for general symplectic paths as follows
\begin{definition}\label{Def-CLM-sp}
Let $\Phi$ be a  general symplectic path,  $L_1=\mathbb{R}^{n} \times \{0\}$ and $f(t)=(L_1, \Phi(t) L_1)$ the corresponding Lagrangian path pair.
Then the Cappell-Lee-Miller index for general symplectic path $\Phi$ is defined as
	\begin{align}\label{CLM-ind-path}
		\mu_{CLM}(\Phi):=\mu_{CLM}(f).
	\end{align}
\end{definition}

For $L_1=\mathbb{R}^{n} \times \{0\}$ and a given symplectic path $\Phi$, there always exists a special orthogonal symplectic path $O$ such that one can compute the index of $\Phi$ via $O$.
This is the following
\begin{proposition}\label{sp to osp}
	Fix $L_1=\mathbb{R}^{n} \times \{0\}$, for any symplectic path, written  as
	\begin{align*}
		\Phi(t)=\begin{pmatrix}
			S(t) & U(t) \\
			T(t) & V(t)
		\end{pmatrix},
	\end{align*}
	there exists a corresponding orthogonal symplectic path
	$$
    O(t)=\begin{pmatrix}
			X(t) & -Y(t) \\
			Y(t) & X(t)
		\end{pmatrix},
    $$
where
		$$X(t)={\rm diag} \{\cos \theta_1(t), \cos \theta_2(t), \cdots, \cos \theta_n(t)\}, $$
		$$Y(t)={\rm diag} \{\sin \theta_1(t), \sin \theta_2(t), \cdots, \sin \theta_n(t)\}$$
	such that
$$\mu_{CLM}(\Phi)=\mu_{CLM}(O).$$
	Those functions $\theta_i :[0,1] \to \R (i=1,2,\cdots,n)$ above are continuous and determined by $\Phi$ and $L_1$.
	Moreover, $O(t)L_1 \in \Sigma(L_1)$ if and only if $O(t)$ has eigenvalues equal to $1$ or $-1$, where $\Sigma(L_1)$ is the Maslov cycle (see \eqref{3.11}).
\end{proposition}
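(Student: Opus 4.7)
The plan is to translate the problem via the standard identification between orthogonal symplectic matrices and unitary matrices, build the angles $\theta_j(t)$ from the spectral decomposition of a canonical symmetric unitary associated with $\Phi(t)L_1$, and then match the Maslov cycle crossings of $O(t)L_1$ with those of $\Phi(t)L_1$.

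First I would set up the framework. The map $O=\begin{pmatrix} X & -Y \\ Y & X \end{pmatrix} \mapsto X+iY$ identifies the orthogonal symplectic group with $U(n)$ under $\R^{2n}\cong\C^n$, and in this language $L_1=\R^n\subset\C^n$. For any $U\in U(n)$ the Lagrangian $U\R^n$ depends only on the coset $UO(n,\R)$, and the Souriau map $UO(n,\R)\mapsto UU^T$ identifies $\mathscr{L}(n)$ with the space of complex symmetric unitaries. Under this identification $U\R^n\in\Sigma(L_1)$ iff $UU^T$ has $1$ as an eigenvalue, with multiplicity equal to $\dim(U\R^n\cap L_1)$.

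Second, I would produce $\theta_j(t)$ and $O(t)$. Lift the Lagrangian path $L_2(t)=\Phi(t)L_1$ continuously to $U_\Phi(t)\in U(n)$ using path-lifting in the fibration $U(n)\to U(n)/O(n,\R)=\mathscr{L}(n)$, and set $W(t):=U_\Phi(t)U_\Phi(t)^T$; this is a continuous path of symmetric unitaries independent of the chosen lift (two lifts differ by right multiplication by $A(t)\in O(n,\R)$ and $AA^T=I$). By the Autonne--Takagi decomposition for complex symmetric matrices, $W(t)=P(t)\,\mathrm{diag}(e^{2i\theta_j(t)})\,P(t)^T$ with $P(t)\in O(n,\R)$; a Rellich-type continuous selection of eigenvalues yields continuous phases $\theta_j:[0,1]\to\R$ (determined by $\Phi$ and $L_1$ up to the natural permutation and $\pi\Z$-ambiguity). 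Defining $X(t)=\mathrm{diag}(\cos\theta_j(t))$ and $Y(t)=\mathrm{diag}(\sin\theta_j(t))$, the matrix $O(t)$ corresponds under our identification to $\mathrm{diag}(e^{i\theta_j(t)})$, and a direct computation gives $O(t)L_1\mapsto \mathrm{diag}(e^{2i\theta_j(t)})$ under the Souriau map, which has the same spectrum as $W(t)$ for every $t$.

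Third, I would compare CLM indices and verify the last assertion. Because $W(t)$ and the Souriau image of $O(t)L_1$ share their spectra for every $t$, the two Lagrangian paths $L_2(t)$ and $O(t)L_1$ meet $\Sigma(L_1)$ at exactly the same times with the same intersection dimensions. The sign of a transversal crossing at $t_0$ is dictated by the direction in which the eigenvalue $1$ moves along $S^1$ as $t$ passes $t_0$, a purely spectral datum, so signed crossings contribute identically to both indices. The boundary terms $d,l$ in \eqref{3.12} and the tails $f_1,f_2$ from \eqref{bar f} depend only on the matching endpoint intersection dimensions, hence $\mu_{CLM}(\Phi)=\mu_{CLM}(O)$. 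Finally, viewing $O(t)$ as a $2n\times 2n$ real matrix its eigenvalues are $\cos\theta_j(t)\pm i\sin\theta_j(t)$, which equal $\pm 1$ iff $\sin\theta_j(t)=0$, giving exactly the condition $O(t)L_1\in\Sigma(L_1)$.

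The main obstacle I anticipate is securing a globally continuous choice of the $\theta_j(t)$ through eigenvalue-multiplicity collisions of $W(t)$ and through times when two branches must be rejoined in a consistent order. Rellich's theorem gives local continuous selection for self-adjoint (hence Hermitian) families, and one applies it to a local branch of $-\tfrac{i}{2}\log W(t)$; gluing these local choices into a single continuous labeling on $[0,1]$ requires a combinatorial argument across multiplicity events, together with the fact that the resulting $\mu_{CLM}(O)$ is invariant under permutation and integer $\pi$-shifts of the $\theta_j$'s, so the unavoidable labeling ambiguity does not affect the final integer.
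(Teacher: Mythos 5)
Your proposal is correct in outline, but it takes a genuinely different route from the paper's proof. The paper works with the Lagrangian frame $\left(\begin{smallmatrix} S(t) \\ T(t) \end{smallmatrix}\right)$ of $L_2(t)=\Phi(t)L_1$, diagonalizes the associated symmetric ``slope'' data by a real orthogonal $P(t)$, passes to a frame $\left(\begin{smallmatrix} I_n \\ \Lambda(t) \end{smallmatrix}\right)$ and sets $\tan(-\theta_j(t))=\lambda_j(t)$, so that (after the change of basis by $\mathrm{diag}(P,P)$, which fixes $L_1$ and the index) the path $O(t)L_1$ is identified with $L_2(t)$ and the equality of indices is immediate. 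You instead pass to the Souriau model, encode $L_2(t)$ by the symmetric unitary $W(t)=U_\Phi(t)U_\Phi(t)^T$, read the angles $2\theta_j(t)$ off its spectrum, and compare the two Lagrangian paths by a spectral-flow argument rather than by identifying them. Each approach buys something. The frame $\left(\begin{smallmatrix} I_n \\ \Lambda(t) \end{smallmatrix}\right)$ exists only where $S(t)$ is invertible, i.e.\ where $L_2(t)$ is transverse to $\{0\}\times\R^n$, and $\lambda_j(t)\to\infty$ at the degenerate times, so the paper's argument needs patching exactly there; your $W(t)$ is globally defined and continuous on all of $\mathscr{L}(n)$ and avoids this degeneracy, and you also sidestep the (genuinely delicate) continuity of the diagonalizing matrix by using $P(t)$ only pointwise. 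The price is that $O(t)L_1$ is no longer literally equal to $L_2(t)$, only conjugate to it by a $t$-dependent element of $O(n,\R)$, so you must justify that equality of Souriau spectra for every $t$ forces equality of the CLM indices; this is true and is essentially Cappell--Lee--Miller's own unitary/spectral-flow characterization of the index, but within this paper's conventions you should record that the perturbation $e^{-\theta J}$ acts on the Souriau image by the rotation $e^{-2i\theta}$ and that the geometric crossing sign equals the direction of motion of the relevant eigenvalue through $e^{2i\theta}$. The continuity of the branches $\theta_j(t)$, which you flag as the main obstacle, is the standard continuous-selection statement for roots of a monic polynomial with continuous coefficients on an interval (Rellich's analytic machinery is not needed when only continuity is wanted), followed by lifting each branch through the covering $\R\to \mathbb{S}^1$; the permutation and $\pi\Z$ ambiguities are harmless, as you note.
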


\begin{proof}
	Set $L_2(t)=\Phi(t)L_1=\{(S(t)x,T(t)x) \ | \ x \in \R^{n}\}$. By Definition \ref{Def symp.matrix}, $S(t)^{T}T(t)$ is the path of symmetric matrices. Then
	$\begin{pmatrix}
			S \\
			T
		\end{pmatrix} $
	is a Lagrangian frame (see \eqref{3.13} below). Since $\mu_{CLM}$ is the number of the geometric intersections, then it is independent of the choices of Lagrangian frames. We can choose a suitable Lagrangian frame to construct an orthogonal symplectic path.
	
	Let $V={\rm Ker} S$ and $W$ be the subspaces of $\R^n$ such that $\R^n = V \oplus W$. Since $\{(Sx,Tx) \ | \ x \in V\}$ is always tranversal to $L_1$ and $S|_{W}$ is an isomorphism, then $\mu_{CLM}$ is contributed by the case of the lower dimension unless $S$ is invertible. Thus, we suppose that $S$ is invertible. We can choose a Lagrangian frame $\begin{pmatrix}
		I_n \\
		F
	\end{pmatrix} $, where $F=TS^{-1}$ is a symmetric path, then $L_2=\{(x,Fx) \ | \ x \in \R^n\}$. There exists an orthogonal path $Q$ such that $Q^{-1}FQ$ is a diagonal path $\Lambda={\rm diag}\{\lambda_1,\lambda_2,\dots,\lambda_n\}$, then $PL_1=\{(z,0) \ | \ z \in \R^n \}$ and $PL_2=\{(y,\Lambda y) \ | \ y \in \R^n\}=\{(Xz,Yz) \ | \ z \in \R^n\}$, where $P={\rm diag}\{Q^{-1},Q^{-1}\}$ is a symplectic path, $y=Q^{-1}x, z=Dy$ and
  $$D={\rm diag} \{\sqrt{\lambda_1^2+1}, \sqrt{\lambda_2^2+1}, \cdots, \sqrt{\lambda_n^2+1}\}, $$
  $$X={\rm diag} \{\frac{1}{\sqrt{\lambda_1^2+1}}, \frac{1}{\sqrt{\lambda_2^2+1}}, \cdots, \frac{1}{\sqrt{\lambda_n^2+1}}\}, $$
  $$Y={\rm diag} \{\frac{\lambda_1}{\sqrt{\lambda_1^2+1}}, \frac{\lambda_2}{\sqrt{\lambda_2^2+1}}, \cdots, \frac{\lambda_n}{\sqrt{\lambda_n^2+1}}\}.$$
  Let $\cos \theta_j=\frac{1}{\sqrt{\lambda_j^2+1}}$ and $\sin \theta_j=\frac{\lambda_j}{\sqrt{\lambda_j^2+1}}$, then we have $PL_2=O(PL_1)$. Since $\mu_{CLM}$ is a symplectic invariance and by Definition \ref{Def-CLM-sp}, we have
	\begin{align*}
		\mu_{CLM}(\Phi)=\mu_{CLM}(L_1,L_2)=\mu_{CLM}(PL_1,PL_2)=\mu_{CLM}(O)=\mu_{CLM}(L_1,OL_1).
	\end{align*}
	In addition, $O(t)L_1 \in \Sigma(L_1)$ if and only if $Y(t)x=0$ has at least $1$-dimensional solution space, if and only if $\det Y(t) = \prod_{j=1}^{n} \sin \theta_j(t) = 0$. It is equivalent to $\theta_j(t)=k \pi$ for $k \in \mathbb{Z}$ and some $j$, which means that $O(t)$ has eigenvalues equal to $1$ or $-1$. This completes the proof.
\end{proof}

\subsection{Robbin-Salamon index}	
The fifth version of definition is given by Robbin and Salamon \cite{RS1}, which defines the the Maslov index of the pair of Lagrangian paths via the crossing form and uses it to construct the Maslov index for symplectic paths.

Let $L_1(t), L_2(t) \in \mathscr{L}(n)$ be two smooth paths and we first consider $L_2(t)$ is a constant path, denoted by $L_2$. The Langrangian complement of $L_1(t)$ is denoted by $L_1^{c}(t)$ (i.e. $L_1 \oplus L_1^{c}=\mathbb{R}^{2n}$ for $\forall t$). By Theorem 1.1 of \cite{RS1}, for $\forall\ v \in L_1(t_0)$ and $t$ close to $t_0$, choose $l(t) \in L_1^{c}(t_0)$  such that $v+l(t) \in L_1(t)$. Then the form
\begin{align*}
	Q(v):= \frac{d}{dt}\Big|_{t=t_0} \omega_0(v,l(t))
\end{align*}
is well defined. We can express this form by Lagrangian frame of $L \in \mathscr{L}(n)$, which is an injective linear map $Z: \mathbb{R}^{n} \to \mathbb{R}^{2n}$ whose image is $L$.
It has the form as
\begin{align}\label{3.13}
	Z=	\begin{pmatrix}
		X \\
		Y
	\end{pmatrix} ,\
	X^{T}Y = Y^{T}X,
\end{align}
where $X,Y$ are $n \times n$ matrices. Let $Z(t)$ be the Lagrangian frame of $L_1(t)$, then the form
\begin{align}\label{3.14}
	Q(v)= \langle X(t_0)u, \dot{Y}(t_0)u \rangle - \langle Y(t_0)u, \dot{X}(t_0)u \rangle,
\end{align}
where $\langle . \ , \ . \rangle$ is the Euclidean inner product, $\dot{X}(t_0),\dot{Y}(t_0)$ are the differential on $t_0$ and $v=Z(t_0)u$.
Then the crossing form is defined by
\begin{align}\label{3.15}
	\Gamma(L_1,L_2,t)(v)=Q(v)|_{L_1(t) \cap L_2}.
\end{align}
The crossing form is a quadratic form and  $\Gamma(L_1,L_2,t)(v)=0$ holds unless at the time $t$ when $L_1(t)$ crosses the Maslov cycle $\Sigma(L_2)$ transversely (regular crossings). Then the index of $(L_1(t),L_2)$ having only regular crossings is defined as
\begin{align}\label{3.16}
	\mu_{RS}(L_1,L_2)=\frac{1}{2}\sum_{t=0,1}Sign \Gamma(L_1,L_2,t) + \sum_{0<t<1}Sign \Gamma(L_1,L_2,t),
\end{align}
where $Sign \Gamma(L_1,L_2,t)$ is the difference of the positive exponential inertial and the negative exponential inertial of $\Gamma(L_1(t),L_2,t)(v)$. More generally, $\mu_{RS}(L_1,L_2)$ can be defined for continuous path. Every continuous path is homotopic with fixed endpoints to one having only regular crossings and then they have the same Maslov index.
For a symplectic path $\Phi : [0,1] \to Sp(2n,\mathbb{R})$, let $L_2=\{0\} \times \mathbb{R}^{n}$ and $L_1(t)=\Phi(t)L_2$, the Maslov index for $\Phi$ is
\begin{align}\label{3.17}
	\mu_{RS}(\Phi):=\mu_{RS}(L_1,L_2).
\end{align}
For a general path
\begin{align*}
	f(t)=(L_1(t),L_2(t)), \ 0 \leq t \leq 1,
\end{align*}
one can define the relative crossing form
\begin{align}\label{3.18}
	\Gamma{'}(f,t)=\Gamma(L_1,L_2(t),t)-\Gamma(L_2,L_1(t),t).
\end{align}
Then the relative Maslov index is similarly defined by
\begin{align}\label{3.19}
	\mu'_{RS}(f)=\frac{1}{2}\sum_{t=0,1}Sign \Gamma{'}(f,t) + \sum_{0<t<1}Sign \Gamma{'}(f,t).
\end{align}
By the Theorem 2.3 of \cite{RS1}, $\mu_{RS}(L_1,L_2)$ and $\mu{'}_{RS}(L_1,L_2)$ are of symplectic invariance and they also satisfy the system of axioms of \cite{CLM}.
\begin{remark}\label{CZ & RS}
	According to \cite{RS1}, for any non-degenerate symplectic path $\Phi \in \mathcal{P}(2n,\R)$ with $\Phi(0)=I_{2n}$, the Conley-Zehnder index and the Robbin-Salamon index have the relationship
	\begin{align*}
		\mu_{CZ}(\Phi)=\mu_{RS}\big(Gr(\Phi),Gr(I_{2n})\big),
	\end{align*}
	where $Gr(M):=\{(x,Mx) \ | x \in \R^{2n}\}$ is the graph of $M \in Sp(2n,\R)$, which is viewed as a Lagrangian subspace of $(\R^{2n} \times \R^{2n}, -\omega_0 \oplus \omega_0 )$.
	This is a realization of Conley-Zehnder index for non-degenerate symplectic paths from the point of view of Robbin-Salamon index for Lagrangian paths.
\end{remark}

\section{The construction of the Maslov type index}\label{Sec-Mas-def}

In this section, we combine the methods of constructing  $\mu_{CZ}$, $\mu_{L}$ and $\mu_{CLM}$ and
use the perturbation and extension argument to define an index in a consistent way, no matter whether the starting point of the path is identity or not.\\

\noindent{\bf Orthogonalization}

We first consider the orthogonalization for a path $\Phi \in \mathcal{P}(2n,\mathbb{R})$, since it is much easier to do calculation and operation for  orthogonal symplectic matrices. 
The idea of orthogonalization is that we can extend the endpoints to the orthogonal symplectic matrices meanwhile keeping the rotation number  invariant.
According to \cite{CZ2}, a symplectic matrix $M$ can be represented as the polar form
\begin{align}\label{4.2}
	M=PO,
\end{align}
where $P=(MM^T)^{1/2}$ is a positive definite symmetric and symplectic matrix and $O=P^{-1}M$ is an orthogonal symplectic matrix. For $\Phi \in \mathcal{P}(2n,\mathbb{R})$, by the representation above, we set
\begin{align*}
	\Phi(0)=P_{1}O_{1}, \ \Phi(1)=P_{2}O_{2}.
\end{align*}
Since the set of positive definite symplectic and symmetric matrices is contractible, we can find the positive definite symplectic and symmetric paths $P_{1}(t), P_{2}(t)$ such that
\begin{align*}
	P_{1}(0)=P_{1}, \ P_{2}(0)=P_{2}, \ P_{1}(1)=P_{2}(1)=I
\end{align*}
and define $\beta_1(t)=P_{1}(1-t)O_{1},\beta_2(t)=P_{2}(t)O_{2}$. 

If $\Phi(0)$ and $\Phi(1)$ are block diagonal and orthogonal symplectic matrices (or diagonal matrices), by Remark \ref{CZ rotation num}, we have
\begin{align}\label{rot-num hold}
	\Delta(\beta_1)=\Delta(\beta_2)=0.
\end{align}
This means that we can add the two tails $\beta_1$ and $\beta_2$ to $\Phi$ with these special endpoints and it will not change the rotation number.

In general case, by calculation on some simple example, one can see that the rotation number of such a path of polar form $P(t)O$ might not vanish.
Nevertheless, we can still choose two tails by the method of normalization of eigenvalues instead of using the polar form. We will do this as follows.
Suppose that $\Phi(0)$ and $\Phi(1)$ are arbitrary symmetric matrices, we can consider the  normalization of eigenvalues of $\Phi(0)$ and $\Phi(1)$.
For any symplectic matrix $M$, denote by $\sigma(M)=\{l_j e^{i \theta_j} \ | \ l_j>0, \theta_j \leq \theta_{j+1}, j=1,2,\dots,2n \}$ the set of all eigenvalues of $M$.
We want to construct a block diagonal and orthogonal symplectic matrix whose first kind eigenvalues on $\mathbb{S}^1$ are the same as $M$'s.
Denote the set of these first kind eigenvalues by $ \{e^{i \theta_j} \ | \ \theta_j \leq \theta_{j+1}, j=1,2,\dots,n  \}$. Each first kind eigenvalue $e^{i \theta_j}$ can correspond to the symplectic matrix
	\begin{align*}
		O_j=\begin{pmatrix}
			\cos \theta_j & -\sin \theta_j \\
			\sin \theta_j & \cos \theta_j
		\end{pmatrix} \in Sp(2,\mathbb{R}), \ j=1, 2, \dots, n.
	\end{align*}
	Then we construct a unique block diagonal and orthogonal symplectic matrix from $M$ as
	\begin{align}\label{4.2}\small
		O=O_1 \diamond O_2 \diamond \cdots \diamond O_n \in Sp(2n,\mathbb{R}).
	\end{align}
	We can choose any symplectic path $\Psi(t)$ such that it starts at $M$ and ends at $O$. Since each first kind eigenvalue starts at $l_j e^{i\theta_j} (l>0)$ and ends at $e^{i\theta_j}$,
by the definition of $\rho$ (see \eqref{2.1}\eqref{2.2}) and \eqref{2.6}, at two end points of $\Psi$, we have
$$e^{i\pi \alpha(0)}=\rho(\Psi(0))=\rho(\Psi(1))=e^{i\pi \alpha(1)}.$$
Then $\Delta(\Psi)=\alpha(1)-\alpha(0)=2k$ $(k \in \mathbb{Z})$. If $k\neq 0$, $\Psi$ is the path we desired to achieve the normalization of eigenvalues.
Otherwise, one can construct a loop as
	$$\small	
	\Psi'(t)=O'_1(t) \diamond O_2 \diamond \cdots \diamond O_n, \ O'_1(t)=\begin{pmatrix}
		\cos (\theta_1-2k\pi t) & -\sin (\theta_1-2k\pi t)\\
		\sin (\theta_1-2k\pi t) & \cos (\theta_1-2k\pi t)
	\end{pmatrix}	 .\vspace{5mm}
	$$
Then the catenation $\Psi \# \Psi'$ also starts at $M$ and ends at $O$, and satisfies $\Delta(\Psi \# \Psi')=2k-2k=0$. For $\Phi(0)$ and $\Phi(1)$, they are corresponding to two  block diagonal and orthogonal symplectic matrices $O_1$ and $O_2$. By the discussion above, we can choose the tails of $\Phi$ as $\beta_j:[0,1] \to Sp(2n,\mathbb{R}), \  j=1,2$ such that
	\begin{align*}
		\beta_1(0)=O_1,\ \beta_1(1)=\Phi(0),\ \beta_2(0)=\Phi(1),\ \beta_2(1)=O_2,
	\end{align*}
	satisfying \eqref{rot-num hold} $i.e.$ $\Delta(\beta_j)=0$. This means that we can also add the two tails $\beta_1$ and $\beta_2$ to $\Phi$ with the general endpoints and does not change the rotation number.

Then we define
\begin{equation}\label{4.3}
	\Phi^{\#}(t) := \left\{
	\begin{array}{ll}
		\beta_1(3t), & 0 \leq t < \frac{1}{3}, \\
		\Phi(3t-1), & \frac{1}{3} \leq t < \frac{2}{3}, \\
		\beta_2(3t-2), & \frac{2}{3} \leq t \le 1,
	\end{array} \right. 	
\end{equation}
and we call \eqref{4.3} the {\bf orthogonalization} of $\Phi$ at the two endpoints. By the construction as above, we have the following
\begin{lemma}\label{lemma orthogonalization}
	For $\Phi \in \mathcal{P}(2n,\mathbb{R})$, the rotation number \eqref{2.7} is independent of the choices of orthogonalizations, i.e., $\Delta(\Phi^{\#})=\Delta(\Phi)$.
\end{lemma}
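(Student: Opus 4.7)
The plan is to apply catenation additivity of the rotation number directly to the three-piece path \eqref{4.3}. Using Proposition \ref{Prop Delta} (2) twice (together with the obvious fact that $\Delta$ is invariant under affine reparametrization of $[0,1]$), I obtain
\begin{align*}
\Delta(\Phi^{\#}) \;=\; \Delta(\beta_1) + \Delta(\Phi) + \Delta(\beta_2).
\end{align*}
Hence the whole lemma reduces to the single claim $\Delta(\beta_1) = \Delta(\beta_2) = 0$ for any admissible pair of orthogonalization tails; once this is established, both the equality $\Delta(\Phi^{\#}) = \Delta(\Phi)$ and the independence from the choice of orthogonalization follow simultaneously.

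To verify the vanishing, I would follow the two-stage construction of $\beta_j$ laid out in the paragraph preceding \eqref{4.3}. In the first stage, one picks an arbitrary continuous symplectic path $\Psi$ connecting the endpoint of $\Phi$ to the normalized block diagonal orthogonal symplectic matrix $O_j$ of \eqref{4.2}. In the second stage, one catenates with the explicit loop $\Psi'$ displayed just before \eqref{4.3}, whose role is precisely to kill the rotation number accumulated by $\Psi$. Writing $\Delta(\Psi) = 2k$ and using the fact that $\Psi'$ is a block diagonal rotation loop with $\Delta(\Psi') = -2k$, the additivity \eqref{2.9} gives $\Delta(\beta_j) = \Delta(\Psi) + \Delta(\Psi') = 0$. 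In the special case where $\Phi(0), \Phi(1)$ are already block diagonal orthogonal symplectic (or diagonal) matrices, Remark \ref{CZ rotation num} gives the same conclusion more directly via the polar form and contractibility of the set of positive definite symmetric symplectic matrices.

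The step I expect to be the main obstacle is the verification that $\Delta(\Psi)$ is always an even integer $2k$, so that the correcting loop $\Psi'$ is always available to cancel it. I would argue this by inspecting the endpoints through the formula \eqref{2.2}: the radial normalization sending $l_j e^{i\theta_j}$ to $e^{i\theta_j}$ preserves all the arguments $\theta_j$ and the parity count $m_0$ of negative real eigenvalue pairs, so $\rho(\Psi(0)) = \rho(\Psi(1))$. By \eqref{2.6}, this forces $e^{i\pi(\alpha(1)-\alpha(0))} = 1$, i.e., $\Delta(\Psi) = \alpha(1) - \alpha(0) \in 2\mathbb{Z}$. This is where one must be careful about eigenvalues crossing $\mathbb{S}^{1}$ along $\Psi$ and about how first-kind eigenvalues are selected consistently along the path; the argument should be delivered either by working entirely within the open dense subset where the eigenvalue structure is locally constant and then extending by continuity, or equivalently by choosing $\Psi$ to keep the angular data fixed and only rescale radii.

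With $\Delta(\beta_1) = \Delta(\beta_2) = 0$ established for any admissible construction, the additivity formula above instantly gives $\Delta(\Phi^{\#}) = \Delta(\Phi)$, completing the proof and proving independence of the choices of orthogonalization.
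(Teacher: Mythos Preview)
Your proposal is correct and follows essentially the same approach as the paper: both reduce the lemma to catenation additivity \eqref{2.9} together with the vanishing $\Delta(\beta_1)=\Delta(\beta_2)=0$, which the paper establishes in the construction immediately preceding the lemma (via the $\Psi \# \Psi'$ argument you recapitulate). The paper's proof is a single line because the vanishing of $\Delta(\beta_j)$ is treated as already known from that preceding discussion; your more detailed justification of the even-integer claim $\Delta(\Psi)\in 2\mathbb{Z}$ from $\rho(\Psi(0))=\rho(\Psi(1))$ matches the paper's reasoning exactly.
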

\begin{proof}
	By \eqref{2.9} and $\Delta(\beta_1)=\Delta(\beta_2)=0$, the rotation number
	\begin{align}\label{orthogonal num hold}
		\Delta(\Phi^{\#})=\Delta(\beta_1)+\Delta(\Phi)+\Delta(\beta_2)=\Delta(\Phi),
	\end{align}
	which is determined by $\Phi$ and means the rotation number is independent of the choices of orthogonalizations.
\end{proof}
 After the orthogonalization, $\Phi^{\#}(0)$ and $\Phi^{\#}(1)$ are orthogonal and symplectic matrices, then all eigenvalues of $\Phi^{\#}(0)$ and $\Phi^{\#}(1)$ lie on the unit circle $\mathbb{S}^{1}\subset \C$.
 In particular, if $\Phi(0)$ and $\Phi(1)$ are already orthogonal symplectic matrices, then one can just set the orthogonalization as
\begin{equation}\label{4.4}
	\Phi^{\#}(t) := \left\{
	\begin{array}{ll}
		\Phi(0), & 0 \leq t < \frac{1}{3}, \\
		\Phi(3t-1), & \frac{1}{3} \leq t < \frac{2}{3}, \\
		\Phi(1), & \frac{2}{3} \leq t \le 1.
	\end{array} \right.
\end{equation}
As Remark \ref{rk cycle} claims, we define the degenerate cycle as
\begin{align}\label{4.5}
	\mathscr{C}(2n,\mathbb{R}) = Sp_{1}(2n,\mathbb{R}) \cup Sp_{-1}(2n,\mathbb{R}).
\end{align}
The motivation of considering such degenerate cycle is that we want to explain the the index of Lagrangian pairs (like Capell-Lee-Miller index) from the point of view of symplectic paths.
Based on such a consideration, we obtain the result of Theorem \ref{Thm5}. The principle of this result is that the cycle $\mathscr{C}(2n,\mathbb{R})$ can correspond to the Maslov cycle (see \eqref{3.11}).
If $O(t)$ is an orthogonal symplectic path as in Proposition \ref{sp to osp} and $L_1 = \mathbb{R}^{n} \times \{0\}$, then $O \in \mathscr{C}(2n,\mathbb{R})$ if and only if $OL_1 \in \Sigma(L)$,
where $\Sigma(L_1)$ is the Maslov cycle of $L_1$. This means that Capell-Lee-Miller index can be explained as  a special case (orthogonal symplectic path) of our definition.
\begin{remark}\label{rk cycle & Maslov cycle}
	Given an orthogonal symplectic path $O$ and $L_1 = \mathbb{R}^{n} \times \{0\}$, then it is corresponding to a Lagrangian pair $(L_1,OL_1)$. Conversely, for any $\Phi \in \mathcal{P}(2n,\mathbb{R})$ and $L_1 = \mathbb{R}^{n} \times \{0\}$, by Proposition \ref{sp to osp}, there exists a corresponding orthogonal symplectic path $O$. This means that any Lagrangian pair  $(L_1,\Phi L_1)$ can correspond to an orthogonal symplectic path $O$. In addition, if we only define the cycle as the component of eigenvalue $1$ (i.e. $Sp_1(2n,\R)$), for $\Phi \in \mathcal{P}(2n,\mathbb{R})$, we can consider the graph $Gr(\Phi)=\{(x,\Phi x) \ | x \in \R^{2n}\}$ as a Lagrangian subspace of $(\R^{2n} \times \R^{2n}, -\omega_0 \oplus \omega_0)$, then $\Phi(t) \in Sp_1(2n,\R)$ if and only if $Gr(\Phi(t)) \in \Sigma(Gr(I))$, this means that any $\Phi \in \mathcal{P}(2n,\mathbb{R})$ can correspond to a Lagrangian pair in higher dimension. But for any Lagrangian pair $(L_1,L_2)$ which are the Lagrangian subspaces of $(\R^{4k+2},\omega_0) (k \in \mathbb{Z})$, it can not be converted into symplectic paths because it requires that these symplectic paths take values in ``$Sp(2k+1,\R)$", which is impossible.
\end{remark}

According to the Lemma 3.2 of \cite{SZ}, $Sp^{*}_{1}(2n,\mathbb{R})$ (see \eqref{3.1}) has two connected components $Sp^{+}_{1}(2n,\mathbb{R})$ and $Sp^{-}_{1}(2n,\mathbb{R})$. Then $Sp^{*}_{-1}(2n,\mathbb{R})$ also has two connected components $Sp^{+}_{-1}(2n,\mathbb{R})$ and $Sp^{-}_{-1}(2n,\mathbb{R})$ because $A \mapsto -A$ is a homeomorphism between $Sp^{*}_{1}(2n,\mathbb{R})$ and $Sp^{*}_{-1}(2n,\mathbb{R})$. We define
\begin{align}\label{4.6}
	&\mathscr{N}(2n,\mathbb{R}):=Sp^{+}_{1}(2n,\mathbb{R}) \cap Sp^{+}_{-1}(2n,\mathbb{R}).
\end{align}
Denote by $\sigma_1(M)$ the set of the first kind eigenvalues of $M$, the two connected components of $\mathscr{N}(2n,\mathbb{R})$ can be denoted by
\begin{align}\label{4.7}
	&\mathscr{N}^{+}(2n,\mathbb{R})=\{M \in \mathscr{N}(2n,\mathbb{R}) \ | \  \prod_{\lambda \in \sigma_1(M) \cap \mathbb{S}^{1} \backslash \{\pm1\}} Im \lambda >0 \}, \\
	&\mathscr{N}^{-}(2n,\mathbb{R})=\{M \in \mathscr{N}(2n,\mathbb{R}) \ | \  \prod_{\lambda \in \sigma_1(M) \cap \mathbb{S}^{1} \backslash \{\pm1\}} Im \lambda <0 \}.\label{4.99}
\end{align}
We emphasize that these two connected components are different and $\mathscr{N}^{+}(2n,\mathbb{R}) \cap \mathscr{N}^{-}(2n,\mathbb{R})=\varnothing$. That is because the imaginary parts of the first kind eigenvalues of $M$ are determined by itself and the product of these imaginary parts can not have different signs.

\vspace{6mm}

\noindent{\bf Global perturbation}

For any $\Phi \in \mathcal{P}(2n,\mathbb{R})$, after orthogonalization we obtain a path $\Phi^{\#}$ (see \eqref{4.4}).
In order to deal with the issue that  some end point of $\Phi^{\#}$ might be on the degenerate cycle $\mathscr{C}(2n,\mathbb{R})$ \eqref{4.5}, 
we need the following 
\begin{lemma}\label{Lemma1}
Given a $\Phi \in \mathcal{P}(2n,\mathbb{R})$, for its orthogonalization $\Phi^{\#}$, there exists a sufficiently small $\theta \geq 0$ such that both two end points of perturbed path $e^{-\theta J_n}\Phi^{\#}$ are not on the cycle $\mathscr{C}(2n,\mathbb{R})$, i.e. both $e^{-\theta J_n}\Phi^{\#}(0)$ and $e^{-\theta J_n}\Phi^{\#}(1)$ have no eigenvalues equal to $\pm{1}$. 	
\end{lemma}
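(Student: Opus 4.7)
The plan is to establish that the map $\theta \mapsto e^{-\theta J}\Phi(j)$ meets the cycle $\mathscr{C}(2n,\mathbb{R})$ only at isolated times, so arbitrarily small $\theta\geq 0$ avoid the cycle at both endpoints. For each $j=0,1$, set
\begin{equation*}
f_j(\theta) := \det\!\bigl(I - e^{-\theta J}\Phi(j)\bigr)\,\det\!\bigl(I + e^{-\theta J}\Phi(j)\bigr),
\end{equation*}
which vanishes exactly when $e^{-\theta J}\Phi(j)\in\mathscr{C}(2n,\mathbb{R})$. Since $e^{-\theta J}=\cos\theta\cdot I-\sin\theta\cdot J$, each $f_j$ is a real polynomial in $\cos\theta,\sin\theta$, hence a real-analytic trigonometric polynomial. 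If both $f_0$ and $f_1$ are not identically zero, each has only finitely many zeros per period, so the complement of their combined zero set is open and dense, and arbitrarily small positive $\theta$ lies in it. The lemma thus reduces to the following claim: for every $A\in Sp(2n,\mathbb{R})$, $f_A(\theta):=\det(I-e^{-\theta J}A)\det(I+e^{-\theta J}A)\not\equiv 0$.

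I would prove this claim by contradiction. The ring of real-analytic functions on an interval is an integral domain, so $f_A\equiv 0$ forces one of the two factors to vanish identically; the $(I+\cdot)$ case reduces to the $(I-\cdot)$ case via the identity $-e^{-\theta J}=e^{-(\theta-\pi)J}$, so assume $\det(I-e^{-\theta J}A)\equiv 0$. Then $B(\theta):=e^{-\theta J}A$ has $1$ as an eigenvalue for every $\theta$. On the dense open set of $\theta$ where the algebraic multiplicity of this eigenvalue is locally constant, Kato's analytic perturbation theory supplies a nonvanishing analytic branch $v(\theta)$ with $B(\theta)v(\theta)=v(\theta)$. Differentiating this identity and using $B'(\theta)=-JB(\theta)$ gives
\begin{equation*}
\bigl(B(\theta)-I\bigr)v'(\theta)=J\,v(\theta),
\end{equation*}
so $Jv(\theta)\in\mathrm{im}(B(\theta)-I)$. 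The symplectic relation $B^{T}JB=J$ yields $B^{T}=JB^{-1}J^{-1}$, hence $\ker(B^{T}-I)=J\ker(B-I)$, so $\mathrm{im}(B-I)=\bigl(J\ker(B-I)\bigr)^{\perp}$. Using the orthogonality of $J$, the condition $Jv(\theta)\in\mathrm{im}(B(\theta)-I)$ becomes $v(\theta)\perp\ker(B(\theta)-I)$; but $v(\theta)$ itself belongs to $\ker(B(\theta)-I)$, forcing $v(\theta)=0$, a contradiction.

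The main obstacle I anticipate is producing the analytic eigenvector branch $v(\theta)$. This is routine via Kato's theorem at any regular $\theta_0$ where the multiplicity of $1$ as an eigenvalue of $B$ does not jump; since the derivation above needs only a single such $\theta_0$, no delicate analysis of the exceptional times is required. The remainder is a short symplectic linear-algebra calculation resting on the identity $\ker(B^{T}-I)=J\ker(B-I)$.
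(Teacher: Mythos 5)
Your proof is correct, and it takes a genuinely different route from the paper's. The paper argues via Proposition \ref{Pro CLM} (Cappell--Lee--Miller's transversality of $L$ and $e^{\theta J}L'$ for all small $\theta\neq 0$), passing from $L_1\cap e^{-\theta J}\Phi(j)L_1=\{0\}$ to the absence of eigenvalues $\pm 1$; that last implication is valid for the block-diagonal orthogonal endpoints produced by the orthogonalization (this is the final statement of Proposition \ref{sp to osp}), which is the situation in which the lemma is actually applied, but it fails for arbitrary symplectic matrices (the shear $\left(\begin{smallmatrix}1&0\\ c&1\end{smallmatrix}\right)$ has eigenvalue $1$ yet maps $\R\times\{0\}$ off itself). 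Your argument --- real-analyticity of $\theta\mapsto\det(I\mp e^{-\theta J}A)$ together with the key claim that neither determinant vanishes identically --- is self-contained, works for arbitrary symplectic endpoints, and gives the stronger conclusion that the excluded set of $\theta$ is finite per period. The symplectic linear algebra in the key claim checks out: $(B-I)v'=Jv$ places $Jv$ in $\mathrm{im}(B-I)$, which equals the $\omega_0$-orthogonal complement of $\ker(B-I)$, and unwinding this via the orthogonality of $J$ gives $v\perp\ker(B-I)$, contradicting $0\neq v\in\ker(B-I)$. One small correction: the hypothesis under which the analytic eigenvector branch $v(\theta)$ exists is local constancy of the \emph{geometric} multiplicity, i.e.\ of $\mathrm{rank}(B(\theta)-I)$, not of the algebraic multiplicity (think of $B(\theta)=I+N(\theta)$ with $N(\theta)$ nilpotent of varying rank). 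Choosing $\theta_0$ where $\mathrm{rank}(B(\theta)-I)$ is maximal --- an open, nonempty condition --- the kernel is an analytic subbundle near $\theta_0$ and admits a nonvanishing analytic local section, which is all your differentiation step requires.
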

\begin{proof}
	For $\Phi^{\#}(0)$ with the form as \eqref{4.2}, let $0 \leq \theta_j < 2\pi$ and choose a sufficiently small $\theta_0$ such that $0<\theta_0 < \min_{\theta_j \not=0,\pi} \{\theta_j,|\pi-\theta_j|,|2\pi-\theta_j|\}$, then
	\begin{align*}
	& \det(I_{2n}-e^{-\theta_0 J_n}\Phi^{\#}(0))=\prod_{j=1}^{n}[1-\cos(\theta_j-\theta_0)]^2 + \sin^2(\theta_j-\theta_0) \not=0, \\
	& \det(-I_{2n}-e^{-\theta_0 J_n}\Phi^{\#}(0))=\prod_{j=1}^{n}[1+\cos(\theta_j-\theta_0)]^2 + \sin^2(\theta_j-\theta_0) \not=0.
	\end{align*}
	Thus, $e^{-\theta_0 J_n}\Phi^{\#}(0)$ have no eigenvalues equal to $\pm{1}$. For $e^{-\theta J_n}\Phi^{\#}(1)$, we have the similar result that there exists $\theta'_0$ such that $e^{-\theta'_0 J_n}\Phi(1)$ has no eigenvalues equal to $\pm{1}$, we choose $\theta = \min\{\theta_0,\theta'_0\}$ and  the lemma holds. In particular, if $\Phi^{\#}(0)$ and $\Phi^{\#}(1)$ have no eigenvalues equal to $\pm{1}$, we can choose $\theta=0$. This completes the proof.
\end{proof}	

Then we define an operation on $\Phi^{\#}$ as
\begin{align}\label{4.8}
	\Phi^{\#}_{\theta}(t):= e^{-\theta J_n}\Phi^{\#}(t),\ \theta \geq 0.
\end{align}
We call \eqref{4.8} the {\bf global perturbation} of $\Phi^{\#}$ with the rotation angle of $\theta$. By Lemma \ref{Lemma1}, there exists a sufficiently small $\theta \geq 0$ such that
\begin{align*}
	\Phi^{\#}_{\theta}(0), \ \  \Phi^{\#}_{\theta}(1) \notin \mathscr{C}(2n,\mathbb{R}).
\end{align*}
That is to say, orthogonalization and sufficiently small global perturbation give rise to a modified path $\Phi^{\#}_{\theta}$ whose  two endpoints are orthogonal and non-degenerate symplectic matrices. Then we show  that the rotation number of $\Phi$ also remains invariant under this operation of global perturbation.
\begin{lemma}\label{lemma orthogonalization and perturbation}
For $\Phi \in \mathcal{P}(2n,\mathbb{R})$, the rotation number \eqref{2.7}  is invariant under the operations of orthogonalizations and sufficiently small global perturbations, i.e., 
\begin{equation}\label{Delta-OP-inv}
\Delta(\Phi^{\#}_{\theta})=\Delta(\Phi).
\end{equation}
\end{lemma}

\begin{proof}
	By Lemma \ref{lemma orthogonalization}, we only need to prove $\Delta(\Phi^{\#}_{\theta})=\Delta(\Phi^{\#})$. Define
	\begin{align*}
		\gamma_0(t) = e^{-\theta t J_n}\Phi^{\#}(0),\  \  \gamma_1(t) = e^{-\theta t J_n}\Phi^{\#}(1).
	\end{align*}
	We construct a homotopic map
	\begin{align*}
		H(t,s) = \gamma_0([0,s]) \# \Phi^{\#}_{\theta s} \# (-\gamma_1([0,s]))(t), \ 0 \leq t \leq 1, \ 0 \leq s \leq 1,
	\end{align*}
	which satisfies $H(t,0)=\Phi^{\#}(t)$ and $H(t,1)=\gamma_0 \# \Phi^{\#}_{\theta} \# (-\gamma_1)(t)$. Then $\Phi^{\#}$ is homotopic to $\gamma_0 \# \Phi^{\#}_{\theta} \# (-\gamma_1)$, by \eqref{2.10} and \eqref{2.9}, we obtain
	\begin{align*}
		\Delta(\Phi^{\#}) = \Delta(\Phi^{\#}_{\theta}) + \Delta(\gamma_0) - \Delta(\gamma_1).
	\end{align*}
	Then we only need to show $\Delta(\gamma_0) = \Delta(\gamma_1)$. Given an arbitrary orthogonal symplectic matrix 
	$\begin{pmatrix}
			X & -Y \\
			Y & X
		\end{pmatrix}$, 
	Let 
	\begin{align*}
		\gamma(t): =e^{-\theta t J_n}\begin{pmatrix}
			X & -Y \\
			Y & X
		\end{pmatrix}=\begin{pmatrix}
		\cos(\theta t)X+\sin(\theta t)Y & \sin(\theta t)X-\cos(\theta t)Y \\
		\cos(\theta t)Y-\sin(\theta t)X & \cos(\theta t)X+\sin(\theta t)Y
		\end{pmatrix}.
	\end{align*}
By \eqref{2.5}, we have
	\begin{align*}
		\rho(\gamma(t)) & = \det((\cos(\theta t)X+\sin(\theta t))Y+i(\cos(\theta t)Y-\sin(\theta t)X)) \\
	                  	& = \det((\cos(\theta t)-i\sin(\theta t))X+(\sin(\theta t)+i\cos(\theta t))Y) \\
	                  	& = \det((\cos(\theta t)-i\sin(\theta t))(X+iY)) \\
	                  	& = e^{-in\theta t}\det(X+iY)).
	\end{align*}
Set $\det(X+iY))=e^{i\theta_0}$, then $\rho(\gamma(t))=e^{i(\theta_0-n\theta t)}$. By \eqref{2.6} and \eqref{2.7}, the rotation number $\Delta(\gamma)=-\frac{n\theta}{\pi}$. Since $\Phi^{\#}(0)$ and $\Phi^{\#}(1)$ are orthogonal symplectic matrices, then $\Delta(\gamma_0) = \Delta(\gamma_1)=-\frac{n\theta}{\pi}$. Thus, $\Delta(\Phi^{\#}) = \Delta(\Phi^{\#}_{\theta}) $.
This completes the proof of the equality \eqref{Delta-OP-inv}.
\end{proof}

\vspace{3mm}

\noindent{\bf Extension}

To obtain an integer-valued index, we then consider the extension of $\Phi^{\#}_{\theta}$. Set
\begin{align*}
	A:=\Phi^{\#}_{\theta}(0),\ \  B:=\Phi^{\#}_{\theta}(1)
\end{align*}
and then all eigenvalues of $A$ and $B$ belong to $\mathbb{S}^{1} \backslash \{\pm{1}\}$. By the orthogonalization and global perturbation, we know that $A$ and $B$ are block diagonal matrices. Let
\begin{align}\label{4.9}
	A=A_1 \diamond A_2 \diamond \cdots \diamond A_n \in Sp(2n,\mathbb{R}),
\end{align}
where $a_1 \leq a_2 \leq \cdots \leq a_n $ and
\begin{align*}
	A_j=\begin{pmatrix}
		\cos a_j & -\sin a_j \\
		\sin a_j& \cos a_j
	\end{pmatrix} \in Sp(2,\mathbb{R}), \ j=1, 2, \dots, n.
\end{align*}
has the first kind eigenvalue $\lambda_j$. Similarly, let
\begin{align}\label{4.10}
	B=B_1 \diamond B_2 \diamond \cdots \diamond B_n \in Sp(2n,\mathbb{R}),
\end{align}
where $ b_1 \leq b_2 \leq \cdots \leq b_n $ and
\begin{align*}
	B_j=\begin{pmatrix}
		\cos b_j & -\sin b_j \\
		\sin b_j& \cos b_j
	\end{pmatrix} \in Sp(2,\mathbb{R}), \ j=1, 2, \dots, n.
\end{align*}
has the first kind eigenvalue $\mu_j$. Now we want to find the end point for the extension of $\Phi^{\#}_{\theta}$.
Compared with the Conley-Zehnder index, Long index and $L_0$-index, the extension of $\Phi^{\#}_{\theta}$ has $2^n$ possible end points instead of $2$.
The variousness of the end points of the extension is the result of that the starting point of the symplectic path is a general symplectic matrix rather than the identity.
These $2^n$ possible end points can be expressed as the following form
\begin{align}\label{4.11}
	W_{A} = W_1 \diamond W_2 \diamond \cdots \diamond W_n
\end{align}
where
\begin{align*}
	W_j=\begin{pmatrix}
		\cos w_j & -\sin w_j \\
		\sin w_j& \cos w_j
	\end{pmatrix} = A_j \ \text{or} \ -A_j, \ j=1, 2, \dots, n.
\end{align*}	
Given $A=\Phi^{\#}_{\theta}(0)$, one can choose the unique one, which is determined by $B=\Phi^{\#}_{\theta}(1)$,   from the set $\{W_{A}\}$ of $2^n$ elements, denoted by $W_{A,B}$. The rule is given by
\begin{equation}\label{4.12}
	W_j = \left\{
	\begin{array}{rl}
		A_j, & Im(\lambda_j) Im(\mu_j) > 0, \\
		-A_j, & Im(\lambda_j) Im(\mu_j) < 0,
	\end{array} \right.	
\end{equation}
where $\lambda_j,\mu_j$ are the first kind eigenvalues of $A$ and $B$. According to \eqref{4.12}, the imaginary parts of the first kind eigenvalues of $W_j$ and $B_j$ have the same signs. Then the product of the imaginary parts corresponding to $W_{A,B}$ and $B$ also have the same signs and hence $W_{A,B}$ and $B$ are in the same connected components of $\mathscr{N}(2n,\mathbb{R})$ (see \eqref{4.7}), thus we can define the {\bf extension} for $\Phi^{\#}_{\theta}$ as
\begin{align}\label{4.13}
	\beta :[0,1] \to Sp(2n,\mathbb{R}) \backslash \mathscr{C}(2n,\mathbb{R}),\ \beta(0)=B,\ \beta(1)=W_{A,B}.
\end{align}

To illustrate that the  rotation number is independent of the choices of extension $\beta$, we need the following lemma which is a corollary by Lemma 1.7  of \cite{CZ2} and Lemma 3.2 of \cite{SZ}.
\begin{lemma}\label{Lemma2}
	If $\Phi$ is a loop in $Sp(2n,\mathbb{R}) \backslash \mathscr{C}(2n,\mathbb{R})$, then the rotation number of $\Phi$ (see \eqref{2.7}) is equal to zero, i.e. $\Delta(\Phi)=0$.
\end{lemma}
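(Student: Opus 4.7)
The plan is to reduce the statement to the already-cited fact that every loop in $Sp^{*}_{1}(2n,\mathbb{R})$ is contractible. The key observation is the inclusion
$$Sp(2n,\mathbb{R}) \setminus \mathscr{C}(2n,\mathbb{R}) \;=\; Sp^{*}_{1}(2n,\mathbb{R}) \cap Sp^{*}_{-1}(2n,\mathbb{R}) \;\subset\; Sp^{*}_{1}(2n,\mathbb{R}),$$
which follows directly from the definition \eqref{4.5} of the cycle $\mathscr{C}(2n,\mathbb{R})$ and the notation \eqref{3.1}. Thus any loop $\Phi$ in the left-hand space is in particular a loop in $Sp^{*}_{1}(2n,\mathbb{R})$.

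Next, I would invoke Lemma 3.2 of \cite{SZ} (and, analogously, Lemma 1.7 of \cite{CZ2}) to conclude that $\Phi$ is contractible as a loop in $Sp^{*}_{1}(2n,\mathbb{R})$, and hence \emph{a fortiori} contractible in the ambient space $Sp(2n,\mathbb{R})$. That is, there exists a continuous homotopy $H:[0,1]\times[0,1]\to Sp(2n,\mathbb{R})$ with $H(\cdot,0)=\Phi$ and $H(\cdot,1)$ constant.

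Finally, I would apply Proposition \ref{Prop Delta}(1), whose second clause states precisely that contractible loops have vanishing rotation number. Concretely, composing the null-homotopy with the continuous map $\rho:Sp(2n,\mathbb{R})\to\mathbb{S}^{1}$ of \eqref{2.2} yields a null-homotopy of the loop $\rho\circ\Phi$ in $\mathbb{S}^{1}$; lifting through $e^{i\pi\alpha}$ as in \eqref{2.6} then forces $\alpha(1)-\alpha(0)=0$, i.e.\ $\Delta(\Phi)=0$.

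There is no real obstacle here: once the inclusion into $Sp^{*}_{1}(2n,\mathbb{R})$ is recognized, the result is a direct corollary of the cited contractibility lemma combined with Proposition \ref{Prop Delta}(1). The only mild subtlety worth a sentence in the writeup is to make explicit that ``contractible in $Sp^{*}_{1}(2n,\mathbb{R})$'' implies ``contractible in $Sp(2n,\mathbb{R})$,'' so that the hypothesis of Proposition \ref{Prop Delta}(1) is met in the form used there.
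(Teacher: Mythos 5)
Your proposal is correct, and it reaches the conclusion by a genuinely different route from the paper's written proof. You observe that $Sp(2n,\mathbb{R})\setminus\mathscr{C}(2n,\mathbb{R})=Sp^{*}_{1}(2n,\mathbb{R})\cap Sp^{*}_{-1}(2n,\mathbb{R})\subset Sp^{*}_{1}(2n,\mathbb{R})$ and then quote the contractibility of loops in $Sp^{*}_{1}(2n,\mathbb{R})$ (Lemma 3.2 of \cite{SZ}, already stated in Section \ref{rev CZL index}) together with Proposition \ref{Prop Delta}(1); since the paper defines ``contractible'' as null-homotopic in the ambient group $Sp(2n,\mathbb{R})$, and contractibility inside the subspace implies contractibility in the ambient space, the hypotheses of Proposition \ref{Prop Delta}(1) are indeed met and $\Delta(\Phi)=0$ follows. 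The paper instead gives a self-contained, hands-on argument: it introduces angular coordinates $\alpha_j$ for the first kind eigenvalues along the loop, notes that since no eigenvalue crosses $\pm 1$ each $\alpha_j$ satisfies $|\alpha_j(\Phi(1))-\alpha_j(\Phi(0))|<1$ and hence has zero net variation, and sums over $j$. What your route buys is brevity and a clean reduction to the cited literature (the authors themselves flag the lemma as ``a corollary of'' those results); what the paper's route buys is the finer eigenvalue-by-eigenvalue bookkeeping, which is not needed for the statement of the lemma but is reused almost verbatim in the proof of Theorem \ref{Thm1}(1) to show the index is an integer. Either argument is acceptable for the lemma as stated.
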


\begin{proof}
	Choose any loop $\Phi:[0,1] \to Sp(2n,\mathbb{R}) \backslash \mathscr{C}(2n,\mathbb{R})$, then $\Phi(0)=\Phi(1)$. Given a $M \in \Phi$, we define
	\begin{align*}
		\alpha_{j}:Sp(2n,\mathbb{R}) \backslash \mathscr{C}(2n,\mathbb{R}) \to [0,2],  \ j=1,2,...,n
	\end{align*}
	by $e^{i\pi \alpha_{j}(M)} = \frac{\lambda_j}{|\lambda_j|}$, where $\lambda_j$ are the first kind eigenvalues of $M$. We can order these first kind eigenvalues such that $\alpha_{j}(M) \leq \alpha_{j+1}(M)$. If there are no positive eigenvalues, then $\alpha_{j}(M)$ is determined by $e^{i\pi \alpha_{j}(M)} = \frac{\lambda_j}{|\lambda_j|}$ uniquely. If $\lambda_j > 0$, we choose $\alpha_{j}(M)$ such that there is the same number of $j$'s with $\alpha_{j}(M)=0$ and with $\alpha_{j}(M)=2$. We construct the function $\rho(\Phi(t))=e^{i\pi \sum_{j=1}^{n}\alpha_{j}(\Phi(t))}$ which satisfies the properties of Theorem \ref{Thm SZ} and then it is unique. Moreover, we can see that every $\alpha_{j}(\Phi(t))$ is periodic and continuous and satisfies $|\alpha_j(\Phi(1))-\alpha_j(\Phi(0))| < 1$. That is because $\lambda_j$ does not pass through $\pm{1}$ according to the condition. Then $\alpha_j(\Phi(1))-\alpha_j(\Phi(0))=0$ and hence
	\begin{align*}
		\Delta(\Phi)=\sum_{j=1}^{n} (\alpha_j(\Phi(1))-\alpha_j(\Phi(0)))=0.
	\end{align*}
	This completes the proof.
\end{proof}

For the reader's convenience of  understanding the construction of the extension and  the rule of determining $W_{A,B}$ by $A$ and $B$, we give an $ad\ hoc$ example here to show how to do this.
\begin{example}\label{Eg W_{A,B}}
We consider path of $4 \times 4$ symplectic matrices. Let $A$ and $B$ be orthogonal symplectic matrices as follows
\begin{align*}
	A=A_1 \diamond A_2, \ B=B_1 \diamond B_2,
\end{align*}
where
\begin{align*}
	A_1=\begin{pmatrix}
		\frac{\sqrt{2}}{2} & -\frac{\sqrt{2}}{2} \\
		\frac{\sqrt{2}}{2}& \frac{\sqrt{2}}{2}
	\end{pmatrix}, \
	A_2=\begin{pmatrix}
		\frac{1}{2} & -\frac{\sqrt{3}}{2} \\
		\frac{\sqrt{3}}{2} & \frac{1}{2}
	\end{pmatrix}, \
	B_1=\begin{pmatrix}
		0 & -1 \\
		1 & 0
	\end{pmatrix}, \
	B_2=\begin{pmatrix}
		0 & 1 \\
		-1 & 0
	\end{pmatrix}.
\end{align*}	
The set of eigenvalues of $A$ is $\{e^{\pm{\frac{i \pi}{4}}},e^{\pm{\frac{i \pi}{3}}}\}$ and the first kind eigenvalues are $\lambda_1=e^{\frac{i \pi}{4}},\lambda_2=e^{\frac{i \pi}{3}}$, the set of eigenvalues of $B$ is $\{e^{\pm{\frac{i \pi}{2}}},e^{\pm{\frac{3i \pi}{2}}}\}$ and the first kind eigenvalues are $\mu_1=e^{\frac{i \pi}{2}},\mu_2=e^{\frac{3i \pi}{2}}$. Since $Im(\lambda_1) Im(\mu_1)=\frac{\sqrt{2}}{2} \times \frac{\sqrt{3}}{2} > 0$	and $Im(\lambda_2) Im(\mu_2)=1 \times (-1) <0$, then we have $W_1=A_1$ and $W_2=-A_2$ by the rule \eqref{4.12}.
Hence one can obtain
\begin{align*}
	W_{A,B}=A_1 \diamond (-A_2).
\end{align*}
Then one can choose the extension as
\begin{align*}
	\beta(t)=\begin{pmatrix}
		\cos(\frac{\pi}{2}-\frac{\pi t}{4}) & -\sin(\frac{\pi}{2}-\frac{\pi t}{4})  \\
		\sin(\frac{\pi}{2}-\frac{\pi t}{4}) & \cos(\frac{\pi}{2}-\frac{\pi t}{4}) 
	\end{pmatrix} \diamond 
   \begin{pmatrix}
   \cos(\frac{3\pi}{2}-\frac{\pi t}{6}) & -\sin(\frac{3\pi}{2}-\frac{\pi t}{6})\\
   \sin(\frac{3\pi}{2}-\frac{\pi t}{6}) & \cos(\frac{3\pi}{2}-\frac{\pi t}{6})
\end{pmatrix} ,\ 0 \leq t \leq 1,
\end{align*}
which lies in the component $\mathscr{N}^{-}(2n,\mathbb{R})$ (\eqref{4.99}), satisfying $\beta(0)=B$, $\beta(1)=W_{A,B}$.
\end{example}

Now we define the Maslov-type index as
\begin{definition} \label{Def Maslov.type.index}
	For any $\Phi \in \mathcal{P}(2n,\mathbb{R})$, the Maslov-type index is defined by
	\begin{align}\label{4.14}
		\mu(\Phi) := \Delta(\Phi^{\#}_{\theta}) + \Delta(\beta).
	\end{align}	
\end{definition}
If we choose another extension $\beta{'}$, then $\beta{'} \# -\beta$ is a loop in $Sp(2n,\mathbb{R}) \backslash \mathscr{C}(2n,\mathbb{R})$. It follows from Lemma \ref{Lemma2} that
$\Delta(\beta{'} \# -\beta) =0$ and hence $\Delta(\beta{'})=\Delta(\beta)$, then $\Delta(\beta)$ is independent of the choices of $\beta$. We will show that $\mu(\Phi)$ is also independent of the choice of a sufficiently small  $\theta$ in Theorem \ref{Thm1} (2) so that it is well defined, which will be proved in section \ref{Sec-proof}.

\begin{remark}\label{method}
	The method to construct the index for general symplectic paths is not unique, we can also follow the idea from Long \cite{L0} and Liu \cite{Liu1}. If we only apply the Definition \ref{Def Maslov.type.index} to the paths starting at $I$, then for the general paths $\Phi$, we can define the index as
	\begin{equation*}
		\hat{\mu}(\Phi)=\mu(\Phi' \# \Phi) - \mu(\Phi'),
	\end{equation*}
	where $\Phi'$ is the symplectic path which starts at $I$ and ends at $\Phi(0)$, we need to show that this index is independent of the choices of $\Phi'$ so it is well defined. Since our method can deal with the general symplectic paths directly and be consistent with the one for path starting from identity, we do not have to apply such an indirect way  to accomplish the construction.	
\end{remark}

\section{Proof of the main results}\label{Sec-proof}

In this section we will prove the main results, which shows some properties of $\mu(\Phi)$ and claims the relationships to other Maslov-type indices.

\begin{proof}[\bf Proof of Theorem \ref{Thm1}]
	We prove Theorem \ref{Thm1} (1) firstly. By \eqref{4.14}, $\mu(\Phi)$	is defined by $\Delta(\Phi^{\#}_{\theta}) + \Delta(\beta)$, let
	\begin{equation}\label{5.1}
		\Phi{'}(t) = \left\{
		\begin{array}{ll}
			\Phi^{\#}_{\theta}(2t), & 0 \leq t < \frac{1}{2},\\
			\beta(2t-1),  & \frac{1}{2} \leq t \leq 1,
		\end{array} \right.	
	\end{equation}
	then $\Phi{'}(0)=A, \Phi{'}(1)=W_{A,B}$. We can construct
	\begin{align*}	
		\rho(\Phi{'}(t))=e^{i\pi \sum_{j=1}^{n} \alpha_{j}(t)},\ j=1,2,\cdots, n,
	\end{align*}	
	where every $\alpha_{j}$ satisfies that $e^{i\pi \alpha_{j}(t)}=\frac{\lambda_j(t)}{|\lambda_j(t)|}$ and $\lambda_j(t)$ is the first kind eigenvalue of $\Phi^{'}(t)$. By \eqref{4.12}, we have $\lambda_j(0)=\pm{\lambda_j(1)}$, then $\alpha_{j}(1)-\alpha_{j}(0) \in \mathbb{Z}$ and hence
	\begin{align}\label{5.2}
		\Delta(\Phi{'})=\sum_{j=1}^{n}(\alpha_{j}(1)-\alpha_{j}(0))\in \mathbb{Z}.
	\end{align}	
	This proves that $\mu(\Phi)$ is an integer.
	
	We continue to prove Theorem \ref{Thm1} (2). By Lemma \ref{lemma orthogonalization}, we know that the rotation number is independent of the choices of the orthogonalizations.
	Suppose $\Phi^{\#}_{\theta}$ and $\Phi^{\#}_{\theta{'}}$ are two different perturbations, by Lemma \ref{lemma orthogonalization and perturbation}, we have $\Delta(\Phi^{\#}_{\theta})=\Delta(\Phi^{\#}_{\theta{'}})=\Delta(\Phi)$ . Let $\beta$ be the extension of $\Phi^{\#}_{\theta}$. In section 4, we have shown that $\Delta$ is independent of the choice of the extension. Denote the endpoint of the extension of $\Phi^{\#}_{\theta{'}}$ by $W_{A{'},B{'}}$. Then we can choose the extension of $\Phi^{\#}_{\theta{'}}$ as
	\begin{equation}\label{5.3}
		\beta{'}(t) = \left\{
		\begin{array}{ll}
			\beta'_1(3t), & 0 \leq t < \frac{1}{3}, \\
			\beta(3t-1), & \frac{1}{3} \leq t < \frac{2}{3}, \\
			\beta'_2(3t-2), & \frac{2}{3} \leq t \le 1,
		\end{array} \right.
	\end{equation}
	where $\beta'_1$ and $\beta'_2$ are the path in $Sp(2n,\mathbb{R}) \backslash \mathscr{C}(2n,\mathbb{R})$ and satisfy $\beta'_1(0)=\Phi^{\#}_{\theta'}(1)$,\\
	$\beta'_1(1)=\Phi^{\#}_{\theta}(1), \beta'_2(0) =W_{A,B}$ and $\beta'_2(1)=W_{A{'},B{'}}$. Since both $\theta$ and $\theta{'}$ are small enough, by the continuity of $\rho$ (see Theorem \ref{Thm SZ}) and \eqref{2.9}, we can see
	$|\Delta(\beta)-\Delta(\beta{'})|=|\Delta(\beta'_1)+\Delta(\beta'_2)|$ is small enough and hence
	\begin{align}\label{5.4}
		|\mu(\Phi)-\mu{'}(\Phi)| & \leq |\Delta(\Phi^{\#}_{\theta})-\Delta(\Phi^{\#}_{\theta{'}})|+|\Delta(\beta)-\Delta(\beta{'})| \\
		& = |\Delta(\beta)-\Delta(\beta{'})|
	\end{align}
	is small enough, where $\mu{'}(\Phi)=\Delta(\Phi^{\#}_{\theta{'}})+\Delta(\beta{'})$. Then we obtain $\mu(\Phi)=\mu{'}(\Phi)$ because they are integer. This implies that $\mu(\Phi)$ is also independent of the sufficiently small $\theta$ and hence Definition \ref{Def Maslov.type.index} is well defined.
	
	If $\Phi, \Psi$ are homotopic with fixed end points, by Theorem \ref{Thm1} (2), we can choose the same $\theta$ and $\beta$ such that $\Phi^{\#}_{\theta} \# \beta$ and $\Psi^{\#}_{\theta} \# \beta$ are homotopic with fixed end points, by \eqref{2.10}, we have $\Delta(\Phi^{\#}_{\theta} \# \beta)=\Delta(\Psi^{\#}_{\theta} \# \beta)$, that is $\mu(\Phi)=\mu(\Psi)$ and Theorem \ref{Thm1} property (3) holds.
	
	For $0<a<1$, we can choose a suitable perturbation such that $\Phi^{\#}_{\theta}(a) \notin \mathscr{C}(2n,\mathbb{R})$ and set $C=\Phi^{\#}_{\theta}(a)$. The end points for extensions of $\Phi^{\#}_{\theta}, \Phi^{\#}_{\theta}([0,a])$ and $\Phi^{\#}_{\theta}([a,1])$ are $W_{A,B},W_{C,B}$ and $W_{A,C}$. Denote these extensions by $\beta, \beta_3$ and $\beta_4$, by \eqref{2.9} and \eqref{4.14}, we have
	\begin{align}\label{5.5}
		\mu(\Phi([0,a]))+\mu(\Phi([a,1]))=\Delta(\Phi^{\#}_{\theta})+\Delta(\beta_3)+\Delta(\beta_4).
	\end{align}
	The first kind eigenvalues of $\beta_4$ is from $\lambda_{C}$ to $\lambda_{W}$ on $\mathbb{S}^{1} \backslash \{\pm{1}\}$ and here $\lambda_{C} \in \sigma(C),\ \lambda_{W} \in \sigma(W_{A,C})$. We consider the path $-\beta_3 \# \beta$ which starts at $W_{C,B}$ and ends at $W_{A,B}$, the first eigenvalues change along $\mathbb{S}^{1} \backslash \{\pm{1}\}$ from $\pm  \lambda_{C}$ to $\pm \lambda_{W}$. Then we obtain $\Delta(\beta_4)=\Delta(-\beta_3 \# \beta)$ and hence $\Delta(\beta_3)+\Delta(\beta_4)=\Delta(\beta)$, so
	\begin{align}\label{5.6}
		\mu(\Phi([0,a]))+\mu(\Phi([a,1]))=\Delta(\Phi^{\#}_{\theta})+\Delta(\beta)=\mu(\Phi) .
	\end{align}
	This has proved Theorem \ref{Thm1} (4).
	
	By \eqref{2.12}, Theorem \ref{Thm1} (5) is obvious. We only need to choose the extension of the form as
	\begin{align*}
		\beta(z_1,z_2)=(\beta_5z_1, \beta_6z_2),
	\end{align*}
	where $\beta_5, \beta_6$ are the extensions of $(\Phi_1)^{\#}_{\theta}, (\Phi_2)^{\#}_{\theta}$ and this property holds. This completes the proof.
\end{proof}

Then we show the relationship between our Maslov-type index and other indices. 
$\mu(\Phi)$ is actually an intersection number of path $\Phi$ and the cycle $\mathscr{C}(2n,\mathbb{R})$ (see \eqref{4.5}), 
which is essentially determined by every resulted path of  first kind eigenvalues of $\Phi(t)$. 
The differences between $\mu(\Phi)$ and other  indices are determined by different methods of various constructions.
We will show the details and prove Theorems \ref{Thm2}, \ref{Thm3}, \ref{Thm4}, \ref{Thm5} and \ref{Thm6}.

\begin{proof}[\bf Proof of Theorem \ref{Thm2}]
{\bf(i)} We first consider the non-degenerate case. Recall \eqref{3.3}, Lemma \ref{lemma orthogonalization and perturbation} and \eqref{4.14},  
$$\mu_{CZ}(\Phi)= \Delta(\Phi) + \Delta(\gamma)= \Delta(\Phi^{\#}_{\theta}) + \Delta(\gamma),\ \ \ \   \mu(\Phi) = \Delta(\Phi^{\#}_{\theta}) + \Delta(\beta).$$  
Then the difference between $\mu(\Phi)$ and $\mu_{CZ}(\Phi)$ depends on the different extensions. Recall \eqref{2.2}, \eqref{2.6}  and \eqref{2.7}, we can see that the contribution  to either index of a corresponding extension path of matrices can be attributed to the contribution of each resulted extending path of first kind eigenvalues.
Since non-degenerate path $\Phi$ satisfies $\Phi(0)=I, \det (I-\Phi(1)) \not = 0$, then  every resulted path of the every first kind eigenvalues $\lambda(t)$ starts at $\lambda(0)=1$ 
and ends at $\lambda(1)\neq 1$. 
We just need to study contrastively the contributions of each resulted extending path of first kind eigenvalues for $\mu(\Phi)$ and $\mu_{CZ}(\Phi)$.
We do not have to consider those conjugate pair of first kind eigenvalues with $| \lambda |<1$ and $\mathrm{Im} \lambda \neq 0$, 
since their contributions to rotation number are always cancelled by each other.
All remaining cases about the  first kind eigenvalues of $\Phi(1)$ are as follows: 
	
(1) \ First kind eigenvalue $\lambda(1) \in \R$.  For $\mu_{CZ}(\Phi)$, the extension path $\gamma$ (see \eqref{ext-gamma}) results in extending $\lambda(1)$ to $\frac{1}{2}$ or $-1$  
and the corresponding extending path of the first kind eigenvalue does not crossing $1$. Then the contribution of extension $\gamma$ to $\mu_{CZ}(\Phi)$ attributed to $\lambda(1)$  is equal to zero. 
For $\mu(\Phi)$, after the orthogonalization and global perturbation, $\lambda|_{\Phi_{\theta}^{\#}(1)} = e^{-i\theta}$ when $\lambda(1)>0$ 
or $\lambda|_{\Phi_{\theta}^{\#}(1)} = e^{i(\pi - \theta)}$ when $\lambda(1)<0$. By \eqref{4.12} and \eqref{4.13}, the extension path $\beta$ results in that
the terminal point of extending path of this first kind eigenvalue (along the unit circle) is $e^{-i\theta}$ when $\lambda(1)>0$ or is $e^{i(\pi - \theta)}$ when $\lambda(1)<0$. 
Hence the contribution to $\mu(\Phi)$ attributed to $\lambda(1)$  is also equal to zero.

(2) \ First kind eigenvalue $\lambda(1) \in \mathbb{S}^{1}$ and $\mathrm{Im} \lambda(1) > 0$. 
For $\mu_{CZ}(\Phi)$, from \eqref{ext-gamma} we see that every path of the first kind eigenvalues resulted from the extending path $\gamma$  starts at $\lambda(1)$ then going along the unit circle anti-clockwise and ends at $-1$. 
For $\mu(\Phi)$, since $\lambda(0)=1$, after the global perturbation, we have $\mathrm{Im}( e^{-i\theta}\lambda(0))= \mathrm{Im}( e^{-i\theta})< 0$. 
According to the condition $\mathrm{Im} \lambda(1) > 0$ and $\theta$ is sufficiently small, we see $\mathrm{Im}( e^{-i\theta}\lambda(1)) > 0$. 
By \eqref{4.12} and \eqref{4.13}, each  path of the first kind eigenvalues resulted from the extending path  $\beta$ starts at $e^{-i\theta}\lambda(1)$ 
then going along the unit circle  anti-clockwise and ends at $-e^{-i\theta}$. 
The rotation angle along the unit circle from $\lambda(1)$ to $-1$ and the one from  $e^{-i\theta}\lambda(1)$ to  $-e^{-i\theta}$ are the same.
Then we can see that the contributions to the rotation numbers of the extending paths $\gamma$ and $\beta$ in two cases attributed to the corresponding paths of  first kind eigenvalues are the same.
	
(3) \ First kind eigenvalue $\lambda(1) \in \mathbb{S}^{1}$ and $\mathrm{Im} \lambda(1) < 0$. 
The argument is similar to case (2). For $\mu_{CZ}(\Phi)$, every path of the first kind eigenvalues resulted from the extending path $\gamma$  starts at $\lambda(1)$ then going along the unit circle while clockwise (since $\mathrm{Im} \lambda(1) < 0$ and the extending part of $\lambda(t)$ is not permitted to pass across 1) and ends at $-1$.
For $\mu(\Phi)$, the difference with case (2) is that  $\mathrm{Im}( e^{-i\theta}\lambda(1)) <0$. By \eqref{4.12} and \eqref{4.13}, we will take the the different terminal point for $\beta$ such that 
each  path of the first kind eigenvalues resulted from the extending path  $\beta$ starts at $e^{-i\theta}\lambda(1)$ 
then going along the unit circle  anti-clockwise and ends at $e^{-i\theta}\lambda(0)=e^{-i\theta}$.
Then we can see that the contribution to the rotation number of the extending paths $\gamma$ attributed to the corresponding path of  first kind eigenvalues is one less than that of  $\beta$.

In summary, the crucial  difference between values of $\mu_{CZ}(\Phi)$ and  $\mu(\Phi)$ is caused by the case (3) above.	
We denote by
	\begin{align}\label{5.7}
		r(M):= \#\{\lambda \ |\  \lambda \in \sigma_{1}(M) \cap \mathbb{S}^{1}  \ \mathrm{and}\  \mathrm{Im} \lambda < 0 \},
	\end{align}
where the counting involves the multiplicity of the first kind eigenvalues. 
So each first kind eigenvalue of $\Phi(1)$ on $\mathbb{S}^{1}$ with $\mathrm{Im} \lambda(1) < 0$ leads to contribution to the value of $\mu$  one more  than that of $\mu_{CZ}$,  while the contributions to both indices caused by  eigenvalues in other cases are the same. Then we have
	\begin{align*}
		\mu_{CZ}(\Phi)=\mu_{L}(\Phi)=\mu(\Phi)-r(\Phi(1)).
	\end{align*}
	
\noindent{\bf(ii)} Now we consider the case of degenerate path $\Phi \in \mathcal{P}(2n,\mathbb{R})$, i.e. $\Phi(0)=I, \det (I-\Phi(1)) = 0$. 
We only need to further consider the new issue about the first kind eigenvalues $1$ of $\Phi(1)$ and its variation caused by the operation of rotational perturbation. By \eqref{3.3}, \eqref{def-muL} and Lemma \ref{lemma orthogonalization and perturbation} , for sufficiently small $s>0$,
$$\mu_{L}(\Phi) = \mu_{CZ}(\Phi(-s,\cdot))= \Delta(\Phi(-s,\cdot)) + \Delta(\gamma_s),$$ 
where $\gamma_s$ is the Conley-Zehnder's extension path starting at $\Phi(-s,1)$. 
Recall \eqref{rotational perturbation} and \eqref{2.9}, we have
$$ \Delta(\Phi(-s,\cdot)) =  \Delta(\Phi(-s,\cdot)|_{[0,t_0]}) + \Delta(\Phi(-s,\cdot)|_{[t_0,1]}) =  \Delta(\Phi|_{[0,t_0]}) + \Delta(\Phi(-s,\cdot)|_{[t_0,1]}) ,$$
and $\Phi(-s, t)$ is sufficiently close to  $\Phi(1)$   for $\forall\ t \in [t_0,1]$. Thus,
$$\mu_{L}(\Phi) =  \Delta(\Phi|_{[0,t_0]}) + \Delta(\Phi(-s,\cdot)|_{[t_0,1]} + \Delta(\gamma_s).$$ 
On the other hand, by Lemma \ref{lemma orthogonalization and perturbation} and \eqref{2.9},
$$\mu(\Phi) = \Delta(\Phi^{\#}_{\theta}) + \Delta(\beta)= \Delta(\Phi) + \Delta(\beta)=  \Delta(\Phi|_{[0,t_0]}) + \Delta(\Phi|_{[t_0,1]}) + \Delta(\beta).$$
Recall that $\Delta(\gamma_s)$ depends only on $\Phi(-s,1)$.
That is to say, $\Delta(\gamma_s)$ is determined by the first kind eigenvalues of $\Phi(-s,1)$. 
Recall Proposition \ref{standard form}, there exists $P \in Sp(2n,\R)$ such that
	\begin{align*}
		P \Phi(1) P^{-1} = N_{k_1}(\bb_1) \diamond N_{k_2}(\bb_2) \diamond \cdots \diamond N_{k_q}(\bb_q) \diamond M_0.
	\end{align*} 
The rotational perturbation   slightly changes one of those first kind eigenvalues $1$ of $\Phi(1)$ to $\lambda'(1)\neq 1$, which is one of the first kind eigenvalues of $\Phi(-s,1)$. 
	
Considering our index $\mu(\Phi)$,  since $\Phi(0)=I_{2n}$ then every $\lambda(0)=1$. 
Then for each first kind eigenvalue $\lambda(1)=1$ of $\Phi(1)$, the resulted contribution to $\Delta(\beta)$ is equal to zero. 

Then considering Long index $\mu_L(\Phi)$, we study all cases of  the contribution to $\Delta(\gamma_s)$ resulted from the first kind eigenvalue $\lambda'(1)$, which are as follows: 
\smallskip
	
\noindent (1) \ If $\lambda'(1) \in \mathbb{S}^{1}$ and $Im(\lambda'(1)) > 0$, the path of the first kind eigenvalues resulted from the extending path $\gamma_s$  starts at $\lambda'(1)$ then going along the unit circle anti-clockwise and ends at $-1$. 
Since $\lambda'(1)$ is close to 1, then the resulted contribution to  $\Delta(\gamma_s)$  is almost equal to $1$. Note that $\Delta(\Phi(-s,\cdot)|_{[t_0,1]})$ is close to $\Delta(\Phi|_{[t_0,1]})$ and both indices are integers, which implies that the resulted contribution to $\mu(\Phi)$ is exactly one less than that to $\mu_{L}(\Phi)=\mu_{CZ}(\Phi)(-s,\cdot)$.\smallskip

\noindent (2) \ If $\lambda'(1) \in \mathbb{S}^{1}$ and $\lambda'(1) \in (0,1)$, the path of the first kind eigenvalues resulted from the extending path $\gamma_s$  starts at $\lambda'(1)$ then going along the positive real axis and ends at $\frac1 2$. 
Then the resulted contribution to  $\Delta(\gamma_s)$  is equal to zero, which means the resulted contribution to $\mu(\Phi)$ is exactly equal to that to $\mu_{L}(\Phi)=\mu_{CZ}(\Phi)(-s,\cdot)$.\smallskip

\noindent (3) \ If $\lambda'(1) \in \mathbb{S}^{1}$ and $Im(\lambda'(1)) < 0$, the path of the first kind eigenvalues resulted from the extending path $\gamma_s$  starts at $\lambda'(1)$ then going along the unit circle clockwise and ends at $-1$. 
Since $\lambda'(1)$ is close to 1, then the resulted contribution to  $\Delta(\gamma_s)$  is almost equal to $-1$. By argument similar to the one in case (1) above,
one can see that the resulted contribution to $\mu(\Phi)$ is one more than that to  $\mu_{L}(\Phi)=\mu_{CZ}(\Phi)(-s,\cdot)$.\smallskip

For each $j=1,\cdots, q$, we define
\begin{align*}
& l_{j,1}(M) := \# \{\lambda\ |\ \lambda\in\sigma_1\big( N_{k_j}(\bb_j)  e^{-\theta J_{k_j}} \big)\cap  \mathbb{S}^1\ \mathrm{and}\ \ \mathrm{Im}\ \lambda >0\} ,\\
& l_{j,2} (M):= \# \{\lambda\ |\ \lambda\in\sigma_1\big(N_{k_j}(\bb_j)  e^{-\theta J_{k_j}} \big)\cap (0,1)\}, \\
& l_{j,3}(M) := \# \{\lambda\ |\ \lambda\in\sigma_1\big(N_{k_j}(\bb_j)  e^{-\theta J_{k_j}} \big)\cap  \mathbb{S}^1 \ \mathrm{and}\ \ \mathrm{Im}\ \lambda<0\},
\end{align*} 
where $\theta = s \theta_0 >0$ and $s,\ \theta_0$ are given by \eqref{rotational perturbation}. 
For sufficiently small $\theta$, the definitions above are independent of the choices of 
$\theta$. 
Since each $\lambda'(1)$ is perturbed from eigenvalue 1 of some normal form $N_{k_j}(\bb)$, one can see that each $N_{k_j}(\bb)$ contributes  to the value of $\mu(\Phi)$ more than to that of $\mu_{L}(\Phi)$ by the amount of $l_{j,3}(\Phi(1))-l_{j,1}(\Phi(1))$. For a symplectic matrix $M$ possessing eigenvalue 1, denote by 
	\begin{align}\label{l}
		l(M)=\sum_{j=1}^{q} (l_{j,3}(M)-l_{j,1}(M)).
	\end{align} 
If $M$ does not possess eigenvalue 1, let $l(M)=0$.
Then all normal forms totally contribute  to $\mu(\Phi)$ more than to $\mu_{L}(\Phi)$ by the amount $l(\Phi(1))$. As for other first kind eigenvalues($\not=1$) of $\Phi(1)$, the argument is the same as the one for Conley-Zehnder index in {\bf(i)} above. Then the difference of $\mu$ and $\mu_{L}$ is given by
	\begin{align*}
		\mu_{L}(\Phi)=\mu(\Phi)-r(\Phi(1))-l(\Phi(1)).
	\end{align*}
	This has proved \eqref{1.4} and  the proof of Theorem \ref{Thm2} is complete.
	\end{proof}

\bigskip
	
	\begin{proof}[\bf Proof of Theorem \ref{Thm3}]
	Theorem \ref{Thm3} is the corollary of \eqref{1.4}, \eqref{3.9} and \eqref{3.10}. If $\Phi \in \mathcal{P}(2n,\mathbb{R})$ satisfies $\Phi(0)=I$, then
	\begin{align*}
		i_{L_0}(\Phi)=\mu_{L}(\Phi) - c(\Phi(1))=\mu(\Phi)-r(\Phi(1))-l(\Phi(1))-c(\Phi(1)),
	\end{align*}
	where $c(\Phi(1))$ is the $L_0$-concavity given by \eqref{3.10}. This has proved \eqref{1.6}.
\end{proof}

\bigskip

\begin{proof}[\bf Proof of Theorem \ref{Thm4}]
	If $\Phi$ is a general path, by \eqref{1.4}, \eqref{Long SPS} and Theorem \ref{Thm1} (4), we obtain
	\begin{align*}
		\hat{\mu}_{L}(\Phi)&=\mu_{L}(\Phi' \# \Phi)-\mu_{L}(\Phi') \\
		&=\mu(\Phi' \# \Phi)-r(\Phi(1))-l(\Phi(1))-(\mu(\Phi')-r(\Phi(0))-l(\Phi(0))) \\
		&=\mu(\Phi' \# \Phi)-\mu(\Phi')+r(\Phi(0))-r(\Phi(1))+l(\Phi(0))-l(\Phi(1)) \\
		&=\mu(\Phi)+r(\Phi(0))-r(\Phi(1))+l(\Phi(0))-l(\Phi(1)),
	\end{align*}
	where $\Phi'$ is the symplectic path which starts at $I$ and ends at $\Phi(0)$.
	By \eqref{1.6} and  Theorem \ref{Thm1} (4), we can obtain that
	\begin{align*}
		\hat{i}_{L_0}(\Phi)&=i_{L_0}(\Phi'' \# \Phi)-i_{L_0}(\Phi'') \\
		&=\mu(\Phi'' \# \Phi)-r(\Phi(1))-l(\Phi(1))-c(\Phi(1))\\
		& \ \ \ -(\mu(\Phi'')-r(\Phi(0))-l(\Phi(0))-c(\Phi(0))) \\
		&=\mu(\Phi'' \# \Phi)-\mu(\Phi'')\\
		& \ \ \ +r(\Phi(0))-r(\Phi(1))+l(\Phi(0))-l(\Phi(1))+c(\Phi(0))-c(\Phi(1)) \\
		&=\mu(\Phi)+r(\Phi(0))-r(\Phi(1))+l(\Phi(0))-l(\Phi(1))+c(\Phi(0))-c(\Phi(1)),
	\end{align*}
	where $\Phi''$ is the symplectic path which starts at $I$ and ends at $\Phi(0)$. This completes the proof of Theorem \ref{Thm4}.
\end{proof}

\bigskip

\begin{proof}[\bf Proof of Theorem \ref{Thm5}]
	The next we prove Theorem \ref{Thm5}. Let $f(t)=(L_1,L_2(t))=(L_1, \Phi(t) L_1)$ be a pair of Lagrangian paths and $L_1=\mathbb{R}^{n} \times \{0\}$, by Proposition \ref{sp to osp},
	then there exists a orthogonal symplectic path $O$ such that $L_2(t)$ crosses $\Sigma(L_1)$  if and only $O(t)$ crosses $\mathscr{C}(2n,\mathbb{R})$ (see \eqref{4.5}).
	We will compute the index of $f(t)$ by a proper path $\bar{f}(t)=f_1 \# f \# f_2 (t)$ (see \eqref{bar f}). Set $d=\dim(L_1(0) \cap L_2(0))=0$. If $e^{-\theta J_n}L_2$ crosses $\Sigma(L_1)$ transversely at all intersection time, then $\mu_{CLM}(\bar{f})= p-q$ by the review of Section \ref{subsec-CLM}. It follows from \eqref{3.12} that we know
	\begin{align*}
		\mu_{CLM}(f)= d+p-q.
	\end{align*}
	Let $\bar{O}$, $O_1$ and $O_2$ denote the corresponding orthogonal symplectic path of $\bar{f}$, $f_1$ and $f_2$. For an intersection time $t_j, \bar{O}(t_j) \in \mathscr{C}(2n,\mathbb{R})$. $e^{-\theta J_n}L_2(t)$ crosses $\Sigma(L_1)$ transversely  if and only $\bar{O}(t)$ crosses $\mathscr{C}(2n,\mathbb{R})$. If $\delta>0$ is small enough, by Theorem \ref{Thm1} (4), we have
	\begin{align*}
		\mu(\bar{O})=\mu(\bar{O}([0,t_j-\delta]))+\mu(\bar{O}([t_j-\delta,t_j+\delta]))+\mu(\bar{O}([t_j+\delta,1])).
	\end{align*}
	If one of first kind eigenvalues $\lambda(t)$ of $\bar{O}(t)$ crosses $\pm{1}$ in $(t_j-\delta, t_j-\delta)$ anti-clockwise, then the contribution to $\mu(\bar{O}([t_j-\delta, t_j+\delta]))$ is $+1$. It is equal to $-1$ if $\lambda(t)$ crosses in the opposite direction. Thus, the first kind eigenvalue $\lambda(t)$ crossing $\pm{1}$ in $(t_j-\delta, t_j-\delta)$ will contribute $\pm{1}$ to $\mu(\bar{O})$. The number of all contribution are equal to $p+q$ but $p$ intersection points contribute $p$ and $q$ intersection points contribute $-q$. Then we have $\mu_{CLM}(\bar{f})=\mu(\bar{O})$ and hence
	\begin{align*}
		\mu_{CLM}(f)=d+ \mu(\bar{O})=d+ \mu(O_1)+ \mu(O) + \mu(O_2).
	\end{align*}
By Definition \eqref{Def Maslov.type.index}, we have $\mu(O_1)=-d$ and $\mu(O_2)=0$, thus $\mu_{CLM}(f)=\mu(O)$.
\end{proof}

\smallskip

\begin{proof}[\bf Proof of Theorem \ref{Thm6}]
	Finally, we prove Theorem \ref{Thm6}. The relationship of $\mu$ and $\mu_{RS}$ is so complicated that we only discuss the special symplectic paths. If $\Phi \in \mathcal{P}(2n,\mathbb{R})$ is a diagonal path of the form as
	\begin{align*}
		\Phi(z_1, z_2, \cdots, z_n)=(\Phi_1z_1, \Phi_2z_2, \cdots, \Phi_n z_n),
	\end{align*}
	where $\Phi_j \in \mathcal{P}(2,\mathbb{R}),\ j=1,2,\cdots, n$. By Theorem \ref{Thm1} (5), we have
	\begin{align*}
		\mu(\Phi)=\sum_{j=1}^{n}\mu(\Phi_j).
	\end{align*}
	Since $\mu_{RS}$ also satisfies the property of the product above, then we only need to consider $\Phi_j$. Set $U=\{0\} \times \mathbb{R}$, if $\lambda_j(t)$ crosses $\pm{1}$ along $\mathbb{R}$, then $\Phi_jU$  dose not cross $\Sigma(U)=U$ transversely. We only need to consider the first kind eigenvalue of $\Phi_j$ on $ \mathbb{S}^{1}$, denoted by $\lambda_j(t) = e^{i\theta_j(t)}$. Choose the Lagrangian frame (see \eqref{3.13}) as
	\begin{align*}
		Z_j(t)=	\begin{pmatrix}
			\cos\theta_j(t) \\
			\sin\theta_j(t)
		\end{pmatrix} ,\
	\end{align*}
	then the crossing form (see \eqref{3.15}) is
	\begin{align*}
		\Gamma(\Phi_jU,U,t)(v)=\langle \cos\theta_j(t)u, [\sin\theta_j(t)]{'}u \rangle- \langle \sin\theta_j(t)u, [\cos\theta_j(t)]{'}u \rangle = \theta{'}_j(t)u^{2} ,
	\end{align*}
	where $v=(0,u) \in \Phi_j(t)U \cap U$. When $t \not=0,1$, then $\Gamma(\Phi_jU,U,t)$ contributes $+1$ to $\mu_{RS}(\Phi)$ if $\theta{'}_j(t)>0$ at the intersection time and this also means that $\lambda(t)$ of $\Phi(t)$ crosses $\pm{1}$ anti-clockwise. The circumstance of the case $\theta{'}_j(t)<0$ is similar. Then $\mu(\Phi)$ and $\mu_{RS}(\Phi)$ is equal without considering the end points. When $t=0,1$, there are four cases about the end point of $\Phi_j$ as
	\begin{align*}
		&(1)\ \theta{'}_j(0)>0,\ \text{the contribution to} \ (\mu(\Phi), \mu_{RS}(\Phi)) \ \text{is} \ (1,\frac{1}{2}), \\
		&(2)\  \theta{'}_j(0)<0,\ \text{the contribution to} \ (\mu(\Phi), \mu_{RS}(\Phi)) \ \text{is} \ (-\frac{1}{2},0), \\
		&(3)\  \theta{'}_j(1)>0,\ \text{the contribution to} \ (\mu(\Phi), \mu_{RS}(\Phi)) \ \text{is} \ (\frac{1}{2},0), \\
		&(4)\  \theta{'}_j(1)<0,\ \text{the contribution to} \ (\mu(\Phi), \mu_{RS}(\Phi)) \ \text{is} \ (-\frac{1}{2},-1).
	\end{align*}
	Thus, if $\theta{'}_j(t) \not=0$, then the contribution to $\mu(\Phi)$ is $\displaystyle \frac{1}{2}$
	more than $\mu_{RS}(\Phi)$ when $t=0$ and  $\displaystyle \frac{1}{2}$
	less than $\mu_{RS}(\Phi)$ when $t=1$. Define
	\begin{align}\label{5.8}
		s(t)= \sum_{j=1}^{n} | Sign \Gamma(\Phi_j U,U,t)|,
	\end{align}
	which is the number of crossing forms in $\{\Gamma(\Phi_j U,U,t),\ j=1, \cdots, n\}$ that are non-degenerate at the crossing time $t$.
	Then we have
	\begin{align*}
		\mu_{RS}(\Phi)=\mu(\Phi)+\frac{1}{2}(s(0)-s(1)).
	\end{align*}
	Suppose $\Phi{'} \in \mathcal{P}(2n,\mathbb{R})$ is smooth and there exists a symplectic path $T$ such that $T^{-1}\Phi{'}T=\Phi$, we can obtain $\mu(\Phi)=\mu(\Phi{'})$ obviously by \eqref{2.11}. Since $\mu{'}_{RS}$ is symplectic invariance, by \eqref{3.16}, \eqref{3.18} and \eqref{3.19}, then
	\begin{align*}
		\mu{'}_{RS}(\Phi{'}(TL),TL) &=\mu{'}_{RS}(T\Phi L,TL) =\mu{'}_{RS}(\Phi L,L) =\mu_{RS}(\Phi L,L) =\mu_{RS}(\Phi) \\
		&=\mu(\Phi)+\frac{1}{2}(s(0)-s(1)) =\mu(\Phi{'})+\frac{1}{2}(s(0)-s(1)),
	\end{align*}
	where $L=\{0\} \times \mathbb{R}^{n}$. This completes the proof of Theorem \ref{Thm6}.
\end{proof}
Finally, we give two concrete examples  to show the relationships of these indices.
\begin{example} \label{Eg2}
	Let
	\begin{align*}
		\Phi(t) = \begin{pmatrix}
			\cos\frac{3\pi t}{2} & -\sin\frac{3\pi t}{2} \\
			\sin\frac{3\pi t}{2} & \cos\frac{3\pi t}{2} \\
		\end{pmatrix},\ 0 \leq t \leq 1.
	\end{align*}
	By Example \ref{Eg1}, we have $\mu_{CZ}(\Phi)=1$. By \eqref{4.4}, we consider the perturbation $e^{-\theta J}, 0<\theta <\frac{1}{2}$, then
	\begin{align*}
		\Phi^{\#}_{\theta}(t) = \begin{pmatrix}
			\cos(\frac{3\pi t}{2}-\theta) & -\sin(\frac{3\pi t}{2}-\theta) \\
			\sin(\frac{3\pi t}{2}-\theta) & \cos(\frac{3\pi t}{2}-\theta) \\
		\end{pmatrix},\ 0 \leq t \leq 1.
	\end{align*}
	Since $\rho(\Phi_{\theta}(0))=e^{-i\theta}$ and $\rho(\Phi_{\theta}(1))=e^{i(\frac{3\pi}{2}-\theta)}$, then $Im(\Phi_{\theta}(0))Im(\Phi_{\theta}(1))>0$. By \eqref{4.12}, the end point of the extension of $\Phi_{\theta}$ is $\Phi_{\theta}(0)$. Choose this extension as
	\begin{align*}
		\beta(t) = \begin{pmatrix}
			\cos(\frac{(t+3)\pi}{2}-\theta) & -\sin(\frac{(t+3)\pi}{2}-\theta) \\
			\sin(\frac{(t+3)\pi}{2}-\theta) & \cos(\frac{(t+3)\pi}{2}-\theta) \\
		\end{pmatrix},
	\end{align*}
	then $\mu(\Phi)=\Delta(\Phi)+\Delta(\beta)=\frac{3}{2}+\frac{1}{2}=2$ and hence
	\begin{align*}
		\mu_{CZ}(\Phi)=\mu(\Phi)-1,
	\end{align*}
	where $1=r(\Phi(1))$ (see \eqref{5.7}) because  $Im(e^{\frac{3\pi i}{2}})<0$.  Choose the Lagrangian frame (see \eqref{3.13}) of $\Phi U$ as
$Z(t)=\begin{pmatrix}
			\cos\frac{3\pi t}{2} \\
			\sin\frac{3\pi t}{2}
		\end{pmatrix}$, then the crossing form (see \eqref{3.15}) is
\begin{align*}
\Gamma(\Phi U,U,t)(v)=\langle \cos\frac{3\pi t}{2}u, [\sin\frac{3\pi t}{2}]^{'}u \rangle- \langle \sin\frac{3\pi t}{2}u, [\cos\frac{3\pi t}{2}]^{'}u \rangle =  \frac{3\pi}{2}u^{2},
\end{align*}
	where $v=(0,u) \in \Phi U \cap U$. At the intersection time $t=0$ and $t=\frac{2}{3}, u \not= 0$, then $Sign\Gamma(\Phi U,U,t)=1$ and we have
	\begin{align*}
		\mu_{RS}(\Phi)=\frac{1}{2}Sign\Gamma(\Phi U,U,0)+Sign\Gamma(\Phi U,U,\frac{2}{3})=\frac{3}{2}.
	\end{align*}
	Moreover, we can see $s(0)=|Sign\Gamma(\Phi U,U,0)|=1$ and $s(1)=0$, then
	\begin{align*}
		\mu_{RS}(\Phi)=\mu(\Phi)+\frac{1}{2}(s(0)-s(1)).
	\end{align*}
	Let $\Psi = \Phi_{\theta},\ L_1= \mathbb{R} \times \{0\},\ L_2(t)=\Psi(t) L_1$, then $\dim(L_1(0) \cap L_2(0))=0$. The intersection time of $L_2(t)$ crossing $\Sigma(L_1)=L_1$ is $t=\theta$ and $t=\theta+\frac{1}{2}$, both two crossings are anti-clockwise, by \eqref{3.12} we obtain
	\begin{align*}
		\mu_{CLM}(\Psi)=2=\mu(\Psi).
	\end{align*}
\end{example}
\begin{example}\label{Eg degenerate paths}
	We consider the degenerate path \eqref{1.3}, $i.e.$
	\begin{align*}
		\Phi(t) = \begin{pmatrix}
			1 & 0 \\
			0 & 1 
		\end{pmatrix} \diamond
	\begin{pmatrix}
		1 & -t \\
		0 & 1
	\end{pmatrix}
	,\ 0 \leq t \leq 1.
	\end{align*}
	This path is degenerate and $L_0$-degenerate. For our definition, $\Phi^{\#}(1)=I_4$, and $\Phi^{\#}$ is homotopic to the constant path $I_4$ with fixed endpoints, then $\mu(\Phi)=\mu(\Phi^{\#})=\mu(I_4)=0$ because all eigenvalues of $I_4$ are invariant.
	
	For Long index, $\mu_{L}(\Phi)=\mu_{L}(I_2)+\mu_{L}(\Phi')=\mu_{L}(\Phi')-1$, where
	\begin{align*}
		\Phi'(t) = \begin{pmatrix}
			1 & -t\\
			0 & 1\\
		\end{pmatrix},\ 0 \leq t \leq 1.
	\end{align*}
	the rotational perturbation will change the first kind eigenvalues of $\Phi'(1)$ to $\{\lambda \ | \ \lambda>0 \}$, 
	by Definition \ref{Def L}, we obtain $\mu_{L}(\Phi')=0$ and hence $\mu_{L}=-1$. 
	Here the normal forms are $I_2$ and $\Phi'(1)$, then $l_{1,3}(\Phi(1))=1$ and all the rest are zero. So $l(\Phi(1))=1$. Note that $r(\Phi(1))=0$. 
	Thus,  the equality $\mu_{L}(\Phi)=\mu(\Phi)-r(\Phi(1))-l(\Phi(1))$ holds for the path \eqref{1.3}. 
	
	For $L_0$-index, $i_{L_0}(\Phi)=i_{L_0}(I_2)+i_{L_0}(\Phi')=i_{L_0}(\Phi')-1$, 
	$\Phi'$ is a $L_0$-nondegenerate path, by \eqref{3.7}, we obtain $i_{L_0}(\Phi')=0$, then $i_{L_0}(\Phi)=-1$.
	
	Thus, we see that our index $\mu$ is different from Long index $\mu_{L}$ and $L_0$-index $i_{L_0}$.
\end{example}

\clearpage

\addcontentsline{toc}{section}{\bf References}	

\small


\bigskip

\bigskip

\small
\noindent
Department of Mathematics, Jinan University, Guangzhou 510632, China;
\smallskip

\noindent
School of Mathematics, Sichuan University, Chengdu 610065, China.
\smallskip

\noindent
E-mail addresses: hailongher@jnu.edu.cn;\ \ \   zhongqiyu@stu.scu.edu.cn

\end{document}